\g@addto@macro{\endabstract}{\@setabstract}
\newcommand{\authorfootnotes}{\renewcommand\thefootnote{\@fnsymbol\c@footnote}}%
\DeclareMathOperator*{\argmax}{arg\,max}
\DeclareMathOperator*{\argmin}{arg\,min}
\newcommand{\PreserveBackslash}[1]{\let\temp=\\#1\let\\=\temp}
\newcolumntype{C}[1]{>{\PreserveBackslash\centering}p{#1}}
\newcolumntype{R}[1]{>{\PreserveBackslash\raggedleft}p{#1}}
\newcolumntype{L}[1]{>{\PreserveBackslash\raggedright}p{#1}}
\theoremstyle{definition}
\newtheorem{theorem}{Theorem}
\newtheorem{proposition}{Proposition}
\newtheorem{assumption}{Assumption}
\newtheorem{lemma}{Lemma}
\newtheorem{observation}{Observation}
\newtheorem{definition}{Definition}
\theoremstyle{definition}
\newtheorem{main problem}[theorem]{Main problem}
\theoremstyle{remark}
\begin{document}
\begin{center}
	\LARGE
	Best of Many in Both Worlds: Online Resource Allocation with Predictions
	under Unknown Arrival Model \par \bigskip
	
	\normalsize
	\authorfootnotes
	Lin An, Andrew A. Li, Benjamin Moseley, and Gabriel Visotsky \par 
	Tepper School of Business, Carnegie Mellon University, Pittsburgh, Pennsylvania 15213 \par 
	linan, aali1, moseleyb, gvisotsk@andrew.cmu.edu \par \bigskip
	
	\today
\end{center}

	\begin{abstract}
Online decision-makers often obtain predictions on future variables, such as arrivals, demands, inventories, and so on. These predictions can be generated from simple forecasting algorithms for univariate time-series, all the way to state-of-the-art machine learning models that
leverage multiple time-series and additional feature information. However, the prediction accuracy is unknown to decision-makers a priori, hence blindly following the predictions can be harmful. In this paper, we address this problem by developing algorithms that utilize predictions in a manner that is robust to the unknown prediction accuracy.

We consider the Online Resource Allocation Problem, a generic model for online decision-making, in which a limited amount of resources may be used to satisfy a sequence of arriving requests. Prior work has characterized the best achievable performances when the arrivals are either generated stochastically (i.i.d.) or completely adversarially, and shown that algorithms exist which match these bounds under both arrival models, without ``knowing'' the underlying model. To this backdrop, we introduce predictions in the form of shadow prices on each type of resource.
Prediction accuracy is naturally defined to be the distance between the predictions and the actual shadow prices. 

We tightly characterize, via a formal lower bound, the extent to which any algorithm can optimally leverage predictions (that is, to ``follow'' the predictions when accurate, and ``ignore'' them when inaccurate) without knowing the prediction accuracy or the underlying arrival model. Our main contribution is then an algorithm which achieves this lower bound.
Finally, we empirically validate our algorithm with a large-scale experiment on real data from the retailer {\em H\&M}.
	\\
	\\
	\textit{Key words:} online resource allocation; decision-making with predictions; regret analysis; competitive analysis
\end{abstract}

\newpage
\section{Introduction}

\label{Section intro}
Allocating a limited set of resources to satisfy different requests as they arrive is a key process in many operations problems. For example, airlines need to decide whether or not to accept a certain offer for a seat at a given price, while the total number of seats is limited \citep{talluri2006theory,ball2009toward}; online retailers must choose which products to display to a browsing customer, taking into account inventory levels \citep{gallego2004managing,luce2012individual}; internet search engines auction off impressions to advertisers with limited budgets \citep{edelman2007internet,mehta2007adwords}.
The \textit{Online Resource Allocation Problem} is a generic model for all of these settings. In the problem, requests arrive sequentially, each request consisting of multiple actions to choose from, and each action generating some reward and consuming some subset of resources. Actions are selected online, i.e. without knowing future requests. Resources are limited, and the objective is to maximize the total reward received across all time periods. While the Online Resource Allocation Problem is arguably ubiquitous in practice today, it may be worth highlighting a few motivating examples:

\begin{itemize}
\item {\bf Network Revenue Management:} The canonical example of network revenue management is airlines, for whom the resource to be allocated is the seats on future flights. This problem can be challenging as requests may involve multiple seats (e.g. group bookings, or even individuals flying multiple flight legs on a single itinerary), and can have highly varying prices due to the ever-growing number of fare classes.


\item {\bf Assortment Optimization:} Consider an online retailer. At various moments during a customer's browsing session, the retailer chooses a set of products to display (e.g. when the customer has placed a search query, or in-cart recommendations).
The customer then selects each product with some probability, based on their personal preferences and the assortment itself. In this assortment optimization problem, each opportunity to display an assortment is a request, the reward of an action is the (expected) profit earned by displaying a certain assortment, and the resources are the product inventories. 

\item  {\bf Online Matching (AdWords, Online Auctions):} Online matching is itself a general model formulating various two-sided markets, such as AdWords and online auctions. As a special case of the Online Resource Allocation Problem, the  online nodes (impressions in AdWords) can be viewed as the arrivals, and the capacities of the offline nodes (budgets of bidders) can be viewed as resources. 
\end{itemize}

At present, there are by and large two approaches to the Online Resource Allocation Problem. The traditional approach is to assume a model for the arriving requests, and develop algorithms that have the best worst-case guarantees. The two most popular arrival models are {\em stochastic} and {\em adversarial}, where the former assumes each arrival is drawn independently from an unknown underlying distribution, and the latter assumes nothing about the arrivals -- they can be as bad as possible. 
A result of \cite{balseiro2023best} states that the best possible (worst-case) performance can be achieved simultaneously under both arrival models {\em without knowing the actual arrival model}. This is quite nice -- in practice, if we think of the stochastic and adversarial models as broadly representing stationary and nonstationary processes, respectively, then their algorithm is able to leverage the ability to ``learn on the fly'' in stationary settings, while remaining robust to arbitrary nonstationarities.
Still, the optimality here is with respect to worst-case guarantees, which might be overly pessimistic.

The second, arguably more modern approach, is to utilize some sort of {\em predictions}  on the future arrivals. Here we use the term ``prediction'' in the broadest possible sense, ranging from simple time-series forecasting models, to state-of-the-art machine learning algorithms based on large amounts of data, to human judgement, and even combinations of all of the above. 
The de facto approach in practice is to take these predictions as fact (in a way we will make formal momentarily). 
Naturally, the performance of this approach relies heavily on the accuracy of the predictions, which is not guaranteed: Figure \ref{fig:first-example}, taken from \cite{an2023nonstationary}, shows this for the relatively simple task of forecasting daily visits to two  stores.

\begin{figure}[h]
	\centering
	\includegraphics[width=5in]{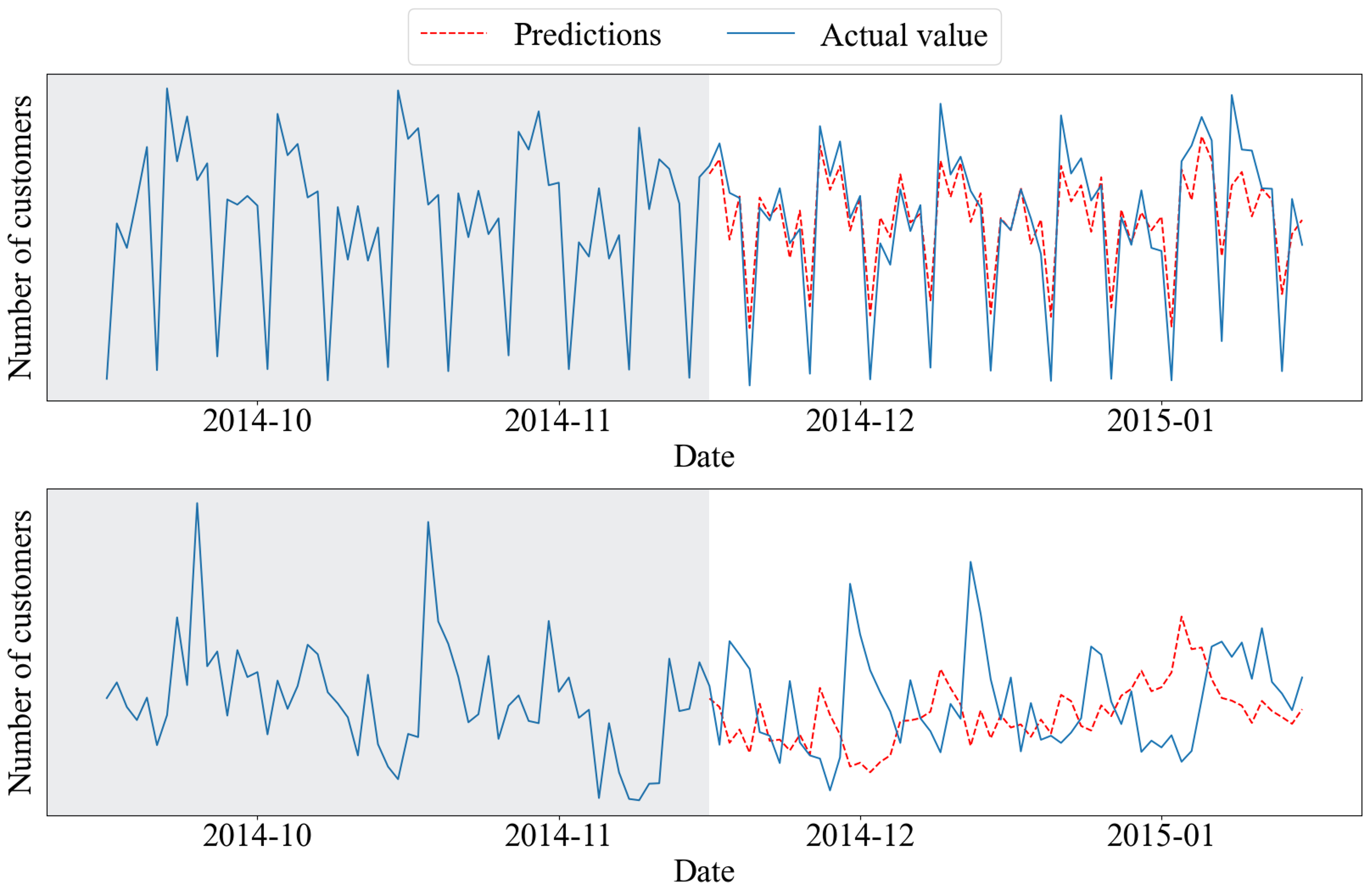}
	\caption{(Figure and caption from \cite{an2023nonstationary}) Daily number of customers (in blue), from September 2014 to January 2015, at two different stores in the Rossmann drug store chain. Predictions (in red), starting November  2014, are generated using Exponential Smoothing with the same fitting process. The store in the upper sub-figure has substantially more accurate predictions ($R^2=0.88$) than that of the lower sub-figure ($R^2=0.11$). } 
	\label{fig:first-example}
\end{figure}

To summarize so far, the Online Resource Allocation Problem admits algorithms with optimal worst-case guarantees (for both stochastic and adversarial arrival models, simultaneously), and these algorithms can be significantly better or worse than following predictions, depending on the prediction quality. This suggests the opportunity to design an algorithm that leverages predictions {\em optimally}, in the sense that the predictions are utilized when accurate, and ignored when inaccurate. Ideally, such an algorithm should operate without knowledge of (a) the accuracy of the predictions and (b) the method with which they are generated. {\em This is precisely what we seek to accomplish in this paper. }

\subsection{Online Resource Allocation with Predictions}
The primary purpose of this paper is to develop an algorithm that optimally incorporates {\em predictions} (defined in the most generic sense possible) into the Online Resource Allocation Problem. 
Without predictions, the {\em Online Resource Allocation Problem} consists of a finite horizon of $T$ time periods and a limited number of $m$ types of resources. At each time period, a decision must be made which will consume a certain set of resources and yield a certain reward. The form of these individual decision problems changes over time and is unknown in advance.

Following \cite{balseiro2023best}, we consider both the stochastic and adversarial arrival models. 
Under the stochastic model, we measure the performance of any algorithm via its {\em regret}, which is the difference in the total reward earned by an optimal offline algorithm (i.e. one that ``knows'' the entire arrival sequence beforehand) versus the reward earned by the algorithm. At minimum we aim to design algorithms that achieve {\em sub-linear} (i.e.~$o(T)$) regret, as such an algorithm would earn a per-period reward that is on average no worse than the optimal, as $T$ grows. Under the adversarial  model, sub-linear regret is impossible to achieve in the worst case, so instead we measure the performance of any algorithm via its {\em competitive ratio}, which is the ratio between the total reward earned by an optimal offline algorithm and the algorithm's reward. In other words, if an algorithm is $\alpha$-competitive, then it can always obtain a total reward that is no less than $1/\alpha$ times the reward of the optimal algorithm. Without predictions, \cite{arlotto2019uniformly} proved that under the stochastic model, any algorithm incurs at least $\Omega(T^{\frac{1}{2}})$ regret. Similarly \cite{balseiro2019learning} proved that under the adversarial model, any algorithm has at least an $\alpha^*$ competitive ratio, where $\alpha^*$ depends on simple problem parameters (it is these two bounds which \cite{balseiro2023best} matches simultaneously). We seek to design algorithms that go beyond these worst-case bounds using accurate predictions, but also enjoy the same guarantees using inaccurate predictions.

To that end, we introduce the notion of {\em predictions}. Our prediction is of the form of an $m$-dimensional vector $\hat{\mu}$ whose coordinates represent a predicted {\em shadow price} for each of the $m$ resources.
We will show that this form of prediction satisfies certain nice properties including that it (a) immediately translates to a decision policy, and (b) there always exists ``perfect'' predictions which achieve near-optimal reward.

We measure the quality of any prediction $\hat{\mu}$ by its $\ell_1$ distance to the closest perfect prediction $\mu^*$\footnote{The choice of the $\ell_1$ norm follows naturally from our analysis, though any $\ell_p$ norm where $p\in[1,2]$ yields similar performance guarantees for our algorithms.}. Specifically, we use an \textit{accuracy parameter} $a \ge 0$, defined as the largest $a$ such that $||\hat{\mu}-\mu^*||_1\in O(T^{-a})$. Notice that when $a=0$ the prediction is effectively useless, and as $a$ increases the prediction becomes more accurate. 
We call this problem \textit{Online Resource Allocation with Predictions}. Our primary challenge will be to design algorithms with performances that are robust in the prediction quality {\em without} having access to $a$.

\subsection{A Simple Example}
\begin{figure}[ht]
	\captionsetup[subfigure]
	{font=scriptsize,labelfont=scriptsize}
	\centering
	\begin{subfigure}{0.2\textwidth}
		\centering
		\includegraphics[width=\linewidth]{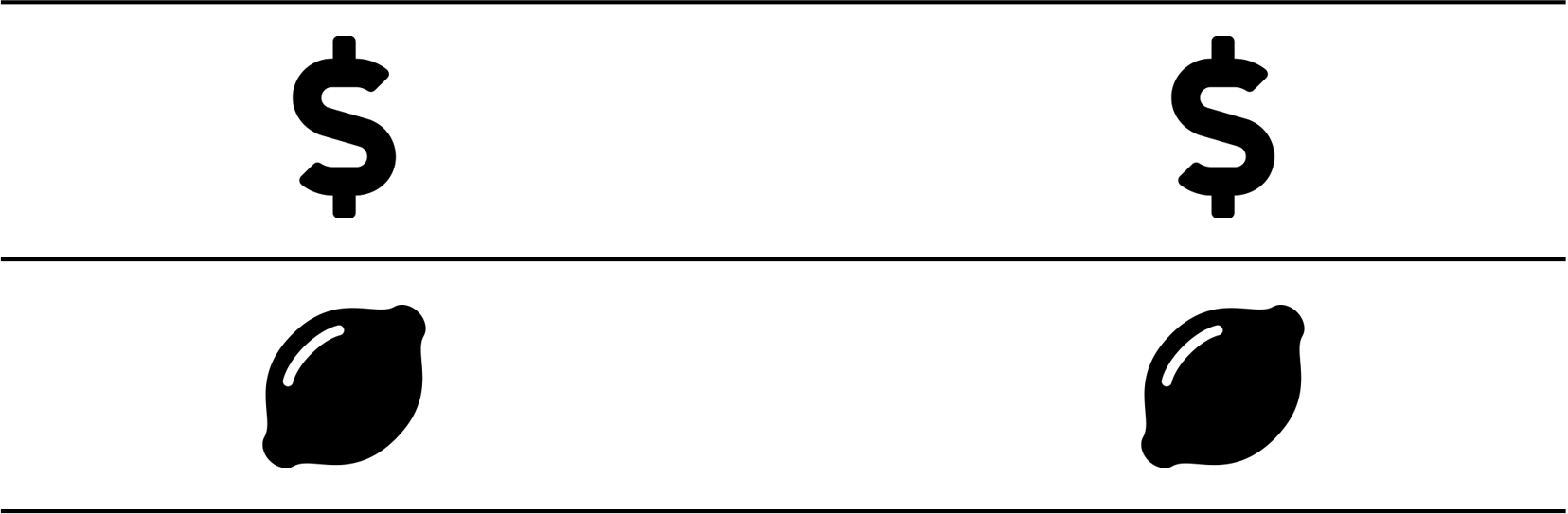}
	\end{subfigure}
	\qquad \qquad \quad
	\begin{subfigure}{0.2\textwidth}
		\centering
		\includegraphics[width=\linewidth]{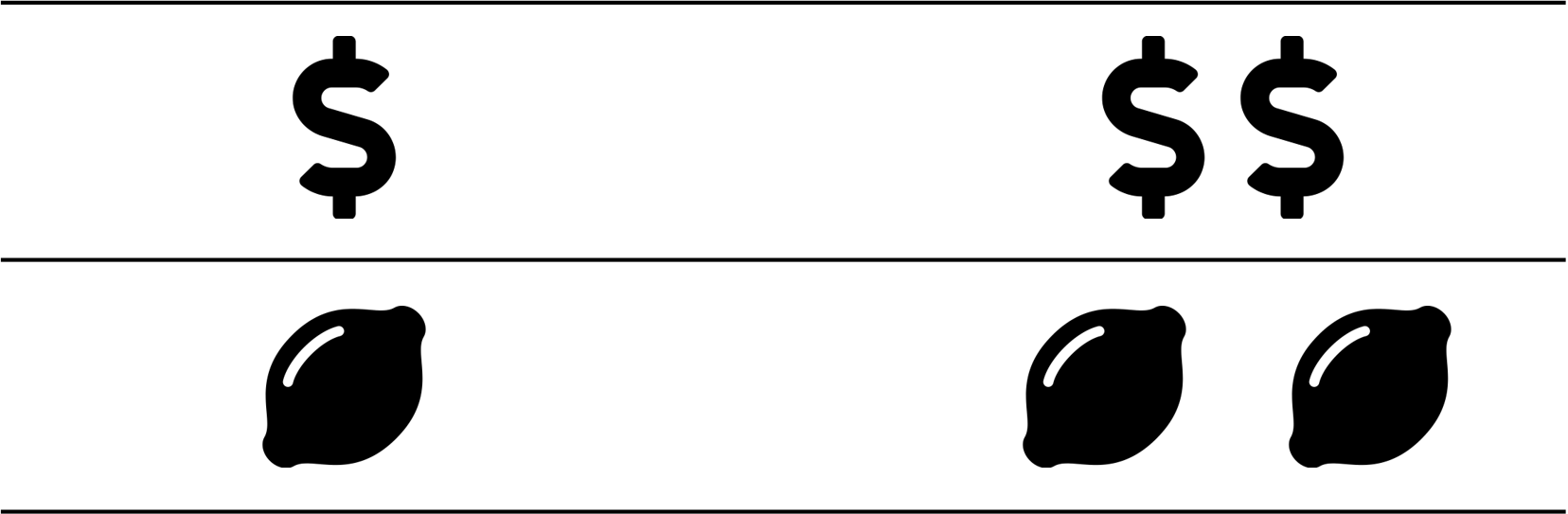}
	\end{subfigure}
	\caption{Two potential arrival sequences for an online resource allocation problem with a single resource (two lemons) and two time periods. The left (right) sequence falls under the stochastic (adversarial) arrival model.}
	\label{fig: Lemon}
\end{figure}
\vspace{-1.5em}

Before outlining our contributions, it is worth describing a simple example to illustrate the challenge we face in incorporating predictions of unknown accuracy. Consider the example in Figure \ref{fig: Lemon}, which depicts two potential arrival sequences for an online resource allocation problem with a single resource (two lemons) and two time periods. In both sequences, a single lemon may be sold for \$1 in the first time period. This same offer occurs in the second time period for the left sequence, but the right sequence offers \$2 for two lemons (this offer may not be split). Note that the left (right) sequence falls under the stochastic (adversarial) arrival model, and critically, an algorithm can not distinguish between the two sequences until the second time period. Still, a simple algorithm (accept all offers when feasible) achieves zero regret under the left sequence, and a competitive ratio of $1/2$ under the right sequence (incidentally, $\alpha^*=2$ for this problem instance).

However, suppose now we introduce a prediction, whose implication is that the first offer should be rejected. Under the right sequence, this constitutes a ``good'' prediction, and so an algorithm ideally would follow this prediction and collect the optimal \$2. Under the left sequence, this constitutes a ``bad'' prediction, and so an algorithm ideally would ignore this prediction but still achieve good regret as the arrival model is stochastic. It is of course impossible to do both of these. 

More generally, there are essentially four ``worlds'' we must consider, depending on whether the arrival model is stochastic or adversarial, and whether the predictions are accurate or inaccurate. This example demonstrates that we can not hope to achieve the best of all four worlds simultaneously.\footnote{The language here is indeed in reference to the ``best of both worlds'' literature, e.g.~\cite{balseiro2023best}.} Instead, we will find that just as the accuracy of the predictions is best characterized continuously between ``perfect'' and ``bad'' (via our accuracy parameter $a$), the arrival model is best characterized continuously along a carefully-defined interpolation between the stochastic and adversarial models.

\subsection{Our Contributions}
Our primary contributions can be summarized as follows.

\noindent
{\color{black}{\bfseries\sffamily 1. A Nonstationary Arrival Model and a Lower Bound:}} We define a parameterized class of arrival models that interpolates between the stochastic and adversarial models. In particular, we define a precise measure of the stationarity of an arrival sequence (Definition \ref{def:stationary}), in terms of two values $\lambda$ and $\delta$, such that $(\lambda,\delta)=(0,0)$ (loosely) corresponds to the stochastic model, and $(\lambda,\delta)=(1,1)$ corresponds to the adversarial model (the two values are in general distinct, and have nice time-series interpretations in terms of trend and seasonality). Notably, this stationarity measure is defined for deterministic arrival sequences, and thus the corresponding (nonstationary) arrival models can be defined without positing a stochastic generative model.

The primary value of this new measure of stationarity is that it tightly characterizes the extent to which we can expect an algorithm to leverage predictions of unknown quality. Specifically, we prove the following lower bound:
\setcounter{proposition}{5}
\begin{proposition}[Lower Bound, Informal]
	For any $0\leq \lambda\leq 1$ and $0<\delta\leq 1$, and any algorithm,
	at least one of the following holds:
	
	\begin{enumerate}
		\item[(1)] Under the stochastic arrival model, the algorithm incurs $\Omega(T)$ worst-case regret;
		\item[(2)] Under the adversarial arrival model, with $(\lambda,\delta)$-stationary arrivals, the algorithm's worst-case reward is at most 
		
		$$(1-\lambda)\max\left\{\frac{1}{\alpha^*}\textup{OPT}, \textup{PRD}\right\}  - \Omega(\delta T).$$ 
	\end{enumerate}
\end{proposition}
Here recall that $\alpha^*$ is the best competitive ratio (without predictions) which we will specify later. $\text{OPT}$ denotes the optimal offline reward. 
Following the actions induced by the predictions also yields a certain amount of reward, which we denote by $\text{PRD}$. 

Now if $(\lambda,\delta)=(1,1)$, i.e. the adversarial model with no restrictions, then Proposition \ref{prop:lower bound of full} implies that we can not simultaneously achieve sub-linear regret under the stochastic arrival model and a meaningful reward under the adversarial model (our lemon example was already evidence of this). However, for smaller values of $\lambda$ and $\delta$, we can hope for sub-linear regret and an adversarial reward close to the best value of $\max\left\{\frac{1}{\alpha^*}\textup{OPT}, \textup{PRD}\right\}$.

\noindent
{\color{black}{\bfseries\sffamily 2. An Optimal Algorithm:}} We construct an algorithm that {\em optimally} leverages predictions, in the sense that it achieves the lower bound of Proposition \ref{prop:lower bound of full}, without knowing the underlying arrival model (stochastic or adversarial) and without knowing the prediction accuracy. In particular, our main theoretical result is the following:

\begin{theorem}[Upper Bound, Informal]
	Given a prediction $\hat{\mu}$ with (unknown) accuracy parameter $a$ and given $0<\delta\leq 1$, there exists an algorithm such that, under mild (and tight) assumptions, both of the following hold: 
	\begin{enumerate}
		\item[(1)] Under the stochastic arrival model, the algorithm incurs $\tilde{O}(T^{\frac{1}{2}-a})\footnote{The $\tilde{O}(\cdot)$ notation hides logarithmic factors. Technically the regret should be $\tilde{O}(\max\{T^{\frac{1}{2}-a},1\})$, since if $a>\frac{1}{2}$ the regret bound should be a constant. For the simplicity of exposition we drop the obvious regret bound of $O(1)$ in the introduction section.}$ worst-case regret;
		\item[(2)]Under the adversarial arrival model, with $(\lambda,\delta)$-stationary arrivals, the algorithm's worst-case reward is at least 
		
		$$(1-\lambda)\max\left\{\frac{1}{\alpha^*}\textup{OPT}, \textup{PRD}\right\}  - O(\delta T).$$ 
	\end{enumerate}
\end{theorem}
\addtocounter{theorem}{-1}

Our theoretical results are summarized in bold in Table \ref{results summary}, with a comparison to the problem with no predictions and the problem with predictions of known accuracy. 

\begin{table}[h!]
	\centering
	\small
	\begin{tabular}{@{}lp{22mm}p{40mm}p{56mm}@{}}
		\toprule
		Arrival Model & \hfil Without & \hfil With Predictions of & \hfil  With Predictions of\\ &  \hfil Predictions & \hfil Known Accuracy & \hfil \text{ } Unknown Accuracy \vspace{0.5em}\\ 
		\hline & \\[-1.5ex]
		\vspace{0.7em}
		Stochastic (regret)& \hfil$O(T^{\frac{1}{2}})$ & \hfil$O(T^{\frac{1}{2}-a})$ & \hfil$\bm{\tilde{O}(T^{\frac{1}{2}-a})}$ \\
		\vspace{0.5em}
		Adversarial (reward) & \hfil$\frac{1}{\alpha^*}\text{OPT}$ & \hfil$\max\{\frac{1}{\alpha^*}\text{OPT},\text{PRD}\}$ & \hfil$\bm{(1-\lambda)\max\{\frac{1}{\alpha^*}\textbf{OPT},\textbf{PRD}\}-\delta T}$
		\\
		\bottomrule
	\end{tabular}
	\vspace{0.5em}
	\caption{Summary of our main theoretical results (in bold). Each entry has a corresponding algorithm that, without knowing the underlying arrival model, achieves the stated performance simultaneously for both stochastic arrivals and adversarial arrivals. Each entry also has a matching lower bound.}
	\label{results summary}
\end{table}

\noindent
{\bfseries\sffamily 3. A Large-Scale Experiment:} 
We demonstrate the practical value of our model (namely Online Resource Allocation with Predictions) and our algorithm via empirical results on an H\&M (Hennes \& Mauritz AB) dataset, which contains two years of transactions for 105,542 products. The experiment we conducted corresponds to the assortment problem we motivated above. For each experiment, which runs for three simulated months, we applied our algorithm and compared its performance against the two most-natural baseline algorithms: the optimal algorithm \textbf{without predictions,} and the simple policy which always utilizes the predictions (these correspond to the two ``existing approaches'' described previously). On any given experimental instance, the maximum (minimum) of the rewards gained by these two baselines can be viewed as the best (worst) we can hope for. Thus we measure performance in terms of the proportion of the gap between these two rewards gained by our algorithm, so if this ``optimality gap'' is close to 1, our algorithm performs almost as well as the better one of the two baselines. 

We used three popular forecasting algorithms to generate predictions of various quality. We find that with Prophet forecasts, the average optimality gap is 0.68; with ARIMA forecasts, the average optimality gap is 0.58; with Exponential Smoothing forecasts, the average optimality gap is 0.53. This demonstrates that our algorithm performs well,
irrespective of the quality of the predictions.

\vspace{1em}
The remainder of this paper is organized as follows. The current section concludes with a literature review. In Section \ref{Section formulation} we introduce our model of the Online Resource Allocation with Predictions. In Section \ref{Section main results} we present preliminary results of the problem without predictions as well as our main results. We then introduce our algorithms and proofs of main results in Section \ref{Section main algorithm}, which solves the Online Resource Allocation with Predictions under both arrival models without knowing the underlying arrival model. Section \ref{Section experiments} contains our experimental results, and Section \ref{Section conclusion} concludes the paper. 

\subsection{Literature Review}
\textbf{Online Resource Allocation.}
Allocating scarce resources to satisfy requests arriving online has been extensively studied under various models. Related works assuming the arrivals are stochastic (i.i.d. or random order) including \cite{devanur2009adwords,feldman2010online,devanur2011near,agrawal2014dynamic,kesselheim2014primal}, and \cite{gupta2016experts}, where the objective is to achieve sub-linear worst-case regret. Another popular arrival model is the adversarial arrival model, under which is usually impossible to achieve sub-linear worst-case regret. Instead, the objective is to obtain a certain factor of the rewards of the offline optimum, which is called competitive analsis. For example, \cite{mehta2007adwords} and \cite{buchbinder2007online} studied the AdWords problem where they obtained a $(1-1/e)$-fraction of the optimal allocation in hindsight.

Apart from considering different arrival models separately, there has been a recent line of work in developing algorithms that achieve good performance under various arrival models simultaneously without knowing the underlying arrival model. \cite{mirrokni2012simultaneous} considered the AdWords problem and gave an algorithm with the optimal competitive ratio under adversarial arrivals and improved competitive ratios (though not asymptotic optimality) under stochastic arrivals. \cite{balseiro2023best} studied the Online Resource Allocation Problem and provided a mirror descent algorithm that achieves the optimal worst-case regret under stochastic arrivals and the optimal competitive ratio under adversarial arrivals. The main algorithm in our paper also attains the optimal performance under both stochastic and adversarial arrivals.

\vspace{1em}
\noindent\textbf{Algorithms with Predictions.} With the ubiquity of large data-sets and machine-learning models, theory and practice of augmenting online algorithms with machine-learned predictions have been emerging. This framework has lead to new models of algorithm analysis for
going beyond worst-case analysis. Some applications on  optimization problems including revenue optimization \citep{munoz2017revenue, balseiro2022single,golrezaei2023online}, caching \citep{lykouris2021competitive,rohatgi2020near}, online matching \citep{lavastida2021using,jin2022online}, online scheduling \citep{purohit2018improving,lattanzi2020online}, the secretary problem \citep{antoniadis2020secretary,dutting2021secretaries,dutting2023prophet}, and the nonstationary newsvendor problem \citep{an2023nonstationary}. Most of the related works analyzed the algorithms' performances using competitive analysis and obtain optimal \textit{consistency-robustness (consistency-competitiveness)} trade-off, where \textit{consistency} is the competitive ratio of the algorithm when the prediction is accurate, and \textit{robustness (competitiveness)} is the competitive ratio of the algorithm regardless the prediction's accuracy. In contrast, under the stochastic arrivals we do regret analysis on our algorithm and prove our algorithm has near-optimal worst-case regret without knowing the prediction quality. Other papers that do regret analysis under the prediction model include \cite{munoz2017revenue} (revenue optimization in auctions), \cite{an2023nonstationary} (nonstationary newsvendor), and \cite{hu2024constrained} (constrained online two-stage stochastic optimization). 

Finally, the closest works to our own are \cite{balseiro2022single} and \cite{golrezaei2023online}, both of which are limited in the following two ways. First, the ``base'' problems they analyze (i.e.~without predictions) are strict special cases of the Online Resource Allocation Problem we study. Second, they treat prediction quality as binary: predictions are either entirely accurate or entirely inaccurate. Under this assumption, they successfully designed algorithms that achieve the optimal consistency-robustness (consistency-competitiveness) tradeoff. On the other hand, as stated earlier, we will {\em quantify} prediction quality, and provide tight guarantees for predictions of any quality.

\section{Model: The Online Resource Allocation with Predictions}\label{Section formulation}
In this section, we first formally define the \textbf{Online Resource Allocation with Predictions} problem, and then describe two standard arrival models (stochastic and adversarial) as well as their respective performance metrics.

\subsection{Problem Formulation} 
\noindent
{\bfseries\sffamily Online Resource Allocation:}
Consider a problem over $T$ time periods labeled $t=1,\dots, T$. Assume there are $m$ different types of resources. The total number of resources available is denoted by  $\rho T$, where $\rho\in\mathbb{R}^m_+$ is a non-negative $m$-dimensional vector. 
At each time period $t$, the decision-maker receives an {\em arrival} $\gamma_t=(r_t,g_t,\mathcal{X}_t)\in \mathcal{S}$. Here, $r_t:\mathcal{X}_t\to\mathbb{R}_+$ is a non-negative reward function, $g_t:\mathcal{X}_t\to\mathbb{R}^m_+$ is a non-negative resource consumption function, $\mathcal{X}_t\subset \mathbb{R}^d_+$ is a compact action space, and $\mathcal{S}$ denotes the set of all possible arrivals.\footnote{We assume throughout the paper that the reward functions and the resource consumption functions are deterministic for any given action, but our algorithms also apply when the rewards and/or consumed resources are random.} Note that we impose no convexity assumptions: $r_t(\cdot)$ can be non-concave, $g_t(\cdot)$ can be non-convex, and $\mathcal{X}_t$ can be non-convex or discrete. At each arrival $\gamma_t$, without knowing any of the future arrivals, an action $x_t\in\mathcal{X}_t$ must be selected, which yields $r_t(x_t)$ reward and consumes $g_t(x_t)$ resources. The objective is to maximize the total reward subject to the resource constraint. Finally, we assume that $\mathcal{X}_t$ always contains a $0$ vector representing a ``void'' action that consumes no resources and yields no rewards: $r(0)=0$ and $g_t(0)=0$. This ensures that there is always a feasible action available. 
This is the problem we will refer to as {\bf Online Resource Allocation (without predictions)}. 

We introduce some notations that will appear in our results later on (though our algorithms will not depend on these parameters). We denote by $\underline{\rho}=\min_{j\in[m]}\rho_j>0$ the lowest resource parameter and $\bar{\rho}=\max_{j\in[m]}\rho_j=||\rho||_\infty$ the highest resource parameter. Similarly, let $\bar{r} \ge 0$ be a constant which satisfies $\max_{x \in \mathcal{X}}r(x) \le \bar{r}$ for every $(r,g,\mathcal{X}) \in S$, and let $\bar{g} \ge \underline{g}>0$ be constants satisfying $\underline{g}\leq||g(x)||_\infty\leq \bar{g}$ for every $x\in\mathcal{X}$ and $x\neq 0$.

\vspace{1em}
\noindent
{\bfseries\sffamily Primal and Dual:} For any arrival sequence $\gamma=(\gamma_1,\dots,\gamma_T)$, we use $\text{OPT}(\gamma)$ to denote the \textit{offline/hindsight optimum}, which is the reward of the optimal solution when $\gamma$ is known in advance: 
\begin{equation}\label{eqn:offline opt}
	\text{OPT}(\gamma):=\max_{x_t\in\mathcal{X}_t} \sum_{t=1}^T r_t(x_t) \quad\text{ s.t. } \quad \sum_{t=1}^T g_t(x_t)\leq \rho T.\end{equation} 
As we will describe momentarily, it will be natural to consider predictions in terms of the dual space, so the Lagrangian dual problem of \cref{eqn:offline opt} plays a key role. 
Let $\mu\in\mathbb{R}^m_+$ be the vector of dual variables, where each $\mu_j$ can be thought of as the shadow price of resource $j$. We define
\begin{equation}\label{eqn:conjugate}
	r^*_t(\mu):=\sup_{x\in\mathcal{X}_t}\{r_t(x)-\mu^\top g_t(x)\}
\end{equation}
as the optimal opportunity-cost-adjusted reward of request $\gamma_t$, where the opportunity cost is calculated according to the shadow prices $\mu$.\footnote{We will assume that the primal optimization problems in \cref{eqn:conjugate} admit an optimal solution. This is to simplify the exposition -- our results still holds if we have an approximate of the optimal solution (see \cite{balseiro2023best}).
} Note that $r_t^*(\mu)$ is a generalization of the convex conjugate of $r_t(x)$ that takes the resource consumption function $g_t(x)$ and the action space $\mathcal{X}_t$ into account. In particular, when $g_t(x)=x$ and $\mathcal{X}_t$ is the whole space, $r_t^*(\mu)$ becomes the standard convex conjugate. For fixed arrivals $\gamma$, we define the Lagrangian dual function $D(\mu\mid\gamma):\mathbb{R}_m^+\to \mathbb{R}$ to be
\begin{equation}\label{eqn:dual}
	D(\mu\mid\gamma):=\sum_{t=1}^T r_t^*(\mu)+\mu^\top \rho T.
\end{equation} This allows us to move the constraints of \cref{eqn:offline opt} to the objective, which is easier to work with. We equip the primal space of the resource constraints $\mathbb{R}^m$ with the $\ell_\infty$ norm $||\cdot||_\infty$, and the Lagrangian dual space with the $\ell_1$ norm $||\cdot||_1$. Such choices of norms come naturally from our analysis. Similar performance guarantees of our algorithms with the dependence on the number of resources\footnote{The number of resources $m$ is viewed as a constant throughout the paper.} can be obtained using the $\ell_p$ norm for the primal space and the $\ell_q$ norm for the primal space with $1/p+1/q=1$ and $p\in[2,\infty]$. 

\vspace{1em}
\noindent
{\bfseries\sffamily Predictions:} So far, we have presented the problem of Online Resource Allocation  without predictions. As described in the introduction, it is often the case that when this problem is faced in practice, some notion of a ``prediction'' can be made which might guide us in selecting actions. Such predictions can come from a diverse set of sources ranging from simple human judgement, to forecasting algorithms built on previous demand data, to more-sophisticated machine learning algorithms trained on feature information. The process of sourcing or constructing such predictions is orthogonal to our work. Instead, we treat these predictions as  given to us endogenously (and in particular, we make no assumption on the accuracy of these predictions), and attempt to use these predictions optimally.

Notice that from \cref{eqn:conjugate}, at each time $t$, given a dual variable $\mu \in \mathbb{R}_+^m$ there is a natural action to take, namely the action 
$$x^\mu_t\in\argmax_{ x\in \mathcal{X}_t,\sum_{s=1}^{t-1} g_s(x_s)+g_t(x)\leq \rho T}\left\{r_t(x)-\mu^{\top} g_t(x)\right\}.\footnote{We again assume this optimization problem and other similar-style optimization problems in this paper admit an optimal solution to simplify the exposition. When the right hand side contains more than one action, we naturally choose the action that has the highest reward.}$$ In words, $x^\mu_t$ is the ``greedy'' action that, subject to the resource constraint, maximizes the opportunity-cost-adjusted reward according to the shadow prices $\mu$. Therefore each dual variable $\mu$ essentially corresponds to an algorithm, which is simply taking the ``greedy'' action $x^\mu_t$ at each time period. Below we formally define this algorithm for any dual variable $\mu$, which we call the {\em Dual-Adjusted Greedy Algorithm} ($\textup{GRD}_{\mu}$): 
\begin{algorithm} 
	\small
	\caption{Dual-Adjusted Greedy Algorithm $\textup{GRD}_{\mu}$}
	{\bf Inputs:} Dual variable $\mu$, total time periods $T$, initial resources $G_1=\rho T$\;
	\For{$t = 1,\ldots,T$ }{
		Receive request $\left(r_t, g_t, \mathcal{X}_t\right)$\;
		Make the primal decision $x_t$ and update the remaining resources $G_t$:\\
		\centerline{$x_t\in\arg \max _{ x\in \mathcal{X}_t,g_t(x)\leq G_t}\left\{r_t(x)-\mu^{\top} g_t(x)\right\}$;}
		\centerline{$G_{t+1}\gets G_t-g_t\left(x_t\right).$}}
	
	\label{alg:prediction}
\end{algorithm}

Let $R(\textup{GRD}_{\mu}\mid\gamma)$ denote the reward obtained by $\textup{GRD}_{\mu}$ with arrival sequence $\gamma$ and dual variable $\mu$.\footnote{Later we will formally extend the notion of $R(\textup{ALG}\mid\gamma)$ to any algorithm ALG.} We say a dual variable $\mu^*$ is a \textit{``perfect'' dual variable} (of some arrival sequence $\gamma$) if $\textup{GRD}_{\mu^*}$ yields rewards that is at most a constant away from $\text{OPT}$ (hence essentially optimal). It can be shown that there always exists a ``perfect'' dual variable:
\setcounter{proposition}{0}
\begin{proposition}[``Perfect'' Dual Variable]\label{corollary:perfect dual}
	For any arrival sequence $\gamma$,
	$$\max_{\mu\in\mathbb{R}_+^m}R(\textup{GRD}_{\mu}\mid\gamma)+(\bar{g}/\underline{g}+1)(m+1)\bar{r}\geq \textup{OPT}(\gamma).$$
\end{proposition} 
The proof of \cref{corollary:perfect dual} appears in Appendix \ref{Appendix A}, and utilizes the Shapley-Folkman Theorem. In words, \cref{corollary:perfect dual} shows that there exists a dual variable $\mu^*$ that is essentially optimal to follow. 

With the understanding of the key role that dual variables play in our problem, we formally introduce the notion of predictions. Because dual variables induce actions, they are natural quantities to predict. We assume that before the first time period, the decision-maker receives a {\bf prediction} $\hat{\mu}\in\mathbb{R}^m_+$ of the dual variable $\mu$. 
We measure the \textbf{prediction error} of $\hat{\mu}$ by $||\hat{\mu}-\mu^*||_1$, its $\ell_1$ distance from $\mu^*$. \footnote{In the case that multiple perfect dual variables exist, we take $\mu^*$ to be the perfect dual variable that is closest to $\hat{\mu}$.} 
We quantify the prediction error using the {\bf accuracy parameter}, which is the smallest $a \in [0,\infty]$ such that 
\begin{equation*}
	||\hat{\mu}-\mu^*||_1\leq\kappa T^{-a}.
\end{equation*}
Here $\kappa>0$ is a scaling constant that we can choose, and any $\kappa$ that ensures $a \in [0,\infty]$ can be chosen without affecting our performance bound asymptotically. \footnote{As a technical aside, there is a natural choice of $\kappa$: Proposition 2 in \cite{balseiro2023best} implies that it is enough to only consider dual variables that lie in the $m$-dimensional rectangle $[0,\mu^{\max}_1]\times\cdots\times[0,\mu^{\max}_m]\in\mathbb{R}_+^m$ where $\mu^{\max}_j=\bar{r}/\rho_j+1$. Therefore we may assume without loss of generality that the prediction $\hat{\mu}$ we receive lies inside this rectangle (otherwise we could project $\hat{\mu}$ onto this rectangle). Thus setting $\kappa=||\mu^{\max}||_1$ ensures $a\in[0,\infty]$.} The two extreme cases of the prediction error are (1) $a=0$, in which case $\hat{\mu}$ is almost a constant away from $\mu^*$, so the prediction is effectively useless; and (2) $a=\infty$, in which case $\hat{\mu}=\mu^*$, so the prediction is perfect. We will always assume that $a$ is unknown to the decision-maker. 

In reality, a prediction $\hat{\mu}$ is unlikely to be completely useless. We make the following technical assumption on the prediction quality:

\begin{assumption}[Non-trivial Prediction]\label{assumption: Adversarial Arrival Model(a)}
	There exists a (known) function $\epsilon(T)= o(1)$ such that $||\hat{\mu}-\mu^*||_1=o(\epsilon(T))$.
	
\end{assumption}

Note that \cref{assumption: Adversarial Arrival Model(a)} does not eliminate the case $a=0$. In practice $\epsilon(T)$ can be chosen to be a function close to 1 without hurting the algorithm's performance guarantee. 

\subsection{Arrival Models and Performance Metrics}

An online algorithm $\text{ALG}$, at each time period $t$, takes an action $x_t$ (potentially randomized, but deterministic here to save on notation) based on the prediction $\hat{\mu}$, the current request $\left(r_t, g_t, \mathcal{X}_t\right)$ and the previous history $\mathcal{H}_{t-1}:=\left\{r_s, g_s, \mathcal{X}_s, x_s\right\}_{s=1}^{t-1}$, i.e., $x_t=\text{ALG}\left(r_t, g_t, \mathcal{X}_t \mid \hat{\mu}, \mathcal{H}_{t-1}\right)$. We denote the reward received by an algorithm on an arrival sequence $\gamma$ as
$$
R(\text{ALG} \mid \gamma)=\sum_{t=1}^T r_t\left(x_t\right).
$$

	

This notation is in compliant with the notation $R(\textup{GRD}_{\mu}\mid\gamma)$ which we defined for the Dual-Adjusted Greedy Algorithm. For the prediction $\hat{\mu}$, we use the \textit{Prediction Algorithm} to represent the special case of the Dual-Adjusted Greedy Algorithm where the dual variable is $\hat{\mu}$, and we let $\textup{PRD}(\gamma)=R(\textup{GRD}_{\hat{\mu}}\mid\gamma)$. As stated in \cref{corollary:perfect dual}, if $a=\infty$, i.e., if $\hat{\mu}=\mu^*$, then $\text{PRD}(\gamma)+(\bar{g}/\underline{g}+1)(m+1)\overline{r}\geq \textup{OPT}(\gamma)$, which shows the Prediction Algorithm is essentially optimal if we have a perfect prediction.

Note that for any sequence of dual variables $\mu_1,\dots,\mu_T$, following $\mu_t$ at time period $t$ gives a series of actions $x_1,\dots,x_t$. We define the \textit{depletion time} of resource $j$ by following $\mu_1,\dots,\mu_T$ to be the first time period such that the remaining amount of resources $j$ is less than $\underline{g}$, that is, after this time period no actions that consumes resource $j$ is feasible (if this never happens we set the depletion time to be $T$). We will use the depletion time to quantify the behavior of $\hat{\mu}$. Intuitively, dual variables close to $\hat{\mu}$ induce similar actions in most time periods as long as $\hat{\mu}$ is not always on the ``boundary'' of decisions, and hence their depletion time should be similar. We make this idea formal using the following assumption on the depletion time.

\begin{assumption}[Non-degenerate Prediction]\label{assumption: Adversarial Arrival Model(b)}
	There exists a constant $\zeta>0$ that satisfies the following: for any sequence of dual variables $\mu_1,\dots,\mu_T$ where $\mu_t\in\mathbb{R}^m_+$ and $||\hat{\mu}-\mu_t||_1\leq \zeta$ for all $t$, the difference between the depletion time of resource $j$ by following $\mu_1,\dots,\mu_T$ and by following $\hat{\mu}$ is in $o(T)$ for every resource $j$.
\end{assumption}

\cref{assumption: Adversarial Arrival Model(b)} roughly states that, for a sequence of dual variables that is close to the prediction, the action induced by the sequence of dual variables and the action induced by the prediction deplete resources at similar times. This assumption is reasonable and mild for the following reasons: in reality, most actions sets are discrete (such as $\{\text{accept,reject}\}$, $\mathbb{N}$, etc.). Therefore for most $\hat{\mu}$, as long as it is not at the ``boundary'' (which is often a measure-zero set), dual variables close to $\hat{\mu}$ all induce the same action. Moreover, in practice it is also unlikely for the ``boundaries'' at each time period to be the same across a majority of time periods since $r_t(\cdot)$ and $g_t(\cdot)$ vary over time, in which case \cref{assumption: Adversarial Arrival Model(b)} is satisfied with any prediction $\hat{\mu}$. Finally, perturbing each input with some small noise also turns a degenerate prediction into a non-degenerate one.

There are two primary arrival models when studying online problems: the \textit{stochastic (i.i.d) arrival model} and the \textit{adversarial arrival model}, both which we  define formally below. Our goal is to design algorithms that have good performances under both arrival models, and for predictions of different qualities. Additionally, our algorithms should be \textit{oblivious} to the arrival model and the prediction quality, i.e., they should have good performance without knowing the arrival model and the prediction quality.

\vspace{1em}
\noindent
{\bfseries\sffamily Stochastic (i.i.d.) Arrival Model:} The arrivals are drawn independently from an (unknown) underlying probability distribution $\mathcal{P}\in\Delta(\mathcal{S})$, where $\Delta(\mathcal{S})$ is the space of all probability distributions over $\mathcal{S}$. We measure the performance of an algorithm by its \textbf{regret}. Given an underlying arrival distribution $\mathcal{P}$, the regret incurred by an algorithm $\text{ALG}$ under $\mathcal{P}$ is defined as
$\mathbb{E}_{\gamma\sim \mathcal{P}^T}[\text{OPT}(\gamma)-R(\text{ALG} \mid \gamma)]$. We will be concerned with the {\bf worst-case regret} over all distributions in $\Delta(\mathcal{S})$: we define the regret of $\text{ALG}$ to be
$$\text{Regret}(\text{ALG})=\sup_{\mathcal{P}\in\Delta(\mathcal{S})}\mathbb{E}_{\gamma\sim \mathcal{P}^T}[\text{OPT}(\gamma)-R(\text{ALG} \mid \gamma)].$$ Note that if the regret $\text{Regret}(\text{ALG})$ 
is sub-linear in $T$, then algorithm $\text{ALG}$ is essentially optimal on average as $T$ goes to infinity.

\vspace{1em}
\noindent
{\bfseries\sffamily Adversarial Arrival Model:} The arrivals are arbitrary and chosen adversarially. Unlike
the stochastic arrival model, regret here can be shown to grow linearly with $T$ for any algorithm, so it is less meaningful to study the order of regret over $T$. Instead, we use \textbf{competitive ratio} as the performance metric. We say that an algorithm $\text{ALG}$ is asymptotically \bm{$\alpha$}\textbf{-competitive} if $\alpha\geq 1$ is the smallest number such that 
$$\limsup_{T\to\infty}\sup_{\gamma\in\mathcal{S}^T}\left\{\frac{1}{T}\left( \frac{1}{\alpha}\text{OPT}(\gamma) -R(\text{ALG}\mid \gamma)\right)\right\}\leq 0.$$ In words, if an algorithm is asymptotically $\alpha$-competitive, then it can obtain at least $1/\alpha$ fraction of the optimal reward in hindsight as $T$ goes to infinity.\footnote{We assume the arrival sequence $ \gamma=(\gamma_1,\dots,\gamma_T)$ is fixed in advance. Our results still hold if the arrival sequence is chosen by a non-oblivious or adaptive adversary who does not know the internal randomization of the algorithm.} 

\cite{balseiro2019learning} proved that, without predictions, the lowest competitive ratio that any algorithm can achieve is $\alpha^*=\max\{\sup_{(r,g,\mathcal{X})\in\mathcal{S}}\sup_{j\in[m],x\in\mathcal{X}}g(x)_j/\rho_j,1\}$. \cite{balseiro2023best} gave a mirror descent algorithm that achieves this competitive ratio. This is, loosely speaking, the best we might hope to achieve with ``bad'' predictions. On the other hand, we can always obtain $\text{PRD}(\gamma)$ by following the prediction, which may exceed $\text{OPT}(\gamma)/\alpha^*$ with ``good'' predictions (indeed, as we have seen in \cref{corollary:perfect dual}, $\text{PRD}(\gamma)$ can be as large as $\text{OPT}(\gamma)$).

If we knew the prediction quality beforehand, we could obtain the maximum of the two by simply choosing the better approach (this is in fact the best we can hope for). Using this as the benchmark, we will compare an algorithm's reward to this maximum. That is, for an algorithm $\text{ALG}$, we will analyze the following quantity:
$$\limsup_{T\to\infty}\sup_{\gamma\in\mathcal{S}^T}\left\{\frac{1}{T}\left(\max\left\{\frac{1}{\alpha^*}\text{OPT}(\gamma),\text{PRD}(\gamma)\right\} - R(\text{ALG}\mid \gamma)\right)\right\}.$$

\subsection{A Measure of Stationarity} 
Ideally, one would hope to develop an algorithm that achieves the ``best'' performance under both stochastic and adversarial arrivals respectively without knowing the prediction quality and the underlying arrival model. However, we will show in the next section that this is provably not achievable by any algorithm. 

For an arrival sequence $\gamma$, its \textit{stationarity} is closely related to the ``difficulty'' of solving the instance it created. As examples, an arrival sequence generated independently from the same underlying distribution can be considered as completely stationary, an arrival sequence that has certain seasonality/periodicity with small trend (e.g. generated from time series models) is less stationary, and an arrival sequence that is adversarially chosen (e.g. the lower bound instance) is completely nonstationary. Intuitively, an arrival sequence is more stationary if certain parts of the sequence with the same length are ``similar'' to each other. In this subsection we formalize this idea and develop a measure of arrival sequences' stationarity. We then use it to quantify  algorithms' performances.

For a time interval from time periods $s$ to time period $t$, let $\gamma_{s:t}=(\gamma_s,\dots,\gamma_t)$ denote the arrival sequence from time period $s$ to time period $t$. We define the \textit{$\gamma_{s:t}$-subproblem} to be the problem instance where the arrival sequence is $\gamma_{s:t}$ and the total amount of resources is $\rho (t-s+1)$, i.e., scaled down proportionally. In  particular,
the (offline) optimum of the \textit{$\gamma_{s:t}$-subproblem} is: \begin{equation*}
	\text{OPT}(\gamma_{s:t}):=\max_{x_{t'}\in\mathcal{X}_{t'}} \sum_{t'=s}^t r_{t'}(x_{t'}) \quad\text{ s.t. } \quad \sum_{t'=s}^t g_{t'}(x_{t'})\leq \rho (t-s+1).\end{equation*} 
Similarly, we use $R(\textup{GRD}_{\mu}\mid\gamma_{s:t})$ to denote the amount of reward obtained by the Dual-Adjusted Greedy Algorithm with dual variable $\mu$ on the \textit{$\gamma_{s:t}$-subproblem}.

\begin{definition}[Measure of Stationarity]\label{def:stationary}
	Given the total number of available resources $\rho T$, an arrival sequence $\gamma=(\gamma_1,\dots,\gamma_T)$ is \textit{$(\lambda,\delta)$-stationary} for some $0<\delta\leq 1$ and $0\leq \lambda\leq 1$ if for every $\mu\in\mathbb{R}^m_+$: \begin{equation*}
		\min_{k=1,\dots,\lfloor\frac{1}{\delta}\rfloor}(R(\textup{GRD}_{\mu}\mid\gamma_{1:k\delta T})+R(\textup{GRD}_{\mu}\mid\gamma_{k\delta T+1:T}))\geq (1-\lambda)R(\textup{GRD}_{\mu}\mid\gamma).
	\end{equation*}
\end{definition}

Intuitively, $\gamma$ being $(\lambda,\delta)$-stationary (roughly) means when we break $\gamma$ into two subproblems at any time period that is a multiple of $\delta T$, the rewards obtained by these two subproblems sum up to be at least $1-\lambda$ portion of the reward obtained by $\gamma$.

\noindent A few remarks are in order:
\begin{itemize}
	\item $\delta$ measures the number of possible partition time periods that makes the subproblems similar to the original problem. On the extremes, $\delta$ close to 0 means that every subproblem is similar to the original problem, and $\delta=1$ imposes no restrictions on $\gamma$. As examples, if $\gamma$ is generated i.i.d. from some underlying distribution, $\delta$ can be arbitrarily close to 0, and if $\gamma$ is periodic with small period, $\delta$ can be small.
	\item $\lambda$ measures the loss in the partition, which can be viewed as the similarity of the subproblems to the original problem. On the extremes, $\lambda=0$ means $\gamma$ can be partitioned at time periods $k\delta T$ without losing much rewards, and $\lambda=1$ imposes no restrictions on $\gamma$. As an example, $\gamma$ having small ``trend'' (i.e., the infinity-norm of the vector of possible rewards is similar across all time periods) implies small $\lambda$.
	\item Smaller $\delta$ and smaller $\lambda$ both represent more stationarity. Note that there is no single fixed $(\lambda,\delta)$ pair for an arrival sequence, but rather each choice of $\delta$ gives a corresponding $\lambda$, and smaller $\delta$ yields larger $\lambda$, i.e.~the subproblems become less similar as the partition becomes more granular. The role of $\delta$ and $\lambda$ will become clear when we state our main theorem, and we will not need to know the value of $\lambda$ in our algorithm.
	\item Unlike usual stochastic definitions of stationarity, here it is defined for deterministic arrival sequences. We show in the proposition below that if the arrivals are stochastic (i.i.d.), then the arrival sequence is $(\delta,0)$-stationary for any $\delta>0$ with high probability. This shows our definition of stationarity is compatible with stochastic definitions of stationarity.
	\begin{proposition}\label{prop:iid stationary}
		If an arrival sequence $\gamma$ is generated under the stochastic (i.i.d.) arrival model, then $\gamma$ is $(\delta,\lambda)$-stationary for any constants $\delta,\lambda>0$ with probability at least $1-O(T^{-2})$.
	\end{proposition} The proof of Proposition \ref{prop:iid stationary} appears in Appendix \ref{Appendix A}.
	\item Unlike usual definitions of stationarity, our definition is problem- (i.e. resource-) dependent. For example, $\rho=0$ and $\rho$ sufficiently large both imply $\delta$ can be arbitrarily small and $\lambda=0$, since any partition of the arrival sequence gives the same reward. 
\end{itemize}

\section{Main Results}\label{Section main results}
In this section, we first present previous results for the Online Resource Allocation problem {\em without} predictions, and then give our main results on the full problem ({\em with} predictions) along with matching lower bounds.

\subsection{Prior Results: Online Resource Allocation without Predictions}
\cite{balseiro2023best} studied the no-prediction version of our problem and gave a mirror descent algorithm which achieves the ``best'' achievable performance under both arrival models without knowing the underlying arrival model. We discuss their algorithm in detail in Appendix \ref{Appendix A.2.5}. They proved the following performance guarantee for their \textit{Mirror Descent Algorithm} (MDA):

\begin{proposition}[Theorem 1 and Theorem 2 in \cite{balseiro2023best}]\label{prop:balseiro}
	Consider the Mirror Descent Algorithm \textup{(MDA)}.
	It holds that: \begin{itemize}
		\item[(1)] If the arrivals are stochastic, $$\textup{Regret}(\textup{MDA})=O(T^{\frac{1}{2}});$$
		\item[(2)] If the arrivals are adversarial,
		$$\limsup_{T\to\infty}\sup_{\gamma\in\mathcal{S}^T}\left\{\frac{1}{T}\left(\frac{1}{\alpha^*}\textup{OPT}(\gamma) - R(\textup{MDA}\mid \gamma)\right)\right\}\leq 0.$$
	\end{itemize} 
\end{proposition}
\noindent \cref{prop:balseiro} shows that the Mirror Descent Algorithm achieves $O(T^{\frac{1}{2}})$ regret and is $\alpha^*$-competitive, which are both optimal \citep{arlotto2019uniformly, balseiro2019learning}. 

\subsection{Prior Results: Lower Bounds}
As a final step before describing our results, we present previous lower bounds for the full problem with predictions and {\em known} arrival model.

\vspace{1em}
\noindent
{\bfseries\sffamily Stochastic Arrivals:} Without predictions, the best achievable regret by any algorithm is $O(T^{\frac{1}{2}})$ \citep{arlotto2019uniformly}. 
With predictions, \cite{orabona2013dimension} gave the following lower bound on the best achievable regret with known accuracy parameter $a$:

\begin{proposition}[Corollary of Theorem 2 in \cite{orabona2013dimension}]\label{prop:stochastic lower}
	Under stochastic arrival model, given a prediction $\hat{\mu}$ with accuracy parameter $a$, no algorithm can achieve regret better than $O(\max\{T^{\frac{1}{2}-a},1\})$, even if $a$ is known.	
\end{proposition}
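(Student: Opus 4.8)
The plan is to reduce the stochastic problem, restricted to the dual (shadow-price) space, to the online learning lower bound of \cite{orabona2013dimension}, after re-centering the dual space at the prediction $\hat{\mu}$. Under this shift the comparator becomes $u:=\mu^*-\hat{\mu}$, and the accuracy constraint $||\hat{\mu}-\mu^*||_1\le\kappa T^{-a}$ is exactly the statement that the comparator has norm $||u||_1\le\kappa T^{-a}=:D$. Orabona's Theorem 2, in the regime where the comparator scale $D$ is \emph{known}, yields a minimax regret lower bound of order $D\sqrt{T}$ for online linear optimization with unit-bounded subgradients; substituting $D=\kappa T^{-a}$ gives the target rate $\Omega(T^{1/2-a})$. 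The point of the ``even if $a$ is known'' clause is that this known-$D$ bound already holds against algorithms tuned to $D$, which is also why no logarithmic factor appears (in contrast to the parameter-free regime where $D$ is unknown).

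To instantiate this as a genuine Online Resource Allocation instance I would take a single resource with capacity $\rho T$ and a binary action space $\mathcal{X}_t=\{0,1\}$, where accepting consumes one unit and earns a value $v_t$ drawn i.i.d.\ from a two-point distribution on $\{\hat{\mu}-\Delta/2,\hat{\mu}+\Delta/2\}$ with gap $\Delta:=2\kappa T^{-a}$. The gap controls two things at once. First, the optimal shadow price $\mu^*$ must lie in $[\hat{\mu}-\Delta/2,\hat{\mu}+\Delta/2]$ (it equals the value of the marginal accepted item, up to the complementary-slackness boundary), so $||\hat{\mu}-\mu^*||_1\le\Delta/2=\kappa T^{-a}$ and the prediction indeed has accuracy parameter $a$. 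Second, fixing the capacity near-critically, so that roughly all high-value arrivals but few low-value arrivals should be accepted, makes the clairvoyant $\text{OPT}$ accept a set of size $T/2\pm\Theta(\sqrt{T})$ by the central limit theorem, and each acceptance committed on the ``wrong side'' of $\mu^*$ costs $\Theta(\Delta)$. This is precisely the Arlotto--Gurvich--style construction (\cite{arlotto2019uniformly}) specialized to the ball of radius $\kappa T^{-a}$, and it realizes Orabona's comparator-norm bound with $D=\Delta$.

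The main obstacle, and the crux of the argument, is to show rigorously that \emph{every} non-anticipative algorithm suffers the $\Omega(\Delta\sqrt{T})$ loss, and that this is a genuine $\sqrt{T}$-fluctuation effect rather than a weaker learning-theoretic one. The hardness is not about identifying the arrival law, which is fixed and may even be revealed to the algorithm, but about \emph{non-anticipativity}: any online policy must commit capacity before observing the $\Theta(\sqrt{T})$ imbalance in the number of high-value arrivals, and therefore mismatches the optimal acceptance set on $\Theta(\sqrt{T})$ requests in expectation, each costing $\Theta(\Delta)$. In the dual picture this is exactly Orabona's bound realized stochastically: the partial sums of the centered subgradients $\phi_t-\mathbb{E}[\phi_t]$ form a random walk of magnitude $\Theta(\sqrt{T})$, so the best fixed $\mu^*$ in hindsight beats every non-anticipative dual trajectory by $\Omega(D\sqrt{T})$. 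Because the barrier is the unforeseeable future and not an unknown parameter, revealing $a$ (equivalently $\Delta$) in advance does not help. Finally, the floor $\max\{T^{1/2-a},1\}$ for $a>1/2$ is obtained separately from a trivial single-period construction with an irreducible $\Omega(1)$ expected loss from an irreversible early decision, which no prediction about long-run shadow prices can avert.
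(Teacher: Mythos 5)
The paper offers no proof of this proposition at all: it is stated as a bare corollary of Theorem 2 in \cite{orabona2013dimension} and never revisited in the appendices. So there is nothing to match your argument against line by line; what I can say is that your route is different in kind and, in one important respect, more careful than the paper's. A dual-space online-learning lower bound of the form $\Omega(D\sqrt{T})$ with $D=\|\hat{\mu}-\mu^*\|_1$ does not by itself bound the primal regret of an \emph{arbitrary} online allocation policy, since the chain of inequalities relating primal regret to dual regret (e.g.\ Eq.~(\ref{eqn:clubsuit})) runs in only one direction and only for dual-descent-type algorithms. Your decision to instantiate an explicit primal instance --- the degenerate two-point multisecretary with value gap $\Delta=\Theta(T^{-a})$ and critical capacity, in the spirit of \cite{arlotto2019uniformly} --- is exactly what is needed to make the "corollary" legitimate for all algorithms, and your verification that the realization-dependent $\mu^*$ always lies in $[\hat{\mu}-\Delta/2,\hat{\mu}+\Delta/2]$ (so the prediction genuinely has accuracy parameter $a$) is a point the paper's citation silently skips.

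Two steps in your write-up are glossed and deserve attention if this were to be made rigorous. First, the claim that each mismatched unit "costs $\Theta(\Delta)$" hides an asymmetry: a unit of wasted capacity costs $v_l=\hat{\mu}-\Delta/2=\Theta(1)\gg\Delta$, while a unit of capacity misallocated to a low type costs only $\Delta$; the $\Omega(\Delta\sqrt{T})$ bound survives because the minimax-optimal policy must hedge toward over-accepting low types (paying $\Delta$ per unit on the event of a high-type surplus), but this balancing argument --- roughly, $\min_{\ell}\{\frac{1}{2}\ell\Delta+\frac{1}{2}(c\sqrt{T}-\ell)^+v_l\}=\Omega(\Delta\sqrt{T})$ --- needs to be stated, since a naive "each error costs $\Delta$" accounting would also (wrongly) suggest the bound degrades when $v_l$ is large. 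Second, the statement that every non-anticipative policy mismatches on $\Theta(\sqrt{T})$ requests requires the irreversibility argument to be made explicit: the count of accepted low types is a non-decreasing adapted process, low arrivals cannot be recovered after they pass, and the target surplus $(b-N_h)^+$ retains conditional fluctuation $\Theta(\sqrt{T-t})$ at time $t$, so anticoncentration forces a $\Theta(\sqrt{T})$ expected mismatch. Neither gap is a wrong idea --- both are standard in the multisecretary lower-bound literature --- but as written they are assertions rather than proofs, and they are precisely where the content of the proposition lives.
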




\noindent
{\bfseries\sffamily Adversarial Arrivals:} Without predictions, the best achievable reward by any algorithm (taken the worst-case $\gamma$ across all problem instances) is $\frac{1}{\alpha^*}\text{OPT}(\gamma)$ \citep{balseiro2019learning}.
On the other hand, simply following the actions induced by the prediction at each time yields reward $\text{PRD}(\gamma)$. As we have seen in \cref{corollary:perfect dual}, for good predictions $\text{PRD}(\gamma)$ can be as high as $\text{OPT}(\gamma)$. Hence we have the following lower bound under adversarial arrivals:

\begin{proposition}[Corollary of Theorem 3.1 in \cite{balseiro2019learning}]\label{prop:adv lower}
	Under adversarial arrival model, given a prediction  with accuracy parameter $a$, no algorithm can achieve (worst-case $\gamma$ across all problem instances) reward higher than $\max\{\frac{1}{\alpha^*}\textup{OPT}(\gamma),\textup{PRD}(\gamma)\}$, even if $a$ is known.
\end{proposition}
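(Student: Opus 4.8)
The plan is to read the claim as the statement that the pointwise maximum $M(\vec{\gamma}):=\max\{\frac{1}{\alpha^*}\text{OPT}(\vec{\gamma}),R(\text{PRD}\mid\vec{\gamma})\}$ cannot be exceeded by a linear margin uniformly over adversarial sequences consistent with the declared accuracy parameter $a$. Since $M(\vec{\gamma})\ge \frac{1}{\alpha^*}\text{OPT}(\vec{\gamma})$ pointwise, it is enough, for each candidate online algorithm $\text{ALG}$, to exhibit one consistent sequence on which $R(\text{ALG}\mid\vec{\gamma})\le M(\vec{\gamma})+o(T)$. I would produce such a sequence by a dichotomy on $a$, reducing the informative regime to the trivial ceiling $R(\text{ALG}\mid\vec{\gamma})\le \text{OPT}(\vec{\gamma})$ and the uninformative regime to Theorem~3.1 of \cite{balseiro2019learning}.

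For $a=0$ the prediction carries no asymptotic information, so I would invoke the competitive lower bound directly: Theorem~3.1 of \cite{balseiro2019learning} furnishes an adversarial family on which every online algorithm obeys $R(\text{ALG}\mid\vec{\gamma})\le \frac{1}{\alpha^*}\text{OPT}(\vec{\gamma})+o(T)$. Because $M(\vec{\gamma})\ge \frac{1}{\alpha^*}\text{OPT}(\vec{\gamma})$, this at once gives $R(\text{ALG}\mid\vec{\gamma})\le M(\vec{\gamma})+o(T)$ on that family, regardless of what the prediction induces. The only point to check is \emph{consistency}: the perfect dual variable $\mu^*$ of the constructed instance must sit a $\Theta(1)$ distance from the given $\hat{\mu}$ so that the accuracy parameter is genuinely $0$. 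Since both vectors lie in the rectangle $[0,\mu^{\max}_1]\times\cdots\times[0,\mu^{\max}_m]$ the distance is automatically at most $\kappa=\|\mu^{\max}\|_1$, and a generic (or, if necessary, slightly perturbed) choice of the hard instance keeps it bounded below by a constant.

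For $a>0$ the defining bound $\|\hat{\mu}-\mu^*\|_1\le \kappa T^{-a}\to 0$ pins the instance's perfect dual variable arbitrarily close to $\hat{\mu}$ on \emph{every} consistent sequence. I would then argue that following $\hat{\mu}$ is asymptotically as good as following $\mu^*$: applying \cref{assumption: Adversarial Arrival Model(b)} to the constant sequences $\mu_t\equiv\hat{\mu}$ and $\mu_t\equiv\mu^*$ shows their depletion times agree up to $o(T)$, and away from the measure-zero boundary the two dual vectors induce identical greedy actions, so the cumulative rewards differ by $o(T)$. Combined with \cref{corollary:perfect dual} (following $\mu^*$ earns $\text{OPT}(\vec{\gamma})-O(1)$), this gives $R(\text{PRD}\mid\vec{\gamma})\ge \text{OPT}(\vec{\gamma})-o(T)$, hence $M(\vec{\gamma})\ge \text{OPT}(\vec{\gamma})-o(T)$. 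Since no algorithm can earn more than $\text{OPT}(\vec{\gamma})$, every $\text{ALG}$ satisfies $R(\text{ALG}\mid\vec{\gamma})\le \text{OPT}(\vec{\gamma})\le M(\vec{\gamma})+o(T)$, and the benchmark is again unbeatable. Knowledge of $a$ is never used in either branch, which justifies the ``even if $a$ is known'' clause.

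The main obstacle is the reward-continuity step in the $a>0$ branch: quantifying how an $O(T^{-a})$ perturbation of the dual variable propagates through the discontinuous greedy primal map $\mu\mapsto x^{\mu}_t$ to the total reward. This is exactly where the non-degeneracy hypothesis \cref{assumption: Adversarial Arrival Model(b)} and the boundedness constants $\underline{g},\bar{g},\bar{r}$ are indispensable, since without them a vanishing dual perturbation could still flip the induced action on a linear fraction of periods and shift the reward by $\Theta(T)$. A secondary and milder point is confirming, in the $a=0$ branch, that the construction of \cite{balseiro2019learning} can be realized with its perfect dual bounded away from $\hat{\mu}$; I expect this to follow from the freedom in scaling the hard instance, but it should be verified rather than assumed.
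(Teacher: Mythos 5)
The paper offers no proof of this proposition at all: it is stated as an immediate corollary of Theorem~3.1 in \cite{balseiro2019learning}, the implicit one-line argument being that the benchmark dominates $\frac{1}{\alpha^*}\textup{OPT}(\vec{\gamma})$ pointwise and no online algorithm can beat $\frac{1}{\alpha^*}\textup{OPT}(\vec{\gamma})$ on worst-case sequences. Your $a=0$ branch is exactly that argument, and the consistency caveat you flag (that the hard family can be realized with $\mu^*$ a constant distance from $\hat{\mu}$) is the right thing to worry about and is resolvable by rescaling.

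The genuine gap is in your $a>0$ branch. The claim that $\|\hat{\mu}-\mu^*\|_1\le\kappa T^{-a}$ forces $R(\textup{PRD}\mid\vec{\gamma})\ge\textup{OPT}(\vec{\gamma})-o(T)$ on \emph{every} consistent sequence is false, and neither \cref{assumption: Adversarial Arrival Model(b)} nor the ``measure-zero boundary'' heuristic rescues it: that assumption constrains only depletion times, not which actions are induced, and an adversary who sees $\hat{\mu}$ can place the decision boundary exactly at $\hat{\mu}$ on a linear fraction of periods. Concretely, take $m=1$, $\rho=1$, $\mathcal{X}_t=\{0,1\}$, $g_t(1)=1$, and $r_t(1)=\hat{\mu}-\tfrac{\kappa}{2}T^{-a}$ for all $t$. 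Then $\textup{OPT}=\Theta(T)$ (accept everything), the nearest perfect dual is $\mu^*=\hat{\mu}-\tfrac{\kappa}{2}T^{-a}$ so the accuracy parameter is exactly $a$, Assumptions \ref{assumption: Adversarial Arrival Model(a)} and \ref{assumption: Adversarial Arrival Model(b)} hold (every nearby dual either never depletes or depletes only at time $\approx T$), yet $\textup{PRD}$ rejects every request and earns $0$. So your asserted reward-continuity step fails, and with it the conclusion $M(\vec{\gamma})\ge\textup{OPT}(\vec{\gamma})-o(T)$ for all consistent $\vec{\gamma}$. The repair is easy but changes the argument: you only need \emph{one} consistent witness sequence per algorithm, so for $a\in(0,\infty)$ either exhibit a single consistent instance on which $\textup{PRD}$ is near-optimal (e.g.\ rewards comfortably above $\hat{\mu}$ except for one arrival that pins $\|\hat{\mu}-\mu^*\|_1=\Theta(T^{-a})$) and invoke $R(\textup{ALG})\le\textup{OPT}$, or show that the \cite{balseiro2019learning} hard family itself can be realized consistently with accuracy $a$ and reuse the $a=0$ argument verbatim. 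Note also that the counterexample above shows the benchmark in this proposition can be strictly below $\textup{OPT}(\vec{\gamma})-o(T)$ even for large $a$, so the ``trivial ceiling'' route genuinely does not cover all consistent sequences.
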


\subsection{Our Results: Online Resource Allocation with Predictions}
We are finally prepared to state our main result, which  to develop a single algorithm that achieves the optimal performance using predictions, without knowing the underlying arrival model and the prediction accuracy.

\begin{theorem}[Upper Bound]
	Assume that Assumptions \ref{assumption: Adversarial Arrival Model(a)} and \ref{assumption: Adversarial Arrival Model(b)} hold. Given a prediction $\hat{\mu}$ with (unknown) accuracy parameter $a$ and given $0<\delta\leq 1$, there exists an algorithm \textup{(MainALG)} such that: 
	\begin{enumerate}
		\item[(1)] If the arrivals are stochastic,
		$$\textup{Regret}(\textup{MainALG})=\tilde{O}(\max\{T^{\frac{1}{2}-a},1\});$$
		\item[(2)] If the arrivals are adversarial and $(\lambda,\delta)$-stationary, $$\limsup_{T\to\infty}\sup_{\gamma\in\mathcal{S}^T}\left\{\frac{1}{T}\left( (1-\lambda)\max\left\{\frac{1}{\alpha^*}\textup{OPT}(\gamma),\textup{PRD}( \gamma)\right\} - R(\textup{MainALG}\mid \gamma)\right)\right\}\leq \delta\bar{r}.$$
	\end{enumerate}
\end{theorem}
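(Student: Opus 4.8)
The plan is to build a single oblivious algorithm in two layers: an \emph{inner} dual-learning process that warm-starts the mirror-descent update \cref{eqn:MDA update dual} at the prediction $\hat\mu$ and refines it with a parameter-free (adaptive step-size) rule, and an \emph{outer} phase structure that runs hypothesis tests to detect adversarial behavior and, when profitable, commit to following $\hat\mu$. For the stochastic guarantee I would analyze the inner process alone. The relevant comparator is the perfect dual variable $\mu^*$ of \cref{corollary:perfect dual}, whose distance to the warm start is $\|\mu^*-\hat\mu\|_1\le\kappa T^{-a}$ by \cref{eqn:accuracy-parameter}. The crucial observation is that the $C_3$ term in \cref{prop:balseiro} measures distance from $\mu_1$ to a fixed vertex set, so naively warm-starting MDA does \emph{not} improve its $O(\sqrt T)$ rate; one instead needs a comparator-adaptive bound whose regret scales with $V_h(\mu^*,\hat\mu)\asymp\|\mu^*-\hat\mu\|_1^2$. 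A parameter-free mirror descent delivers exactly this, producing \emph{dual} regret against $\mu^*$ of order $\tilde O(\|\mu^*-\hat\mu\|_1\sqrt T)=\tilde O(T^{1/2-a})$, where the logarithmic factor is the known price of not knowing $\|\mu^*-\hat\mu\|_1$ (equivalently $a$), matching up to logarithms the lower bound of \cite{orabona2013dimension}.

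Next I would run the dual-to-primal conversion underlying \cref{prop:balseiro}, but with comparator $\mu^*$ rather than the vertex set: using that $\phi_t=\rho-g_t(x_t)$ is a subgradient of $D_t$ at $\mu_t$, the $\sigma$-strong convexity of $h$, and the $O(1)$ suboptimality of $\mu^*$ from \cref{corollary:perfect dual} (together with the regularity Assumption~\cref{assumption main}), the primal regret against $\textup{OPT}$ is bounded by the dual regret plus a resource-depletion correction. A key step is to show this correction \emph{also} scales as $\tilde O(T^{1/2-a})$ and not merely $O(\sqrt T)$: since the iterates remain within $O(T^{-a})$ of $\mu^*$, the consumption trajectory they induce stays within a comparable band of the optimal one, so the reward lost to mistimed depletion is of the same order. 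Combining the two pieces gives $\textup{Regret}(\textup{MainALG})=\tilde O(\max\{T^{1/2-a},1\})$.

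For the adversarial guarantee, the inner process already yields $\tfrac1{\alpha^*}\textup{OPT}(\vec\gamma)-O(\sqrt T)$ by \cref{prop:balseiro}(2), but it does not by itself capture the possibly larger reward $R(\textup{PRD}\mid\vec\gamma)$, so the outer layer is needed. Because the arrival sequence is exogenous (fixed in advance), I can maintain online the counterfactual cumulative reward that pure PRD would have earned, tracked against a virtual resource budget, and compare it to the worst-case baseline without executing it. Partitioning $\{1,\dots,T\}$ into $\Theta(1/\delta)$ phases of length $\Theta(\delta T)$, the rule is to keep running the inner process unless the simulated PRD reward exceeds the inner trajectory by a positive margin, in which case I commit to following $\hat\mu$. \cref{assumption: Adversarial Arrival Model(b)} is what makes the commitment safe: as the dual iterates stay within $\zeta$ of $\hat\mu$, the depletion times of the committed policy and of pure PRD differ by only $o(T)$, so switching does not prematurely exhaust a resource, and the reward sacrificed to testing, to the first uninformed phase, and to the margin is $O(\delta T)$. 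This yields $\max\{\tfrac1{\alpha^*}\textup{OPT}(\vec\gamma),R(\textup{PRD}\mid\vec\gamma)\}-\bar r\delta T$ in the limit.

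The main obstacle is reconciling these two guarantees in \emph{one} oblivious procedure. The tension is quantitative: testing-and-switching overhead that is harmless at the coarse $\delta T$ scale demanded under adversarial arrivals would, if incurred under stochastic arrivals, easily swamp the far finer $T^{1/2-a}$ target — even a $\Theta(\delta T)$ set of phases executing the non-adaptive PRD (which has only an $O(\sqrt T)$ rate) would contribute $\Theta(\delta\sqrt T)$ regret and destroy the bound. The resolution I would pursue is to make the outer layer trigger a costly commitment \emph{only} when $\hat\mu$ strictly outperforms the inner process by a margin, which under stochastic arrivals never occurs (both are near-optimal, and no policy beats $\textup{OPT}$), so with high probability — using \cref{assumption: Adversarial Arrival Model(a)} to exclude a degenerate prediction — the algorithm stays on the inner parameter-free mirror descent and the fine regret is preserved, while under adversarial arrivals the same trigger fires precisely when PRD is the better benchmark. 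Verifying that these two components remain essentially non-interfering, and that the margin and phase lengths can be tuned to respect both scales simultaneously, is the crux of the argument.
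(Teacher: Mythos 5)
Your stochastic half is essentially the paper's: warm-start a parameter-free (Carmon--Hinder style) dual mirror descent at $\hat\mu$, get dual regret $\tilde O(\|\hat\mu-\mu^*\|_1\sqrt T)$ against the comparator $\mu^*$, and convert to primal regret. (One small correction: the depletion correction is not controlled by arguing the iterates stay in an $O(T^{-a})$ band of $\mu^*$; as in the proof of \cref{thm:stochastic}, one picks the comparator $(\bar r/\rho_j)e_j$ for the depleted resource $j$ and the correction telescopes to a constant $\bar r\bar g/\underline\rho$.) The genuine gaps are in your outer layer. First, your switching test compares a simulated PRD reward to the inner trajectory's reward, whereas the paper tests the \emph{realized} reward against the prefix offline optimum $\textup{OPT}_{t-1}(\gamma_1,\dots,\gamma_{t-1})$. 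This difference matters: in the paper, ``the test never fired'' is itself a certificate that the realized reward is within $L\log(T)\sqrt T$ of the prefix optimum, which dominates \emph{both} benchmarks in the max --- this is exactly what makes Case 3 work when an adversarial sequence looks stochastic. Under your test, if it never fires you only know PRD is not much better than the inner process, and you must fall back on the claim that the adaptive-step-size inner process is $\alpha^*$-competitive ``by \cref{prop:balseiro}(2)''. That claim is unjustified: \cref{prop:balseiro} is for fixed step size $\eta\sim T^{-1/2}$, and the parameter-free guarantee (\cref{lemma:carmon}) is a high-probability bound for the stochastic setting; nothing ensures the bisection-tuned step sizes yield the adversarial competitive ratio.

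Second, ``committing to follow $\hat\mu$'' when the test fires does not deliver $R(\textup{PRD}\mid\vec\gamma)$ on the remaining horizon, because by the switch time the inner process has consumed resources on a trajectory that need not resemble PRD's. You invoke \cref{assumption: Adversarial Arrival Model(b)} to argue the depletion times agree up to $o(T)$, but that assumption only applies to dual sequences staying within $\zeta$ of $\hat\mu$; the parameter-free inner iterates have no such guarantee (their step sizes can grow geometrically), so the hypothesis is not met. The paper avoids both problems structurally: it releases only $\lfloor\delta T\rfloor\rho$ resources per block so the stochastic phase cannot over-consume, and upon switching it runs the Adversarial Arrival Algorithm with step size $\eta\sim\epsilon(T)/T$, whose iterates provably stay within $O(\epsilon(T))$ of $\hat\mu$ so that \cref{lemma:adversarial_1} and \cref{lemma:adversarial_2} give \emph{both} $\frac{1}{\alpha^*}\textup{OPT}$ and $R(\textup{PRD})$ simultaneously on the suffix --- no decision about which benchmark to chase is ever needed. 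Finally, \cref{assumption main} is what lets one stitch $\textup{OPT}$ of the prefix and suffix back into $\textup{OPT}(\vec\gamma)$ around the switch time; you cite it only in the stochastic argument, where it plays no role, and omit it where it is essential.
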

\noindent A few remarks are in order:
\begin{itemize}
	\item The performance guarantee under adversarial arrivals is better for smaller $\lambda$ and $\delta$, which matches the intuition that one can hope to achieve better performance with more stationary arrivals.
	\item If the arrivals are known to be adversarial, which we will discuss in the next section (Algorithm \ref{alg:adversarial} and \cref{thm:adversarial}), there exists an algorithm that achieves
	$$\limsup_{T\to\infty}\sup_{\gamma\in\mathcal{S}^T}\left\{\frac{1}{T}\left(\max\left\{\frac{1}{\alpha^*}\textup{OPT}(\gamma),\textup{PRD}( \gamma)\right\} - R(\textup{ALG}\mid \gamma)\right)\right\}\leq 0$$ That is, we are able to not suffer from nonstationarity. This is because the performance requirement is much higher for stochastic arrivals (sub-linear regret), which requires a conservative consumption of resources and hence obtains less rewards when the arrivals are highly nonstationary. This idea is elaborated in the lower bound construction below.
\end{itemize}

We provide a lower bound which shows \cref{thm:main} is tight in the sense that for any algorithm that achieves sub-linear regret under stochastic arrivals, one cannot replace $\lambda$ with any number smaller and still get meaningful guarantees under adversarial arrivals. The proof of \cref{prop:lower bound of full} appears in Appendix \ref{Appendix A.5}. The lower bound construction consists of two instances, stochastic with bad prediction and adversarial with good prediction, that are provably indistinguishable for a certain period of time.

\begin{proposition}[Lower Bound]\label{prop:lower bound of full}
	For any $0\leq \lambda'<\lambda\leq 1$, $0<\delta\leq 1$, and $K>1$,
	there exists a sequence of instances $\gamma$ of increasing time horizon $T$ that satisfies Assumptions \ref{assumption: Adversarial Arrival Model(a)} and \ref{assumption: Adversarial Arrival Model(b)}, such that for any algorithm \textup{(ALG)}, at least one of the following holds:
	
	\begin{enumerate}
		\item[(1)] The arrivals are stochastic, and
		$$\textup{Regret}(\textup{ALG})=\Omega(T);$$
		\item[(2)] The arrivals are adversarial and $(\lambda,\delta)$-stationary, and $$\limsup_{T\to\infty}\left\{\frac{1}{T}\left( (1-\lambda')\max\left\{\frac{1}{\alpha^*}\textup{OPT}(\gamma), \textup{PRD}( \gamma)\right\} - R(\textup{ALG}\mid \gamma)\right)\right\} > K\delta\bar{r}.$$ 
	\end{enumerate}
\end{proposition}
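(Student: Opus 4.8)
The plan is to run a classical indistinguishability (coupling) argument against a \emph{deceptively stochastic} adversary — precisely the behavior that Assumption~\ref{assumption main} will later rule out. I would take a single resource ($m=1$) with capacity $\rho T$ chosen so the budget binds (e.g.\ $\rho=\tfrac12$), and split the horizon into two equal phases. The first phase presents a \emph{plausible i.i.d.\ sample}: at each period the decision-maker sees a ``high'' request (reward $H$, unit consumption) or a ``low'' request (reward $1$, unit consumption), mixed in the proportions of some distribution $\mathcal P$. I then branch into two continuations that \emph{agree on this prefix}: in the stochastic world $I_S$ the arrivals keep following $\mathcal P$ (so rationing budget toward future high requests is optimal), while in the adversarial world $I_A$ the second phase consists only of void requests (so any budget not spent in the first phase is irrecoverably wasted). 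Both worlds share the same prefix and the same prediction $\hat\mu$, and I would check that each satisfies Assumptions~\ref{assumption: Adversarial Arrival Model(a)} and \ref{assumption: Adversarial Arrival Model(b)}: the values are bounded away from the acceptance boundary, so nearby duals induce comparable depletion times.

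The heart of the argument is the coupling. Since the first-phase arrivals and the prediction are literally identical in $I_S$ and $I_A$, any algorithm — deterministic or randomized — induces the \emph{same} joint law of first-phase actions in the two worlds; in particular its expected first-phase budget consumption, call it $k$, is one number that cannot depend on the (yet-unseen) continuation. I would then bound the two performance targets as functions of $k$. Because only $\approx p\,T/2$ high requests appear in the prefix, spending $k>pT/2$ forces at least $k-pT/2$ units onto low requests; on $I_S$ each such unit displaces a future high request, so $\mathrm{Regret}\gtrsim (H-1)\,(k-pT/2)_+$. On $I_A$, by contrast, budget \emph{not} spent in the prefix is wasted against the void tail, while $R(\mathrm{PRD}\mid\vec\gamma)$ (hence the benchmark $\max\{\tfrac1{\alpha^*}\mathrm{OPT},\mathrm{PRD}\}$) corresponds to exhausting the budget in the prefix; thus the gap $\gtrsim \rho T-k$.

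These two bounds produce the dichotomy. Choosing $p=\rho=\tfrac12$, the regret is $\Omega\big((k-T/4)_+\big)$ and the gap is $\Omega\big(T/2-k\big)$, so their sum is $\Omega(T)$ for every $k$: if the algorithm spends aggressively ($k$ near $\rho T$) conclusion~(1) holds on $I_S$, and if it spends conservatively ($k$ small) conclusion~(2) holds on $I_A$. Either way one of the two constructed instances — both satisfying Assumptions~\ref{assumption: Adversarial Arrival Model(a)} and \ref{assumption: Adversarial Arrival Model(b)} — witnesses the claim, and letting $T$ grow yields the stated sequence $\vec\gamma_T$.

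The step I expect to be the main obstacle is making the worlds genuinely indistinguishable \emph{while keeping the common prediction valid for both}. The pacing dilemma is forced only if the algorithm cannot read the continuation off $\hat\mu$ itself; but a perfect dual for $I_S$ (threshold in $(1,H)$, rationing toward high requests) and a perfect dual for $I_A$ (threshold in $(0,1)$, exhausting budget on the prefix) generically differ by $\Theta(1)$, so a naive choice of $\hat\mu$ would leak the world and collapse the coupling. Reconciling this — selecting $\mathcal P$, the value gap $H-1$, and the capacity so that $\hat\mu$ sits within $o(1)$ of a perfect dual in \emph{each} world (to meet Assumption~\ref{assumption: Adversarial Arrival Model(a)}) yet still leaves the optimal first-phase pace ambiguous — is the delicate part, and it is exactly the gap that Assumption~\ref{assumption main} is introduced to close. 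I would attack it by driving the two perfect duals to a common limit as $T\to\infty$ and absorbing the resulting $o(T)$ discrepancy in depletion times through Assumption~\ref{assumption: Adversarial Arrival Model(b)}, so that the prediction remains non-trivially accurate on both instances while the deceptive prefix still denies the algorithm the information it needs to pace correctly.
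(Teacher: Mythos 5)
Your proposal has the same architecture as the paper's proof: two instances sharing a common prefix on which the algorithm is forced to commit to a single expected consumption level $k$, followed by two bounds whose sum is $\Omega(T)$ uniformly in $k$. (The paper runs the dilemma in the opposite direction --- its stochastic instance rewards aggressive prefix spending and its adversarial instance hides a valuable tail, whereas yours rewards rationing and punishes it with a void tail --- but both directions are viable.) However, two steps you leave open are exactly where the content lies. First, the choice of $\hat{\mu}$. Your sketch of ``driving the two perfect duals to a common limit'' is misdirected: in your construction the perfect-dual sets are the fixed intervals $(1,H]$ (for $I_S$) and $[0,1)$ (for $I_A$), which already meet at the threshold $1$; nothing converges. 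The resolution, which is the paper's central device, is to place $\hat{\mu}$ at distance $1/\log(T)$ from that threshold on the \emph{accepting} side (e.g.\ $\hat{\mu}=1-1/\log(T)$, mirroring the paper's $\hat\mu=1+1/\log(T)$): then $\|\hat{\mu}-\mu^*\|_1=O(1/\log T)=o(1)$ in \emph{both} worlds so Assumption \ref{assumption: Adversarial Arrival Model(a)} holds, PRD exhausts the budget and attains $\mathrm{OPT}$ on $I_A$ (which is what makes your benchmark gap $\rho T-k$ valid), yet PRD is disastrous on $I_S$. The point is that $\ell_1$-proximity to a perfect dual does not control the induced actions near a decision threshold. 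Note that this directly contradicts your earlier assertion that ``the values are bounded away from the acceptance boundary'': the construction must live at the boundary, and if you instead place $\hat\mu$ above the threshold, PRD on $I_A$ earns only $HT/4$, which the rationing-optimal algorithm also achieves, and conclusion (2) collapses.

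Second, your coupling claim --- ``the same joint law of first-phase actions in the two worlds'' --- is false as stated, because the prefix of $I_S$ is a random i.i.d.\ draw from $\mathcal P$ while the prefix of $I_A$ is one fixed sequence; conditioning on that particular realization is an exponentially unlikely event under $\mathcal P^{T/2}$. The paper avoids this entirely by taking the stochastic instance to be a point mass ($\mathcal S=\{\gamma^1\}$), so the shared prefix is literally deterministic. Your construction genuinely needs a mixture in the stochastic world to create the rationing incentive, so you would have to repair the coupling either by letting the adversary randomize its prefix according to $\mathcal P^{T/2}$ or by an averaging argument selecting a typical prefix realization on which the algorithm's conditional expected consumption is close to its unconditional mean. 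Both fixes are standard, but without one of them the dichotomy in $k$ does not follow. With these two repairs (and granting, as the paper implicitly does, the unverified claim that such threshold-straddling predictions satisfy Assumption \ref{assumption: Adversarial Arrival Model(b)}), your argument goes through.
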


\section{Algorithm and Proof of Main Result}\label{Section main algorithm}

In this section, we first present two algorithms that utilize the prediction in an optimal way for the two arrival models respectively. 
Then we combine these two algorithms to a single algorithm that is oblivious to both the prediction quality and the arrival model, which completely solves the Online Resource Allocation with Prediction.

Our algorithms for each arrival model utilize mirror descent, which take an initial dual variable, a step-size, and a reference function\footnote{For completeness, in Appendix \ref{Appendix A.2.5} we state the standard assumptions on choosing the reference function $h(\cdot)$ for mirror descent algorithms \citep{beck2003mirror,bubeck2015convex,lu2018relatively,lu2019relative}.} as inputs. At each time period $t$, the algorithms take the action induced by the current dual variable $\mu_t$, and performs a first-order update on the dual variable. With prediction $\hat{\mu}$, a natural initialization of the dual variable is to set $\mu_1=\hat{\mu}$, i.e., the algorithms start by assuming the prediction is accurate. Then, the algorithms use adaptive step sizes $\eta_t$ in mirror descent steps depending on the arrival model and the prediction's behavior. 

\subsection{Algorithm for the Stochastic Arrival Model}
Let $\hat{\mu}$ be a prediction with accuracy parameter $a$, i.e., $||\hat{\mu}-\mu^*||_1\leq\kappa T^{-a}$. By \cref{prop:stochastic lower}, no algorithm can achieve regret better than $O(\max\{T^{\frac{1}{2}-a},1\})$ even if $a$ is known. As a comparison, we can show that the optimal fixed step size for the Mirror Descent Algorithm 
is $\eta\sim T^{-\frac{1-a}{2}}$ using similar method as the proof of \cref{prop:balseiro}, which incurs $O(\max\{T^{\frac{1-a}{2}},1\})$ regret. Therefore, mirror descent with fixed step size is sub-optimal even if the prediction quality is known. This suggest us to use adaptive step sizes. The step size we use is drawn from \cite{carmon2022making} in their work in parameter-free optimization. It follows the line of work in the more general online learning problem of parameter-free regret minimization \citep{chaudhuri2009parameter,cutkosky2019artificial,cutkosky2017online,cutkosky2018black,mhammedi2020lipschitz,streeter2012no}.

We list some notations used in Algorithm \ref{alg:stochastic}, which follow notations in \cite{carmon2022making}. Given initial dual solution $\mu_1$ and step-size $\eta$:
\begin{itemize}
	\item[(a)] Let $x_t(\mu_1,\eta)$ and $\mu_{t}(\mu_1,\eta)$ be the action we take and the the dual variable we get after $t-1$ iterations of the Mirror Descent Algorithm with initial dual solution $\mu_1$, step-size $\eta$, and the same requests as the requests that Algorithm \ref{alg:stochastic} received so far;
	\item[(b)] Define $\theta_t(\mu_1,\eta):=\max_{s\leq t}||\mu_1-\mu_s(\mu_1,\eta)||_1$ to be the maximum $\ell_1$-distance from any updated dual variable used in the Mirror Descent Algorithm before time $t$ to the initial dual variable;
	\item[(c)] Define $\Phi_t(\mu_1,\eta):=\sum_{s=1}^{t}||-g_s(x_s(\mu_1,\eta))+\rho||^2_\infty$ to be the running sum of squared $\ell_\infty$-norms of the dual functions' sub-gradients.
\end{itemize}

Algorithm \ref{alg:stochastic}, which we call the \textit{Stochastic Arrival Algorithm} (SA), initializes the dual variable at the prediction $\hat{\mu}$ and updates the dual variable at each time period through mirror descent with fine-tuned step sizes. A high-level intuition behind the choices of step sizes is that, it is well-known \citep{francesco2020ICML} that the hindsight (asymptotically) optimal step size is $\eta$ that satisfies $$\eta=\frac{||\mu_1-\mu^*||_1}{\sqrt{\Phi_T(\mu_1,\eta)}}.$$ Because $||\mu_1-\mu^*||_1$ and $\Phi_T(\mu_1,\eta)$ are unknown a priori, at each time period $t$ we use $\theta_t(\mu_1,\eta)$ as an approximation of $||\mu_1-\mu^*||_1$ and use $\Phi_t(\mu_1,\eta)$ as an approximation of $\Phi_t(\mu_1,\eta)$, and these approximations can be proven to be accurate. Then we use bisection to find an approximate solution of the implicit function $$\eta_t=\frac{\theta_t(\mu_1,\eta_t)}{\sqrt{\alpha\Phi_t(\mu_1,\eta_t)+\beta}},$$ where $\alpha,\beta$ are damping parameters. For a more detailed explanation, see \cite{carmon2022making}. Note that the Stochastic Arrival Algorithm does not need to know the accuracy parameter $a$.

\begin{algorithm}
	\small
	\caption{Stochastic Arrival Algorithm (SA)}
	\vspace{0.5em}
	\SetKwFunction{RFB}{Root Finding Bisection}
	\SetKwFunction{MD}{Mirror Descent}
	{\bf Inputs:} Prediction $\hat{\mu}$, total time periods $T$, initial resources $G_1=\rho T$, reference function $h(\cdot): \mathbb{R}^m \rightarrow \mathbb{R}$, and initial step-size $\eta_1$\;
	Initialize $\mu_1\gets\hat{\mu}$\;
	\For{$t$ from $1$ to $T$}{
		Receive request $\left(r_t, g_t, \mathcal{X}_t\right)$\;
		Make the primal decision $x_t$ and update the remaining resources $G_t$:
		\centerline{$
			x_t\in\arg \max_{ x\in \mathcal{X}_t,g_t(x)\leq G_t}\left\{r_t(x)-\mu_t^{\top} g_t(x)\right\}$;}
		\centerline{$G_{t+1}\gets G_t-g_t\left(x_t\right).$}
		
		Obtain a sub-gradient of the dual function:
		\centerline{$\phi_t\gets -g_t\left(x_t\right)+\rho.$} 
		
		Update the dual variable by mirror descent:
		\centerline{$\mu_{t+1}\gets \arg \min _{\mu \in \mathbb{R}_{+}^m} \phi_t^{\top} \mu+\frac{1}{\eta_t} V_h\left(\mu, \mu_t\right) $,}
		where $V_h(x, y):=h(x)-h(y)-\nabla h(y)^{\top}(x-y)$ is the Bregman divergence.
		\vspace{0.5em}
		
		Tune the step size:\hfill
		
		\For{$k=2,4,8,16,\dots$}{
			$t_k\gets \lfloor t/2k\rfloor$\;
			$\alpha^{(k)}\gets 32^2C_t^{(k)}$, $\beta^{(k)}\gets (32C_t^{(k)}(\bar{g}+\bar{\rho}))^2$ where $C_t^{(k)}:=2k+\log\left(60T\log^2(6t)\right)$\;
			\If{$\RFB(\eta_t,2^{2^k}\eta_t;t_k,\alpha^{(k)},\beta^{(k)})<\infty$}{$\eta_{t+1}\gets \RFB(\eta_{t},2^{2^k}\eta_t;t_k,\alpha^{(k)},\beta^{(k)})$.}
		}
	}
	\SetKwProg{Fn}{Function}{:}{}
	\Fn{\RFB{$\eta_{\textup{lo}},\eta_{\textup{hi}};t',\alpha,\beta$}}{$\psi(\cdot):=\eta\to\theta_{t'}(\hat{\mu},\eta)/\sqrt{\alpha\Phi_{t'}(\hat{\mu},\eta)+\beta}$\;  
		\textbf{if} $\eta_{\textup{hi}}\leq\psi(\eta_{\textup{hi}})$ \textbf{then return} $\infty$\;
		\textbf{if} $\eta_{\textup{lo}}>\psi(\eta_{\textup{lo}})$ \textbf{then return} $\eta_{\textup{lo}}$\;
		\While{$\eta_{\textup{hi}}>2\eta_{\textup{lo}}$}{$\eta_{\textup{mid}}\gets \sqrt{\eta_{\textup{hi}}\eta_{\textup{lo}}}$\;
			\textbf{if} $\eta_{\textup{mid}}\leq\psi(\eta_{\textup{mid}})$ \textbf{then} $\eta_{\textup{lo}}\gets\eta_{\textup{mid}}$ \textbf{else} $\eta_{\textup{hi}}\gets\eta_{\textup{mid}}$.}
		\textbf{if} $\theta_{t'}(\hat{\mu},\eta_{\textup{hi}})\leq\theta_{t'}(\hat{\mu},\eta_{\textup{lo}})\frac{\psi(\eta_{\textup{hi}})}{\eta_{\textup{hi}}}$ \textbf{then return} $\eta_{\textup{hi}}$ \textbf{else return} $\eta_{\textup{lo}}$.
	}
	\textbf{End Function}
	\vspace{0.5em}
	\label{alg:stochastic}
\end{algorithm}

\begin{proposition}
	\label{thm:stochastic}
	Consider the Stochastic Arrival Algorithm \textup{(SA)} under the stochastic arrival model. Given a prediction $\hat{\mu}$ with (unknown) accuracy parameter $a$, it holds that: $$\textup{Regret}(\textup{SA})=\tilde{O}(\max\{T^{\frac{1}{2}-a},1\}).$$
\end{proposition}

The proof of \cref{thm:stochastic} can be found in Appendix \ref{Appendix B}. By \cref{prop:stochastic lower}, the Stochastic Arrival Algorithm achieves optimal worst-cast regret up to logarithm factors.

\subsection{Algorithm for the Adversarial Arrival Model}
Different from the stochastic arrival model, under the adversarial arrival model it is impossible to achieve sub-linear worst-case regret. Instead, we directly compare the reward obtained by our algorithm to the maximum reward of two natural benchmark algorithms: the Mirror Descent Algorithm (which is optimal when the prediction quality is low) and the Prediction Algorithm (which is optimal when the prediction quality is high). That is, for an algorithm $\text{ALG}$, we will analyze the following quantity:
$$\limsup_{T\to\infty}\sup_{\gamma\in\mathcal{S}^T}\left\{\frac{1}{T}\left(\max\left\{\frac{1}{\alpha^*}\text{OPT}(\gamma),\text{PRD}(\gamma)\right\} - R(\text{ALG}\mid \gamma)\right)\right\}.$$ 

We give Algorithm \ref{alg:adversarial} for the adversarial arrival model, which we call the \textit{Adversarial Arrival Algorithm} (AA). It performs mirror descent with  fixed step size $\eta\sim \epsilon(T)/T$.

\begin{algorithm}
	\small
	\caption{Adversarial Arrival Algorithm (AA)}
	\vspace{0.5em}
	{\bf Inputs:} Prediction $\hat{\mu}$, total time periods $T$, initial resources $G_1=\rho T$, reference function $h(\cdot): \mathbb{R}^m \rightarrow \mathbb{R}$, upper bound function $\epsilon(T)=o(1)$, and step-size $\eta\sim \epsilon(T)/T$\;
	Initialize $\mu_1\gets\hat{\mu}$\;
	\For{$t$ from $1$ to $T$}{
		Receive request $\left(r_t, g_t, \mathcal{X}_t\right)$\;
		Make the primal decision $x_t$ and update the remaining resources $G_t$:
		\centerline{$
			x_t\in\arg \max_{ x\in \mathcal{X}_t,g_t(x)\leq G_t}\left\{r_t(x)-\mu_t^{\top} g_t(x)\right\}$;}
		\centerline{$G_{t+1}\gets G_t-g_t\left(x_t\right).$}
		
		Obtain a sub-gradient of the dual function:
		\centerline{$\phi_t\gets -g_t\left(x_t\right)+\rho.$}
		
		Update the dual variable by mirror descent:
		\centerline{$\mu_{t+1}\gets \arg \min _{\mu \in \mathbb{R}_{+}^m} \phi_t^{\top} \mu+\frac{1}{\eta} V_h\left(\mu, \mu_t\right)$,}
		where $V_h(x, y):=h(x)-h(y)-\nabla h(y)^{\top}(x-y)$ is the Bregman divergence.}
	\label{alg:adversarial}
\end{algorithm}

\begin{proposition}
	\label{thm:adversarial}
	Assume that Assumptions \ref{assumption: Adversarial Arrival Model(a)} and \ref{assumption: Adversarial Arrival Model(b)} hold. Consider the Adversarial Arrival Algorithm \textup{(AA)} under the adversarial arrival model. Given a prediction $\hat{\mu}$ with (unknown) accuracy parameter $a$, it holds that: $$\limsup_{T\to\infty}\sup_{\gamma\in\mathcal{S}^T}\left\{\frac{1}{T}\left( \max\left\{\frac{1}{\alpha^*}\textup{OPT}(\gamma),\textup{PRD}( \gamma)\right\} - R(\textup{AA}\mid \gamma)\right)\right\}\leq 0.$$
\end{proposition}

The proof of \cref{thm:adversarial} can be found in Appendix \ref{Appendix B}. \cref{thm:adversarial} states that the Adversarial Arrival Algorithm achieves the maximum of the two benchmark algorithms without knowing the prediction quality. \cref{thm:adversarial} is tight by \cref{prop:adv lower}.

\subsection{Main Algorithm: Detection of Nonstationarity}

With the Stochastic Arrival Algorithm and the Adversarial Arrival Algorithm, we are ready to present our main algorithm, which merges the two algorithms and works for both arrival models. 

\begin{algorithm}
	\small
	\caption{Main Algorithm}
	\vspace{0.5em}
	{\bf Inputs:} Prediction $\hat{\mu}$, total time periods $T$, initial rewards $R_1=0$, initial resources $G_1=0$, reference function $h(\cdot): \mathbb{R}^m \rightarrow \mathbb{R}$, upper bound function $\epsilon(T)=o(1)$, constant $L$ which is specified in \cref{eqn:thm 3 constant} in Appendix \ref{Appendix C}, constant $0<\delta\leq 1$, and initial step-size $\eta_1$\;
	\For{$t$ from $1$ to $T$}{
		Receive request $\gamma_t=\left(r_t, g_t, \mathcal{X}_t\right)$\;
		
		\If{$t=k\lfloor \delta T\rfloor+1$ for some $k=0,\dots,\lceil 1/\delta\rceil-1$\vspace{0.3em}}{
			\uIf{$R_{t}+L\log(T)\sqrt{T}\geq\textup{OPT}_{t-1}(\gamma_1,\dots,\gamma_{t-1})$}
			{
				\vspace{0.3em} Release resources for the next $\lfloor \delta T\rfloor$ time periods: $G_{t}\gets G_t+\lfloor\delta T\rfloor\rho$\;  
				Take action $x_t$ given by the Stochastic Arrival Algorithm with the following inputs: total time periods $\lfloor \delta T\rfloor$, initial dual variable $\mu_{k\lfloor\delta T\rfloor+1}=\hat{\mu}$, initial resources $G_{k\lfloor\delta T\rfloor+1}$, reference function $h(\cdot)$, and initial step-size $\eta_1$\;
				Update resources: $G_{t+1}\gets G_{t}-g_t(x_t)$\;
				Update rewards: $R_{t+1}\gets R_{t}+r_t(x_t)$.
			}\Else{\bf break}
		}
		
		\vspace{0.5em} Release all resources: $G_t\gets G_t+\rho(T-t+1)$\;
		Use the Adversarial Arrival Algorithm with initial dual solution $\hat{\mu}$, remaining resources $G_t$, and step size $\eta\sim \epsilon(T)/T$.}
	\label{alg:main}
\end{algorithm}

The Main Algorithm starts by assuming the arrivals are stochastic and using the Stochastic Arrival Algorithm, while carefully releasing the resources to prevent the algorithm from over-consuming resources. Meanwhile, the algorithm keeps monitoring on the arrivals so far to see if the arrivals are truly stochastic. Intuitively, if the arrivals are stochastic, the reward we obtained so far should be similar to the optimal offline reward of drawn from the underlying distribution. If our reward is significantly lower than the reward of the optimal offline reward, we have evidence that with high probability the arrivals are not stochastic, and for the remaining time periods we switch to the Adversarial Arrival Algorithm. Note that if the arrivals are adversarial but they are relatively stationary (by our definition), the algorithm would not be able to detect that the arrivals are adversarial. However, because they 
are stationary, the Stochastic Arrival Algorithm would work well on these arrivals, so it is fine to not switch to the Adversarial Arrival Algorithm.

\setcounter{theorem}{0}
\begin{theorem}[Upper Bound]\label{thm:main}
	Consider the Main Algorithm \textup{(MainALG)}. Assume that Assumptions \ref{assumption: Adversarial Arrival Model(a)} and \ref{assumption: Adversarial Arrival Model(b)} hold. Given a prediction $\hat{\mu}$ with (unknown) accuracy parameter $a$ and given $0<\delta\leq 1$, it holds that: 
	\begin{enumerate}
		\item[(1)] If the arrivals are stochastic,
		$$\textup{Regret}(\textup{MainALG})=\tilde{O}(\max\{T^{\frac{1}{2}-a},1\});$$
		\item[(2)] If the arrivals are adversarial and $(\lambda,\delta)$-stationary, $$\limsup_{T\to\infty}\sup_{\gamma\in\mathcal{S}^T}\left\{\frac{1}{T}\left( (1-\lambda)\max\left\{\frac{1}{\alpha^*}\textup{OPT}(\gamma),\textup{PRD}( \gamma)\right\} - R(\textup{MainALG}\mid \gamma)\right)\right\}\leq \delta\bar{r}.$$
	\end{enumerate}
\end{theorem}

The proof of \cref{thm:main} appears in Appendix \ref{Appendix C}. \cref{thm:main} is tight by \cref{prop:lower bound of full}.

\section{Experiments}\label{Section experiments}

Finally, we describe a set of experiments, one on synthetic data and one on real data, that we performed to empirically evaluate our algorithm. 
The main takeaway is that our algorithm's performance is robust with respect to the quality of the predictions. Specifically, the rewards it obtains is consistently ``close'' to the higher of the rewards obtained by the Mirror Descent Algorithm (which recall is worst-case optimal without predictions) and the Prediction Algorithm.

In both sets of experiments, we used sequential assortment optimization as the application, acting as an online retailer making in-cart recommendations: when each customer checks out, we recommend a subset of products (with certain fixed cardinality). For each product, there is a customer-specific probability that the product will be purchased if recommended (we will discuss the way we obtained these probabilities later), hence generating some revenue. In the Online Resource Allocation with Predictions framework, an action is a choice of a subset of products to recommend, the resources are the inventories of the products, and the reward of an action is the expected profit obtained by recommending the chosen set of products. Our objective is to maximize the total reward. 

Each {\em instance} of our experiment represented a single problem with certain fixed initial inventories and an (online) arrival sequence. For each instance, we were given a prediction on the shadow price of each product. The predictions were generated with various qualities across instances.

Each instance yields three total rewards: one incurred by our algorithm (the ``Main Algorithm''), and two incurred by the benchmark algorithms (the Mirror Descent Algorithm and the Prediction Algorithm). The primary performance metric we report is a form of optimality gap. For some instance $I$, let $R(\textup{PRD}\mid I)$, $R(\textup{MDA}\mid I)$, and $R(\textup{MainALG}\mid I)$ represent the reward generated from instance $I$ using the Prediction algorithm, the Mirror Descent Algorithm, and the Main Algorithm, respectively. Then we can define the \textit{optimality gap} (GAP) of our algorithm as
\[
\mathrm{GAP}(I)=\frac{R(\textup{MainALG}\mid I)-\min\{R(\textup{PRD}\mid I),R(\textup{MDA}\mid I)\}}{\max\{R(\textup{PRD}\mid I),R(\textup{MDA}\mid I)\}-\min\{R(\textup{PRD}\mid I),R(\textup{MDA}\mid I)\}}
\]
If we think of the Main Algorithm as trying to achieve the maximum of the rewards obtained by the two benchmark algorithms, then GAP measures the rewards that the Main Algorithm obtains compared to this maximum, normalized so that $\mathrm{GAP}=1$ implies that the maximum has been obtained, and $\mathrm{GAP}=0$ implies that the minimum of the two rewards was obtained.\footnote{GAP may technically be outside of $[0,1]$.} As a baseline, randomly choosing between the Mirror Descent Algorithm and the Prediction Algorithm yields $\mathrm{GAP}=0.5$.

\subsection{Synthetic Experiment}
We began with a set of smaller, synthetic experiments with 25 products over 1000 time periods, and the task of recommending 2 products at a time. We assumed customers belong to one of 25 ``types.''
The process we used to randomly generate the product prices, the initial inventory levels, and the customer type-specific purchase probabilities, is described in Appendix \ref{Appendix Experiment}. Each time period corresponds to a single arriving customer (drawn uniformly from the 25 types), or no arrival. We used three types of arrival sequences, with the probability of a customer arrival changing over time: {\bf stationary} with a fixed arrival probability of 0.7, {\bf nonstationary} with an arrival probability linearly increasing from 0.4 to 1.0, and {\bf adversarial} with an arrival probability of 0.0 during the first 300 periods and 1.0 afterward. We randomly generated predictions of varying qualities by computing the optimal shadow prices, and adding mean-zero Gaussian noise with standard deviations of 500 ({\bf bad}), 5 ({\bf good}), and 0 ({\bf perfect}). 


\begin{table}[h!]
	\small
	\centering
	\begin{tabular}{@{}lrR{3em}R{3em}cR{3em}R{3em}cR{3em}R{3em}@{}} \toprule
		Median         & $1-\mathrm{CDF}(0.5)$        & \multicolumn{2}{r}{\bf Stochastic} && \multicolumn{2}{r}{\bf Nonstationary} && \multicolumn{2}{r}{\bf Adversarial} \\ \midrule
		\multicolumn{2}{l}{\bf Perfect Predictions} & 0.81            & 0.63           && 0.83              & 0.64             && 0.65            & 0.56            \\
		\multicolumn{2}{l}{\bf Good Predictions}    & 0.77            & 0.62           && 0.81              & 0.64             && 0.64            & 0.56            \\
		\multicolumn{2}{l}{\bf Bad Predictions}     & 0.54            & 0.52           && 0.45              & 0.49             && 0.22            & 0.40\\ \bottomrule 
	\end{tabular}
	
	\vspace{1em}
	
	\begin{tabular}{@{}lrR{3em}R{3em}cR{3em}R{3em}cR{3em}R{3em}@{}} \toprule
		Median         & $1-\mathrm{CDF}(0.5)$        & \multicolumn{2}{r}{\bf Stochastic} && \multicolumn{2}{r}{\bf Nonstationary} && \multicolumn{2}{r}{\bf Adversarial} \\ \midrule
		\multicolumn{2}{l}{\bf Perfect Predictions} & 0.71            & 0.65           && 0.72              & 0.66             && 0.64            & 0.60            \\
		\multicolumn{2}{l}{\bf Good Predictions}    & 0.67            & 0.63           && 0.72              & 0.67             && 0.52            & 0.54            \\
		\multicolumn{2}{l}{\bf Bad Predictions}     & 0.58            & 0.55           && 0.49              & 0.49             && 0.36            & 0.40\\ \bottomrule 
	\end{tabular}
	\vspace{1em}
	\caption{Summary of synthetic experiments. For each of three levels of prediction quality (the rows), and each of three generative arrival models (the columns), two summary statistics are reported over a random ensemble of instances: (left) the median GAP, and (right) the proportion of instances for which the GAP was at least 0.5. (Top) Results over all instances. (Bottom) Results over instances for which the rewards of the Mirror Descent and Prediction algorithms differ by at least 25\%.}
	\label{tab:synthetic-experiment}
\end{table}

The results are summarized in the top half of Table \ref{tab:synthetic-experiment}, which for nine random ensembles of instances (depending on arrival model and prediction quality) reports both the median GAP, and the proportion of instances for which the GAP was at least 0.5 (for both values, higher is better). Recall that no algorithm can be expected to achieve high GAP values (say above 0.5) for all nine ensembles simultaneously. We see that our algorithm generally performs better with higher prediction quality and higher stationarity. Now one issue with GAP as a performance metric is that it can be quite erratic when the Mirror Descent and Prediction algorithms generate similar rewards (as their difference is the denominator in GAP), and these are arguably the instances in which GAP ``matters'' the least from a practical standpoint. Thus, in the bottom half of Table \ref{tab:synthetic-experiment}, in which  instances for which the two rewards are within 25\% of each other have been removed, we see better overall performance. 

\subsection{H\&M Experiment}
We used a dataset from H\&M (a fast-fashion clothing retailer), which contains the online transactions of 105,542 products from 2018 to 2020, along with product features. Because most products have zero or few transactions in two years, we selected the products with the top 5000 number of total transactions for our experiment, which includes 13,697,790 transactions. Our task was to recommend three products.



Each instance runs across three month's transactions from the data. The time horizon for each instance was the maximum number of transactions per day (103,473) multiplied by the total number of days (90), so that each day contained 103,473 time periods, each having zero or one arriving customer. 
To estimate customer-specific purchase probabilities,
we used (customer, transaction time, product $A$, product $B$, price of product $A$, price of product $B$)-tuples and trained a random forest algorithm with the corresponding features of this tuple (a 209-dimensional vector after encoding) to estimate the probability that the customer, who brought product $A$ at that certain time period with the certain price, would also buy product $B$ if recommended. 


To generate predictions, we used three popular forecasting methods ranging from classical algorithms to the state-of-the art:
\begin{itemize}
	\item {\bf Prophet:} A recent algorithm developed by Facebook \citep{taylor2018forecasting} based on a (piecewise-linear) trend and seasonality decomposition, known to work well in practice with minimal tuning. Tuning parameters: software default.
	\item {\bf Exponential Smoothing (Holt Winters):} A classic algorithm based on a (linear) trend and seasonality decomposition, known for its simplicity and robust performance. It is frequently used as a benchmark in forecasting competitions \citep{makridakis2000m3}. Tuning parameters: seasonality of length.
	\item {\bf ARIMA:} Another classic algorithm that is rich enough to model a wide class of nonstationary time-series. Tuning parameters: $(p,q,r)$.
	
\end{itemize}
These experiments were run on a N2D Series machine on Google Cloud's Compute Engine, with 224 vCPUs and 896GBs of memory. The total compute time was around 140 hours.


The results are summarized in \cref{fig:gap}, which contains histograms of the GAPs across an ensemble of 100 instances (for varying three-month periods in the data), separately for each forecasting algorithm. The average GAP is 0.68 on instances with Prophet forecasts, 0.58 on instances with ARIMA forecasts, and 0.53 on instances with Exponential Smoothing forecasts. Because the average GAPs are large with all three forecasting methods, our algorithm performs close to the better one of the Prediction algorithm and the Mirror Descent Algorithm, showcasing its robustness to the unknown prediction accuracy.

\begin{figure}[h!]
	\captionsetup[subfigure]{font=scriptsize,labelfont=scriptsize}
	\centering
	\begin{subfigure}{0.32\textwidth}
		\centering
		\includegraphics[width=\linewidth]{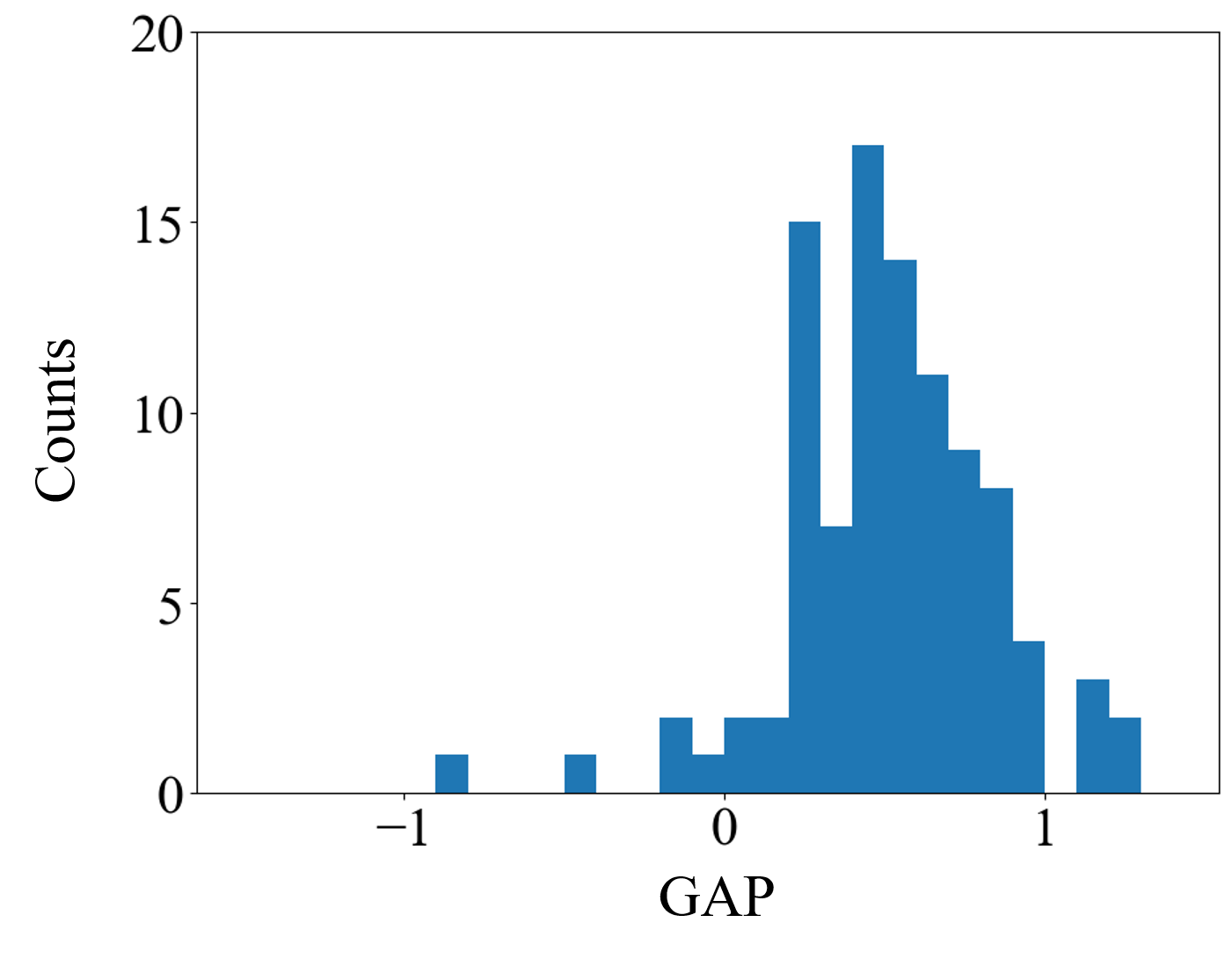}
		\caption{GAPs of Prophet}
	\end{subfigure}
	\begin{subfigure}{0.32\textwidth}
		\centering
		\includegraphics[width=\linewidth]{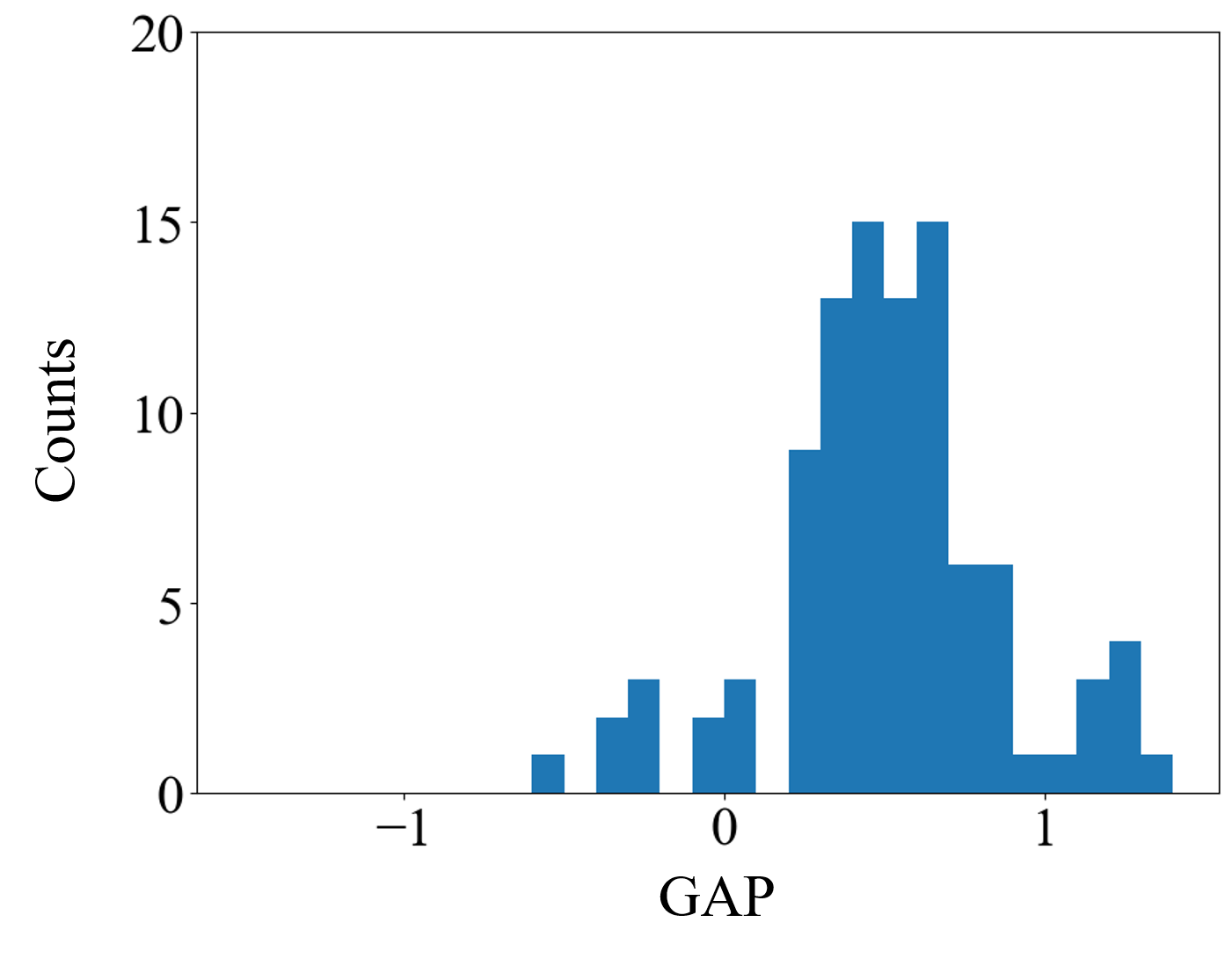}
		\caption{GAPs of ARIMA }
	\end{subfigure}
	\begin{subfigure}{0.32\textwidth}
		\centering
		\includegraphics[width=\linewidth]{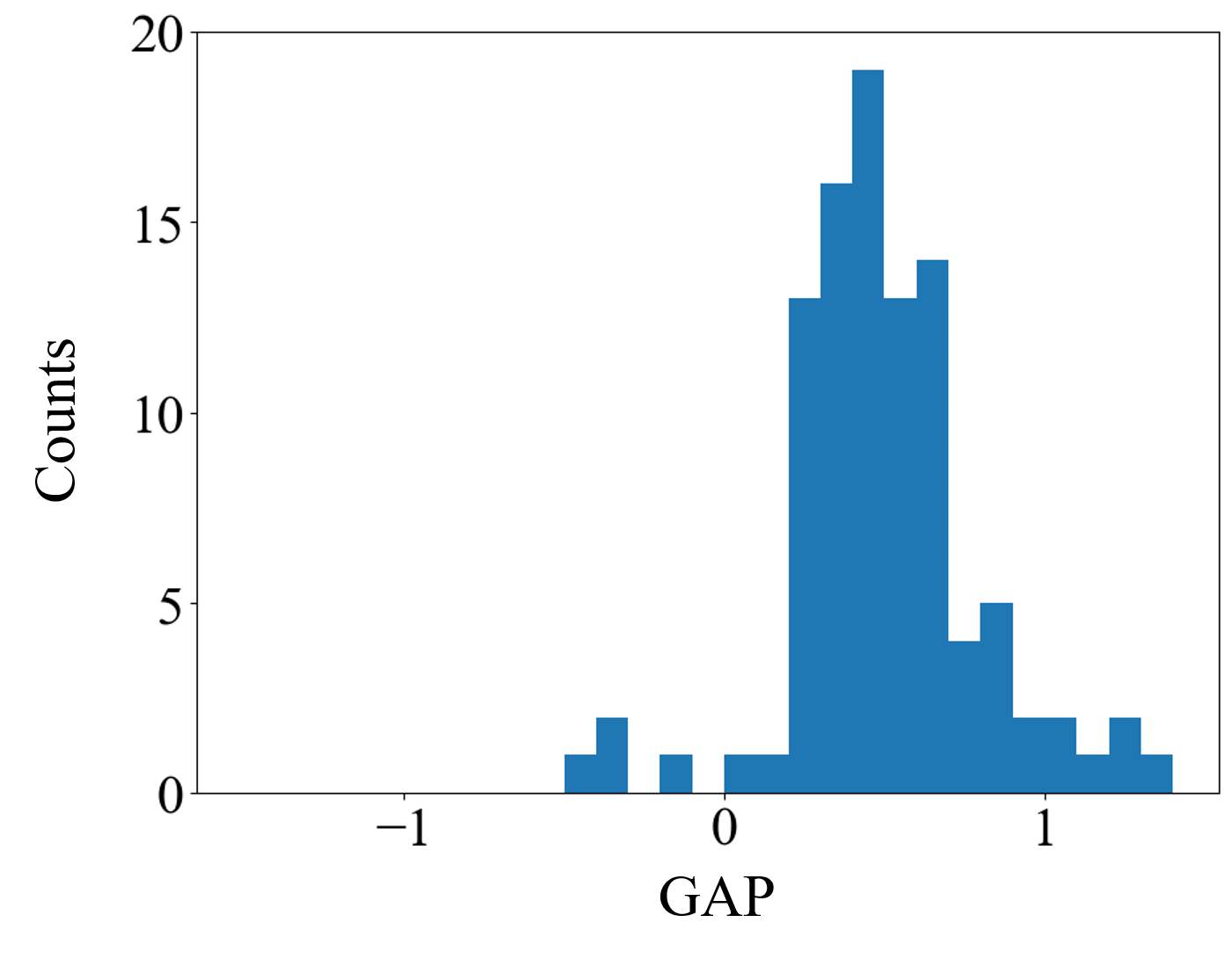}
		\caption{GAPs of Exponential Smoothing}
	\end{subfigure}  \caption{Histograms of GAPs with different forecasting methods, each containing 100 instances.}
	\label{fig:gap}
\end{figure}

\section{Conclusion}\label{Section conclusion}
In this paper, we proposed a new model incorporating predictions into the Online Resource Allocation Problem. With the notion of prediction, we first gave two separate algorithms for the stochastic arrival model and the adversarial arrival model. Under the stochastic arrival model the respective algorithm achieves nearly optimal minimax worst-cast regret, and under the adversarial arrival model the respective algorithm obtains nearly optimal amount of reward. Both algorithms do not need to know the prediction quality beforehand. We then built on these two algorithms and proposed our main algorithm, which achieves the above-mentioned performance under respective arrival models without knowing the underlying arrival model and the prediction quality a priori. The main idea behind our algorithm is to first assume the arrivals are stochastic, while keeps running hypothesis tests on the arrivals to see if the arrivals are adversarial instead.

\bibliographystyle{apalike}

\begin{thebibliography}{99}

\bibitem[Agrawal et al.(2014)]{agrawal2014dynamic}
Agrawal S, Wang Z, Ye Y (2014) A dynamic near-optimal algorithm for online linear programming. \emph{Operations Research} 62(4):876--890.

\bibitem[An et al.(2023)]{an2023nonstationary}
An L, Li AA, Moseley B, Ravi R (2023) The nonstationary newsvendor with (and without) predictions.

\bibitem[Antoniadis et al.(2020)]{antoniadis2020secretary}
Antoniadis A, Gouleakis T, Kleer P, Kolev P (2020) Secretary and online matching problems with machine learned advice. \emph{Advances in Neural Information Processing Systems} 33:7933--7944.

\bibitem[Arlotto \& Gurvich(2019)]{arlotto2019uniformly}
Arlotto A, Gurvich I (2019) Uniformly bounded regret in the multisecretary problem. \emph{Stochastic Systems} 9(3):231--260.

\bibitem[Ball \& Queyranne(2009)]{ball2009toward}
Ball MO, Queyranne M (2009) Toward robust revenue management: Competitive analysis of online booking. \emph{Operations Research} 57(4):950--963.

\bibitem[Balseiro et al.(2022)]{balseiro2022single}
Balseiro S, Kroer C, Kumar R (2022) Single-leg revenue management with advice. \emph{arXiv preprint arXiv:2202.10939} .

\bibitem[Balseiro \& Gur(2019)]{balseiro2019learning}
Balseiro SR, Gur Y (2019) Learning in repeated auctions with budgets: Regret minimization and equilibrium. \emph{Management Science} 65(9):3952--3968.

\bibitem[Balseiro et al.(2023)]{balseiro2023best}
Balseiro SR, Lu H, Mirrokni V (2023) The best of many worlds: Dual mirror descent for online allocation problems. \emph{Operations Research} 71(1):101--119.

\bibitem[Bauschke et al.(2001)]{bauschke2001essential}
Bauschke HH, Borwein JM, Combettes PL (2001) Essential smoothness, essential strict convexity, and legendre functions in banach spaces. \emph{Communications in Contemporary Mathematics} 3(04):615--647.

\bibitem[Beck \& Teboulle(2003)]{beck2003mirror}
Beck A, Teboulle M (2003) Mirror descent and nonlinear projected subgradient methods for convex optimization. \emph{Operations Research Letters} 31(3):167--175.

\bibitem[Bertsekas(1997)]{bertsekas1997nonlinear}
Bertsekas DP (1997) Nonlinear programming. \emph{Journal of the Operational Research Society} 48(3):334--334.

\bibitem[Bertsekas(2014)]{bertsekas2014constrained}
Bertsekas DP (2014) \emph{Constrained optimization and Lagrange multiplier methods} (Academic press).

\bibitem[Bubeck et al.(2015)]{bubeck2015convex}
Bubeck S, et al. (2015) Convex optimization: Algorithms and complexity. \emph{Foundations and Trends{\textregistered} in Machine Learning} 8(3-4):231--357.
	
\bibitem[Buchbinder et al.(2007)]{buchbinder2007online}
Buchbinder N, Jain K, Naor J (2007) Online primal-dual algorithms for maximizing ad-auctions revenue. \emph{European Symposium on Algorithms}, 253--264 (Springer).

\bibitem[Carmon \& Hinder(2022)]{carmon2022making}
Carmon Y, Hinder O (2022) Making sgd parameter-free. \emph{Conference on Learning Theory}, 2360--2389 (PMLR).

\bibitem[Chaudhuri et al.(2009)]{chaudhuri2009parameter}
Chaudhuri K, Freund Y, Hsu DJ (2009) A parameter-free hedging algorithm. \emph{Advances in neural information processing systems} 22.

\bibitem[Cutkosky(2019)]{cutkosky2019artificial}
Cutkosky A (2019) Artificial constraints and hints for unbounded online learning. \emph{Conference on Learning Theory}, 874--894 (PMLR).

\bibitem[Cutkosky \& Boahen(2017)]{cutkosky2017online}
Cutkosky A, Boahen K (2017) Online learning without prior information. \emph{Conference on learning theory}, 643--677 (PMLR).

\bibitem[Cutkosky \& Orabona(2018)]{cutkosky2018black}
Cutkosky A, Orabona F (2018) Black-box reductions for parameter-free online learning in banach spaces. \emph{Conference On Learning Theory}, 1493--1529 (PMLR).

\bibitem[Devanur \& Hayes(2009)]{devanur2009adwords}
Devanur NR, Hayes TP (2009) The adwords problem: online keyword matching with budgeted bidders under random permutations. \emph{Proceedings of the 10th ACM conference on Electronic commerce}, 71--78.

\bibitem[Devanur et al.(2011)]{devanur2011near}
Devanur NR, Jain K, Sivan B, Wilkens CA (2011) Near optimal online algorithms and fast approximation algorithms for resource allocation problems. \emph{Proceedings of the 12th ACM conference on Electronic commerce}, 29--38.

\bibitem[D{\"u}tting et al.(2023)]{dutting2023prophet}
D{\"u}tting P, Gergatsouli E, Rezvan R, Teng Y, Tsigonias-Dimitriadis A (2023) Prophet secretary against the online optimal. \emph{arXiv preprint arXiv:2305.11144} .

\bibitem[D{\"u}tting et al.(2021)]{dutting2021secretaries}
D{\"u}tting P, Lattanzi S, Paes Leme R, Vassilvitskii S (2021) Secretaries with advice. \emph{Proceedings of the 22nd ACM Conference on Economics and Computation}, 409--429.

\bibitem[Edelman et al.(2007)]{edelman2007internet}
Edelman B, Ostrovsky M, Schwarz M (2007) Internet advertising and the generalized second-price auction: Selling billions of dollars worth of keywords. \emph{American economic review} 97(1):242--259.

\bibitem[Feldman et al.(2010)]{feldman2010online}
Feldman J, Henzinger M, Korula N, Mirrokni VS, Stein C (2010) Online stochastic packing applied to display ad allocation. \emph{European Symposium on Algorithms}, 182--194 (Springer).

\bibitem[Gallego et al.(2004)]{gallego2004managing}
Gallego G, Iyengar G, Phillips R, Dubey A (2004) Managing flexible products on a network. \emph{Available at SSRN 3567371} .

\bibitem[Golrezaei et al.(2023)]{golrezaei2023online}
Golrezaei N, Jaillet P, Zhou Z (2023) Online resource allocation with convex-set machine-learned advice. \emph{arXiv preprint arXiv:2306.12282} .

\bibitem[Gupta \& Molinaro(2016)]{gupta2016experts}
Gupta A, Molinaro M (2016) How the experts algorithm can help solve lps online. \emph{Mathematics of Operations Research} 41(4):1404--1431.

\bibitem[Hazan et al.(2016)]{hazan2016introduction}
Hazan E, et al. (2016) Introduction to online convex optimization. \emph{Foundations and Trends{\textregistered} in Optimization} 2(3-4):157--325.

\bibitem[Hu et al.(2024)]{hu2024constrained}
Hu P, Jiang J, Lyu G, Su H (2024) Constrained online two-stage stochastic optimization: Algorithm with (and without) predictions. \emph{arXiv preprint arXiv:2401.01077} .

\bibitem[Jin \& Ma(2022)]{jin2022online}
Jin B, Ma W (2022) Online bipartite matching with advice: Tight robustness-consistency tradeoffs for the two-stage model. \emph{Advances in Neural Information Processing Systems} 35:14555--14567.

\bibitem[Kesselheim et al.(2014)]{kesselheim2014primal}
Kesselheim T, T{\"o}nnis A, Radke K, V{\"o}cking B (2014) Primal beats dual on online packing lps in the random-order model. \emph{Proceedings of the forty-sixth annual ACM symposium on Theory of computing}, 303--312.

\bibitem[Lattanzi et al.(2020)]{lattanzi2020online}
Lattanzi S, Lavastida T, Moseley B, Vassilvitskii S (2020) Online scheduling via learned weights. \emph{Proceedings of the Fourteenth Annual ACM-SIAM Symposium on Discrete Algorithms}, 1859--1877 (SIAM).

\bibitem[Lavastida et al.(2021)]{lavastida2021using}
Lavastida T, Moseley B, Ravi R, Xu C (2021) Using predicted weights for ad delivery. \emph{SIAM Conference on Applied and Computational Discrete Algorithms (ACDA21)}, 21--31 (SIAM).

\bibitem[Lu(2019)]{lu2019relative}
Lu H (2019) “relative continuity” for non-lipschitz nonsmooth convex optimization using stochastic (or deterministic) mirror descent. \emph{INFORMS Journal on Optimization} 1(4):288--303.

\bibitem[Lu et al.(2018)]{lu2018relatively}
Lu H, Freund RM, Nesterov Y (2018) Relatively smooth convex optimization by first-order methods, and applications. \emph{SIAM Journal on Optimization} 28(1):333--354.

\bibitem[Luce(2012)]{luce2012individual}
Luce RD (2012) \emph{Individual choice behavior: A theoretical analysis} (Courier Corporation).

\bibitem[Lykouris \& Vassilvitskii(2021)]{lykouris2021competitive}
Lykouris T, Vassilvitskii S (2021) Competitive caching with machine learned advice. \emph{Journal of the ACM (JACM)} 68(4):1--25.

\bibitem[Makridakis \& Hibon(2000)]{makridakis2000m3}
Makridakis S, Hibon M (2000) The m3-competition: results, conclusions and implications. \emph{International journal of forecasting} 16(4):451--476.

\bibitem[McFadden et al.(1973)]{mcfadden1973conditional}
McFadden D, et al. (1973) Conditional logit analysis of qualitative choice behavior .

\bibitem[Mehta et al.(2007)]{mehta2007adwords}
Mehta A, Saberi A, Vazirani U, Vazirani V (2007) Adwords and generalized online matching. \emph{Journal of the ACM (JACM)} 54(5):22--es.

\bibitem[Mhammedi \& Koolen(2020)]{mhammedi2020lipschitz}
Mhammedi Z, Koolen WM (2020) Lipschitz and comparator-norm adaptivity in online learning. \emph{Conference on Learning Theory}, 2858--2887 (PMLR).

\bibitem[Mirrokni et al.(2012)]{mirrokni2012simultaneous}
Mirrokni VS, Gharan SO, Zadimoghaddam M (2012) Simultaneous approximations for adversarial and stochastic online budgeted allocation. \emph{Proceedings of the twenty-third annual ACM-SIAM symposium on Discrete Algorithms}, 1690--1701 (SIAM).

\bibitem[Munoz \& Vassilvitskii(2017)]{munoz2017revenue}
Munoz A, Vassilvitskii S (2017) Revenue optimization with approximate bid predictions. \emph{Advances in Neural Information Processing Systems} 30.

\bibitem[Nemirovskij \& Yudin(1983)]{nemirovskij1983problem}
Nemirovskij AS, Yudin DB (1983) Problem complexity and method efficiency in optimization .

\bibitem[Orabona(2013)]{orabona2013dimension}
Orabona F (2013) Dimension-free exponentiated gradient. \emph{Advances in Neural Information Processing Systems} 26.

\bibitem[Orabona \& Cutkosky(2020)]{francesco2020ICML}
Orabona F, Cutkosky A (2020) Icml tutorial on parameter-free stochastic optimization, \url{https://parameterfree.com/icml-tutorial/}, iCML.

\bibitem[Purohit et al.(2018)]{purohit2018improving}
Purohit M, Svitkina Z, Kumar R (2018) Improving online algorithms via ml predictions. \emph{Advances in Neural Information Processing Systems} 31.

\bibitem[Rohatgi(2020)]{rohatgi2020near}
Rohatgi D (2020) Near-optimal bounds for online caching with machine learned advice. \emph{Proceedings of the Fourteenth Annual ACM-SIAM Symposium on Discrete Algorithms}, 1834--1845 (SIAM).

\bibitem[Streeter \& McMahan(2012)]{streeter2012no}
Streeter M, McMahan HB (2012) No-regret algorithms for unconstrained online convex optimization. \emph{arXiv preprint arXiv:1211.2260} .

\bibitem[Talluri \& Van Ryzin(2006)]{talluri2006theory}
Talluri KT, Van Ryzin GJ (2006) \emph{The theory and practice of revenue management}, volume~68 (Springer Science \& Business Media).

\bibitem[Taylor \& Letham(2018)]{taylor2018forecasting}
Taylor SJ, Letham B (2018) Forecasting at scale. \emph{The American Statistician} 72(1):37--45.

\bibitem[Varian(2007)]{varian2007position}
Varian HR (2007) Position auctions. \emph{international Journal of industrial Organization} 25(6):1163--1178.

	
\end{thebibliography}

\newpage

\appendix
	
\section{Preliminary Results and Proofs in \cref{Section formulation}}\label{Appendix A}

We first state two structural results regarding the duality of the offline problem.
\begin{lemma}[Weak Duality]\label{lemma:weak duality}
	$\textup{OPT}(\gamma)\leq D(\mu\mid\gamma)$ for every $\mu\in\mathbb{R}^m_+$.
\end{lemma}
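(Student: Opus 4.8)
The plan is to prove this via the standard Lagrangian relaxation argument: the dual function evaluated at any fixed $\mu \geq 0$ upper-bounds the constrained primal optimum because $\mu$ prices out the resource constraint with a non-negative multiplier. Concretely, I would start by invoking the existence of an optimal primal solution $(x_1^*,\dots,x_T^*)$ achieving $\text{OPT}(\vec\gamma)$, so that each $x_t^* \in \mathcal{X}_t$ and the resource constraint $\sum_{t=1}^T g_t(x_t^*) \leq \rho T$ holds component-wise.

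The first key step is to exploit feasibility together with $\mu \geq 0$. Since $\rho T - \sum_{t=1}^T g_t(x_t^*) \geq 0$ coordinate-wise and $\mu \in \mathbb{R}^m_+$, the inner product $\mu^\top\!\left(\rho T - \sum_{t=1}^T g_t(x_t^*)\right)$ is non-negative. Adding this non-negative quantity to $\text{OPT}(\vec\gamma) = \sum_{t=1}^T r_t(x_t^*)$ can only increase it, giving
\begin{equation*}
\text{OPT}(\vec\gamma) \;\leq\; \sum_{t=1}^T \bigl(r_t(x_t^*) - \mu^\top g_t(x_t^*)\bigr) + \mu^\top \rho T.
\end{equation*}
The second key step is to relax each summand by its supremum over the feasible action set: for every $t$ we have $r_t(x_t^*) - \mu^\top g_t(x_t^*) \leq \sup_{x \in \mathcal{X}_t}\{r_t(x) - \mu^\top g_t(x)\} = r_t^*(\mu)$ by the definition in \cref{eqn:conjugate}. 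Summing over $t$ and recognizing the definition of $D(\mu\mid\vec\gamma)$ from \cref{eqn:dual} yields $\text{OPT}(\vec\gamma) \leq \sum_{t=1}^T r_t^*(\mu) + \mu^\top \rho T = D(\mu\mid\vec\gamma)$, as desired.

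Honestly, there is no real obstacle here: this is a textbook weak-duality inequality, and the argument holds with no convexity assumptions precisely because we only relax upward (we never invoke strong duality or complementary slackness). The one point worth stating carefully is that the inequality must hold for \emph{every} $\mu \in \mathbb{R}^m_+$ simultaneously, but since $\mu$ is fixed throughout and enters only through the non-negativity of the penalty term and through the pointwise supremum bound, the derivation goes through verbatim for each such $\mu$. I would present the three displayed lines in the order above, noting where each definition and the sign condition on $\mu$ is used.
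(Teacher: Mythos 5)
Your proposal is correct and follows essentially the same route as the paper's own proof: relax the resource constraint by adding the non-negative penalty $\mu^\top(\rho T - \sum_{t} g_t(x_t))$ and then bound each term by the per-period supremum $r_t^*(\mu)$. The only cosmetic difference is that you anchor the argument at an attained optimal solution while the paper relaxes the constrained maximum directly, which changes nothing of substance.
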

\begin{lemma}[Duality Gap]\label{lemma:duality gap}
	$\min_{\mu\in\mathbb{R}^m_+}D(\mu\mid\gamma)\leq \textup{OPT}(\gamma)+(m+1)\overline{r}$.
\end{lemma}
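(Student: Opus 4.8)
The plan is to identify $\min_{\mu\ge 0} D(\mu\mid\vec\gamma)$ with the optimal value of a convex relaxation of \cref{eqn:offline opt}, and then bound the relaxation gap via the Shapley--Folkman theorem, exactly as in the proof of \cref{corollary:perfect dual}. For each arrival $t$ introduce the reward--consumption set $W_t:=\{(r_t(x),g_t(x)):x\in\mathcal{X}_t\}\subset\mathbb{R}^{1+m}$. Since $r_t(x)-\mu^\top g_t(x)$ is a linear functional of the pair $(r_t(x),g_t(x))$, the supremum defining $r_t^*(\mu)$ is unchanged upon passing to the closed convex hull: $r_t^*(\mu)=\sup_{(v_0,v)\in W_t}(v_0-\mu^\top v)=\sup_{(v_0,v)\in\overline{\mathrm{conv}}(W_t)}(v_0-\mu^\top v)$. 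Consequently $\min_{\mu\ge0}D(\mu\mid\vec\gamma)$ is precisely the Lagrangian dual of the convexified program
\begin{equation*}
\overline{\mathrm{OPT}}(\vec\gamma):=\max \sum_{t=1}^T v_{0,t}\quad\text{s.t.}\quad \sum_{t=1}^T v_t\le\rho T,\quad (v_{0,t},v_t)\in\overline{\mathrm{conv}}(W_t).
\end{equation*}

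First I would establish strong duality for this convexified program, yielding $\min_{\mu\ge0}D(\mu\mid\vec\gamma)=\overline{\mathrm{OPT}}(\vec\gamma)$. This is a linear maximization over a compact convex set: each $\overline{\mathrm{conv}}(W_t)$ is bounded (rewards at most $\bar r$, consumptions at most $\bar g$) hence compact, so the maximum is attained, and the void action guarantees $(0,0)\in W_t$, so taking every $v_t=0$ gives $\sum_t v_t=0<\rho T$ (using $\underline\rho>0$). Slater's condition therefore holds and strong duality follows.

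Next I would apply Shapley--Folkman to close the relaxation gap. Let $(v_{0,t}^*,v_t^*)$ attain $\overline{\mathrm{OPT}}(\vec\gamma)$ and set $z^*:=\sum_t(v_{0,t}^*,v_t^*)\in\sum_t\overline{\mathrm{conv}}(W_t)=\overline{\mathrm{conv}}\big(\sum_t W_t\big)$. Working in dimension $1+m$, Shapley--Folkman produces another representation $z^*=\sum_t(w_{0,t},w_t)$ with $(w_{0,t},w_t)\in\overline{\mathrm{conv}}(W_t)$ for all $t$, and $(w_{0,t},w_t)\in W_t$ --- i.e.~a genuine action $x_t\in\mathcal{X}_t$ --- for all but at most $m+1$ indices. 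Crucially the aggregate is preserved: $\sum_t w_{0,t}=\overline{\mathrm{OPT}}(\vec\gamma)$ and $\sum_t w_t\le\rho T$. I then replace the at most $m+1$ remaining ``fractional'' coordinates by the void action $(0,0)$; because consumptions are nonnegative this only relaxes the resource constraint, so feasibility of \cref{eqn:offline opt} is preserved, while the total reward drops by $\sum_{\text{frac}}w_{0,t}\le(m+1)\bar r$ (each $w_{0,t}$ being a convex combination of reward values bounded by $\bar r$).

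This yields a genuine feasible solution of \cref{eqn:offline opt} with reward at least $\overline{\mathrm{OPT}}(\vec\gamma)-(m+1)\bar r$, hence $\mathrm{OPT}(\vec\gamma)\ge\min_{\mu\ge0}D(\mu\mid\vec\gamma)-(m+1)\bar r$, which is the claim (and is consistent with weak duality, \cref{lemma:weak duality}, sandwiching the gap). I expect the main obstacle to be the convexification step: one must verify carefully that passing to $\overline{\mathrm{conv}}(W_t)$ leaves $r_t^*$, and hence $D$, unchanged, and that strong duality is genuinely available for the relaxed program under only the stated semicontinuity and compactness assumptions. The Shapley--Folkman rounding is then routine, with the dimension count $1+m$ producing exactly the stated constant $(m+1)\bar r$.
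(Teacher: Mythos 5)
Your argument is correct and is essentially the approach the paper intends: the paper does not write out a proof of this lemma but defers to the Shapley--Folkman theorem via Proposition 5.26 of \cite{bertsekas2014constrained}, and its own proof of \cref{corollary:perfect dual} runs the same convexify--Slater--Shapley--Folkman pipeline, just phrased through concavified rewards $\tilde r_t$ and convexified consumptions $\tilde g_t$ on $\mathrm{conv}(\mathcal{X}_t)$ rather than through the reward--consumption sets $W_t$ in the $(1+m)$-dimensional image space. Your image-space formulation is a clean equivalent that yields the dimension count $m+1$ directly; the only point to be slightly careful about (which you flag, and which the paper's own treatment shares) is that Shapley--Folkman applies to $\mathrm{conv}(\sum_t W_t)$ rather than its closure, handled here by the paper's standing assumption that the suprema in \cref{eqn:conjugate} are attained.
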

\cref{lemma:weak duality} is the standard weak duality result. \cref{lemma:duality gap} states that, even without any convexity assumptions, the duality gap of our problem is upper bounded by a constant that is independent from the time horizon $T$. This can be shown via Shapley-Folkman Theorem (see Proposition 5.26 of \cite{bertsekas2014constrained} for a detailed explanation).
\proof{Proof of \cref{lemma:weak duality}.}
This proof appears in \cite{balseiro2023best}. We include it for the sake of completeness.
It holds for any $\mu \in \mathbb{R}_{+}^m$ that
$$
\begin{aligned}
	\text{OPT}(\gamma) & =\left[\arraycolsep=1.5pt\def\arraystretch{1.5}\begin{array}{cc}
		\max _{x_t \in \mathcal{X}_t} & \sum_{t=1}^T r_t\left(x_t\right) \\
		\text { s.t. } & \sum_{t=1}^T g_t\left(x_t\right) \leq \rho T 
	\end{array}\right] \\
	& \leq \max _{x_t \in \mathcal{X}}\left\{\sum_{t=1}^T r_t\left(x_t\right)+\mu^{\top} \rho T -\mu^{\top} \sum_{t=1}^T g_t\left(x_t\right)\right\} \\
	& =\sum_{t=1}^T r_t^*(\mu)+T \mu^{\top} \rho T \\
	& =D(\mu\mid \gamma),
\end{aligned}
$$
where the first inequality is because we relax the constraint $\sum_{t=1}^T g_t\left(x_t\right) \leq\rho T $ and $\mu \geq 0$, and the last equality utilizes the definition of $r^*(\cdot)$.
\endproof

\vspace{1em}
\proof{Proof of \cref{corollary:perfect dual}.}
Let $\text{conv}(\mathcal{X}_t)\subset\mathbb{R}^d_+$ denote the convex hull of $\mathcal{X}_t$. For each $t$, define the function $\tilde{r}_t:\text{conv}(\mathcal{X}_t)\to\mathbb{R}_+$ by $$\tilde{r}_t(\tilde{x})=\sup \left\{\sum_{k=1}^{d+1} \alpha^k r_t\left(x^k\right) \mid \tilde{x}=\sum_{k=1}^{d+1} \alpha^k x^k, x^k \in \mathcal{X}_t, \sum_{k=1}^{d+1} \alpha^k=1, \alpha^k \geq 0\right\}\qquad \forall \tilde{x}\in \text{conv}(\mathcal{X}_t).$$
$\tilde{r}_t$ is concave regardless of whether $r_t$ is concave or not, and it can be viewed as a ``concavification'' of $r_t$ on $\text{conv}(\mathcal{X}_t)$. Similarly, for each $t$, define the function $\tilde{g}_t:\text{conv}(\mathcal{X}_t)\to\mathbb{R}_+^m$ by $$\tilde{g}_t(\tilde{x})=\inf \left\{\sum_{k=1}^{d+1} \alpha^k g_t\left(x^k\right) \mid \tilde{x}=\sum_{k=1}^{d+1} \alpha^k x^k, x^k \in \mathcal{X}_t, \sum_{k=1}^{d+1} \alpha^k=1, \alpha^k \geq 0\right\}\qquad \forall \tilde{x}\in \text{conv}(\mathcal{X}_t).$$ $\tilde{g}_t$ is convex regardless of whether $g_t$ is convex or not. 

Let $(\text{P})$ denote the optimization problem in \cref{eqn:offline opt}. Consider the following convex relaxation $(\tilde{\text{P}})$ of the optimization problem in \cref{eqn:offline opt}:
\begin{equation*}
	\max_{x_t\in\text{conv}(\mathcal{X}_t)} \sum_{t=1}^T \tilde{r}_t(\tilde{x}_t) \quad\text{ s.t. } \quad \sum_{t=1}^T \tilde{g}_t(\tilde{x}_t)\leq \rho T;\end{equation*} 
and its Lagrangian dual problem $(\tilde{\text{D}})$:
\begin{equation*}
	\min_{\mu\in\mathbb{R}_+^m}\sum_{t=1}^T\tilde{r}^*_t(\tilde{x}_t)+\mu^\top\rho T \quad \text{ where }\quad r^*_t(\mu)=\sup_{x\in\text{conv}(\mathcal{X}_t)}\{r_t(\tilde{x})-\mu^\top \tilde{g}_t(\tilde{x})\}.
\end{equation*} 
Because $0\in\text{conv}(\mathcal{X}_t)$, $\tilde{g}_t(0)=0$ for all $t$, and $\rho>0$, $(\tilde{\text{P}})$ satisfies Slater's condition. Therefore by strong duality $\sup(\tilde{\text{P}})=\inf(\tilde{\text{D}})$. By an application of the Sharpley-Folkman Theorem (\cite{bertsekas2014constrained}, Proposition 5.26), there exists an optimal solution $\{\tilde{x}^*_t\}_{t=1}^T$ of $(\tilde{\text{P}})$ with the following property: let $I\subset [T]$ be the set of time periods where $\tilde{x}^*_t\notin\mathcal{X}_t$ for $t\in I$, then $|I|\leq m+1$ and $\sum_{t\in [T]\setminus I}g_t(\tilde{x}^*_t)\leq \rho T$.  Let $\tilde{\mu}^*$ be the optimal dual variable of $(\tilde{\text{D}})$ that induces $\{\tilde{x}^*_t\}_{t=1}^T$, and let $\{x^{\tilde{\mu}^*}_t\}_{t=1}^T$ be the actions induced by $\tilde{\mu}^*$ in the original primal $(\text{P})$. We prove that $\sum_{t=1}^Tr_t(x^{\tilde{\mu}^*}_t)\geq \sum_{t=1}^Tr_t(\tilde{x}^*_t)-(\bar{g}/\underline{g}+1)(m+1)\bar{r}$.

Let $S\subset [T]$ be the set of time periods such that $x^{\tilde{\mu}^*}_s\neq\tilde{x}^*_s$ for $s\in S$, and let $J=S\setminus I$. Then $J$ is the set of time periods where the resource constraint becomes active when choosing the action induced by $\tilde{\mu}^*$. Because $|I|\leq m+1$ and $x^{\tilde{\mu}^*}_t=\tilde{x}^*_t$ for $t\in [T]\setminus S$, $\sum_{t\in J}g_t(\tilde{x}^*_t)-\sum_{t\in J}g_t(x^{\tilde{\mu}^*}_t)\leq(m+1)\bar{g}$. Therefore $|\{t\in J:\tilde{x}^*_t\neq 0\}|\leq (m+1)\bar{g}/\underline{g}$, so $\sum_{t\in J}r_t(\tilde{x}^*_t)-\sum_{t\in J}r_t(x^{\tilde{\mu}^*}_t)\leq (m+1)\bar{r}\bar{g}/\underline{g}$. Further, we also have $\sum_{t\in I}r_t(\tilde{x}^*_t)-\sum_{t\in I}r_t(x^{\tilde{\mu}^*}_t)\leq (m+1)\bar{r}$ and $x^{\tilde{\mu}^*}_t=\tilde{x}^*_t$ for $t\in [T]\setminus S$. These together gives $\sum_{t=1}^Tr_t(x^{\tilde{\mu}^*}_t)\geq \sum_{t=1}^Tr_t(\tilde{x}^*_t)-(\bar{g}/\underline{g}+1)(m+1)\bar{r}$. Finally, since $(\tilde{\text{P}})$ is a relaxation of $(\text{P})$, $\sup(\tilde{\text{P}})=\sum_{t=1}^Tr_t(\tilde{x}^*_t)\geq\text{OPT}(\gamma)$, so we have $\max_{\mu\in\mathbb{R}_+^m}R(\textup{GRD}_{\mu}\mid\gamma)\geq \sum_{t=1}^Tr_t(x^{\tilde{\mu}^*}_t)\geq \text{OPT}(\gamma)-(\bar{g}/\underline{g}+1)(m+1)\bar{r}$. 
\endproof

We make the following observation, which follows since \cref{corollary:perfect dual} shows there exists a perfect dual variable for every arrival sequence.
\begin{observation}\label{obs:stationary}
	If an arrival sequence $\gamma$ is \textit{$(\lambda,\delta)$-stationary}, then \begin{equation*}
		\min_{k=1,\dots,\lfloor\frac{1}{\delta}\rfloor}(\text{OPT}(\gamma_{1:k\delta T})+\text{OPT}(\gamma_{k\delta T+1:T}))\geq (1-\lambda)\text{OPT}(\gamma).
	\end{equation*}
\end{observation}

\vspace{1em}
\proof{Proof of \cref{prop:iid stationary}.}
Fix any $\delta>0$. If $R(\textup{GRD}_{\mu}\mid\gamma)=o(T)$, then since the total amount of available resources $\rho T$ scales linearly in $T$ and every single action consumes constants amount of resources, the Dual-Adjusted Greedy Algorithm with dual variable $\mu$ never depletes resources. Therefore $R(\textup{GRD}_{\mu}\mid\gamma_{1:k\delta T})+R(\textup{GRD}_{\mu}\mid\gamma_{k\delta T+1:T})= R(\textup{GRD}_{\mu}\mid\gamma)$ for every $1\leq k\leq\lfloor\frac{1}{\delta}\rfloor$, which shows $\gamma$ is $(\delta,\lambda)$-stationary for every $\lambda>0$. From now on we assume that $R(\textup{GRD}_{\mu}\mid\gamma)=\Theta(T)$.

For any time periods $s,t$ and any amount of resources $\rho'T\in\mathbb{R}^m_+$, we use $R(\textup{GRD}_{\mu}\mid\gamma_{s:t},\rho'T)$ to denote the amount of reward obtained by the Dual-Adjusted Greedy Algorithm with dual variable $\mu$ on the $\gamma_{s:t}$-subproblem \textit{with available amount of resources $\rho' T$}. 

Fix an integer $1\leq k\leq\lfloor\frac{1}{\delta}\rfloor$. If $k\delta T= o(T)$, then \begin{eqnarray*}
	R(\textup{GRD}_{\mu}\mid\gamma_{k\delta T+1:T})&\geq &R(\textup{GRD}_{\mu}\mid\gamma_{k\delta T+1:T},\rho T)-k\delta T(m\bar{g}\bar{r}/\underline{g})\\
	&\geq &R(\textup{GRD}_{\mu}\mid\gamma)-k\delta T\bar{r}-k\delta T(m\bar{g}\bar{r}/\underline{g}),
\end{eqnarray*}
where the first inequality follows since any algorithm can consume at most $m\bar{g}$ amount of resources in $\ell_1$-norm in a single time period, which can be translated to at most $m\bar{g}\bar{r}/\underline{g}$ amount of reward; the second inequality follows since any algorithm can obtain at most $k\delta T\bar{r}$ amount of rewards in the first $k\delta T\bar{r}$ time periods. Since $k\delta T\in o(T)$, this shows $R(\textup{GRD}_{\mu}\mid\gamma_{1:k\delta T})+R(\textup{GRD}_{\mu}\mid\gamma_{k\delta T+1:T}))\geq R(\textup{GRD}_{\mu}\mid\gamma)-o(T)$. Similarly, we can also show this if $T-k\delta T= o(T)$.

Suppose $k\delta T, T-k\delta T=\Theta(T) $. Let $\rho'T\leq \rho T$ be the amount of resources that is consumed by the Dual-Adjusted Greedy Algorithm with dual variable $\mu$ and arrivals $\gamma$, then $R(\textup{GRD}_{\mu}\mid\gamma)=R(\textup{GRD}_{\mu}\mid\gamma,\rho'T)$ and $R(\textup{GRD}_{\mu}\mid\gamma_{1:k\delta T})=R(\textup{GRD}_{\mu}\mid\gamma_{1:k\delta T},k\delta\rho' T)$. Condition on $\gamma\sim\mathcal{P}^T$, for each time period $t$, let $X_t\in\mathbb{R}^m_+$ be the  amount of resources consumed at time period $t$. Then $X_t$'s are independent with $\mathbb{E}[X_t]=\rho'$ and $0\leq ||X_t||_{\infty}\leq \bar{g}$. By Hoeffding's inequality we have
\begin{eqnarray*}
	\mathbb{P}_{\gamma\sim\mathcal{P}^T}\left(\sum_{t=1}^{k\delta T} (X_t)_{m'}-k\delta\rho'_{m'} T\geq \bar{g}^2\sqrt{k\delta T\log T}\right)\leq \frac{1}{T^2},
\end{eqnarray*} where $(X_t)_{m'}$ denotes the $m'$-th coordinate of $X_t$. So by union bound we have \begin{equation}\label{eqn:prop2,1}
	\mathbb{P}_{\gamma\sim\mathcal{P}^T}\left(\sum_{t=1}^{k\delta T} X_t-k\delta\rho'T\geq \bar{g}^2\sqrt{k\delta T\log T}\right)\leq \frac{m}{T^2}.
\end{equation} Therefore \begin{eqnarray*}
	&\quad&k\delta\mathbb{E}_{\gamma\sim\mathcal{P}^T}\left[R\left(\textup{GRD}_{\mu}\mid\gamma\right)\right]-\mathbb{E}_{\gamma\sim\mathcal{P}^T}\left[R(\textup{GRD}_{\mu}\mid\gamma_{1:k\delta T})\right]\\
	&=&\mathbb{E}_{\gamma\sim\mathcal{P}^T}\left[R\left(\textup{GRD}_{\mu}\mid\gamma_{1:k\delta T},\sum_{t=1}^{k\delta T} X_t\right)\right]-\mathbb{E}_{\gamma\sim\mathcal{P}^T}\left[R(\textup{GRD}_{\mu}\mid\gamma_{1:k\delta T},k\delta\rho' T)\right]\\
	&\leq&\left(1-\frac{m}{T^2}\right)\mathbb{E}_{\gamma\sim\mathcal{P}^T}\left[R\left(\textup{GRD}_{\mu}\mid\gamma_{1:k\delta T},k\delta\rho' T+\bar{g}^2\sqrt{k\delta T\log T})-R(\textup{GRD}_{\mu}\mid\gamma_{1:k\delta T},k\delta\rho' T)\right)\right]+\frac{m}{T^2}\cdot \bar{r}k\delta T\\
	&\leq&\bar{r}\bar{g}^2\sqrt{k\delta T\log T})/\underline{g}+ m\bar{r}k\delta/T\\
	&=&o(T),
\end{eqnarray*}
where the first inequality follows by conditioning on two cases given by Eq. (\ref{eqn:prop2,1}) and noticing that any algorithm can obtain at most $k\delta T\bar{r}$ amount of rewards in the first $k\delta T\bar{r}$ time periods, and the second inequality follows since any algorithm can consume at most $m\bar{g}$ amount of resources in $\ell_1$-norm in a single time period, which can be translated to at most $m\bar{g}\bar{r}/\underline{g}$ amount of reward. Similarly, we also have \begin{equation*}
	(1-k\delta)\mathbb{E}_{\gamma\sim\mathcal{P}^T}\left[R\left(\textup{GRD}_{\mu}\mid\gamma\right)\right]-\mathbb{E}_{\gamma\sim\mathcal{P}^T}\left[R(\textup{GRD}_{\mu}\mid\gamma_{k\delta T+1:T})\right]=o(T).
\end{equation*}
Hence $$\mathbb{E}_{\gamma\sim\mathcal{P}^T}\left[R\left(\textup{GRD}_{\mu}\mid\gamma\right)-(R(\textup{GRD}_{\mu}\mid\gamma_{1:k\delta T})+R(\textup{GRD}_{\mu}\mid\gamma_{k\delta T+1:T}))\right]=o(T).$$

Note that $R\left(\textup{GRD}_{\mu}\mid\gamma\right)-(R(\textup{GRD}_{\mu}\mid\gamma_{1:k\delta T})+R(\textup{GRD}_{\mu}\mid\gamma_{k\delta T+1:T}))$ is a function from $\mathcal{P}^T$ to $\mathbb{R}$ such that each $\gamma_t$ is drawn independently. Moreover, it satisfies the bounded differences property with bound $m\bar{g}\bar{r}/\underline{g}$ since any algorithm can consume at most $m\bar{g}$ amount of resources in $\ell_1$-norm in a single time period, which can be translated to at most $m\bar{g}\bar{r}/\underline{g}$ amount of reward. Therefore by McDiarmid's inequality we have \begin{eqnarray*}
	&\mathbb{P}_{\gamma\sim\mathcal{P}^T}&\left[R\left(\textup{GRD}_{\mu}\mid\gamma\right)-(R(\textup{GRD}_{\mu}\mid\gamma_{1:k\delta T})+R(\textup{GRD}_{\mu}\mid\gamma_{k\delta T+1:T}))\geq o(T)+(m\bar{g}\bar{r}/\underline{g})\sqrt{2T\log T/3}\right]\\&\leq&\frac{1}{T^3}.
\end{eqnarray*}
This implies $R\left(\textup{GRD}_{\mu}\mid\gamma\right)-(R(\textup{GRD}_{\mu}\mid\gamma_{1:k\delta T})+R(\textup{GRD}_{\mu}\mid\gamma_{k\delta T+1:T}))=o(T)$ with probability at least $1-\frac{1}{T^3}$ for any $1\leq k\delta T\leq T$. Since $k\delta T$ can take at most $T$ values, by union bound $\min_{k=1,\dots,\lfloor\frac{1}{\delta}\rfloor}(R(\textup{GRD}_{\mu}\mid\gamma_{1:k\delta T})+R(\textup{GRD}_{\mu}\mid\gamma_{k\delta T+1:T}))\geq R(\textup{GRD}_{\mu}\mid\gamma)-o(T)$ with probability at least $1-\frac{1}{T^2}$. Because $R(\textup{GRD}_{\mu}\mid\gamma)=\Theta(T)$, this implies $\gamma$ is $(\delta,\lambda)$-stationary for every $\lambda>0$ with probability at least $1-\frac{1}{T^2}$.
\endproof

\section{Details in Section \ref{Section main results}.1}\label{Appendix A.2.5}

For completeness, we discuss the \textit{Mirror Descent Algorithm} (MDA) given in \cite{balseiro2023best}. 

\begin{algorithm}\small
	\caption{Mirror Descent Algorithm (MDA)}
	\vspace{0.5em}
	{\bf Inputs:} Initial dual solution $\mu_1$, total time periods $T$, initial resources $G_1=\rho T$, reference function $h(\cdot): \mathbb{R}^m \rightarrow \mathbb{R}$, and step-size $\eta$\;
	\For{$t$ from $1$ to $T$}{
		Receive request $\left(r_t, g_t, \mathcal{X}_t\right)$\;
		Make the primal decision $x_t$ and update the remaining resources $G_t$:
		\centerline{$
			x_t\in\arg \max_{ x\in \mathcal{X}_t,g_t(x)\leq G_t}\left\{r_t(x)-\mu_t^{\top} g_t(x)\right\}$;}
		\centerline{$G_{t+1}\gets G_t-g_t\left(x_t\right).$}
		
		Obtain a sub-gradient of the dual function:
		\centerline{$\phi_t\gets -g_t\left(x_t\right)+\rho.$}
		
		Update the dual variable by mirror descent:
		\begin{equation}
			\mu_{t+1}\gets \arg \min _{\mu \in \mathbb{R}_{+}^m} \phi_t^{\top} \mu+\frac{1}{\eta} V_h\left(\mu, \mu_t\right),
			\label{eqn:MDA update dual}
		\end{equation}
		where $V_h(x, y):=h(x)-h(y)-\nabla h(y)^{\top}(x-y)$ is the Bregman divergence.}
	\label{alg:without predictions}
\end{algorithm}

The Mirror Descent Algorithm takes an initial dual variable, a step-size, and a {\em reference function} as inputs. At each time period $t$, 
the algorithm takes the action induced by the current dual variable $\mu_t$, and performs a first-order update on the dual variable. 
For the updating step, note we can write the dual function in \cref{eqn:dual} as $D(\mu\mid\gamma):=\sum_{t=1}^TD_t(\mu\mid\gamma)$ where the $t$-th term of the dual function is given by $D_t(\mu\mid\gamma)= r_t^*(\mu)+\mu^\top \rho $. Then it follows that $\phi_t:=-g_t\left(x_t\right)+\rho$ is a sub-gradient of $D_t(\mu\mid\gamma)$ at $\mu_t$ under our assumptions by Danskin's Theorem (see, e.g., Proposition B.25 in \cite{bertsekas1997nonlinear}), and the algorithm uses $\phi_t$ to update the dual variable by performing a mirror descent step in \cref{eqn:MDA update dual} with step-size $\eta$ and reference function $h(\cdot)$. Intuitively, the Mirror Descent Algorithm tries to find dual variables via gradient information such that these dual variables induce actions with good primal performances. For more  on mirror descent algorithms in general, see \cite{nemirovskij1983problem,beck2003mirror,hazan2016introduction,lu2018relatively}.

We state the standard assumptions on choosing the reference function $h(\cdot)$ for mirror descent algorithms \citep{beck2003mirror,bubeck2015convex,lu2018relatively,lu2019relative}. These assumptions are applicable to all algorithms in our paper.
\begin{itemize}
	\item[(a)] $h(\mu)$ is either differentiable or essentially smooth \citep{bauschke2001essential} and Lipschitz in $\mathbb{R}^m_+$;
	\item[(b)] $h(\mu)$ is $\sigma$-strongly convex with respect to the $\ell_1$-norm in $\mathbb{R}^m_+$, i.e., $h(\mu_1)\geq h(\mu_2)+\nabla h(\mu_2)^\top(\mu_1-\mu_2)+\frac{\sigma}{2}||\mu_1-\mu_2||_1^2$ for all $\mu_1,\mu_2\in\mathbb{R}^m_+$.
	\item[(c)] $h(\mu)$ coordinately-wise separable, i.e., $h(\mu)=\sum_{j=1}^m h_j(\mu_j)$ where $h_j:\mathbb{R}_+\to\mathbb{R}$ is an univariate function. Moreover, for every resource $j$ the function $h_j$ is $\sigma'$-strongly convex with respect to the $\ell_1$-norm over $[0,\mu_j^{\max}]$ where $\mu_j^{\max}:=\bar{r}/\rho_j+1$.
\end{itemize}

\section{Proofs in Section \ref{Section main results}.3}\label{Appendix A.5}
\proof{Proof of \cref{prop:lower bound of full}.}
Let $c$ be a positive integer such that $c>\max\{\frac{K}{\lambda-\lambda'},\frac{1-\lambda}{\delta}\}$. Set $\rho=1$, $\alpha^*=1/c\delta$, and $\bar{r}=(\alpha^*-1)/(1-\lambda-1/\alpha^*)$, then by our choice of $c$ we have $\lambda<1-1/\alpha^*$ and $\bar{r}>\alpha^*$. Consider two different types of arrivals $\gamma^1=(r^1,g^1,\mathcal{X})$ and $\gamma^2=(r^2,g^2,\mathcal{X})$, where $\mathcal{X}=\{0,1\}$ (one can think of this as \{reject, accept\}). Set $r^1(1)=1, g^1(1)=1, r^2(1)=\bar{r}$, and $g^2(1)=\alpha^*$.
Let $\hat{\mu}=1+1/\log(T)$ be the prediction, then following $\hat{\mu}$ means taking action 0 for $\gamma^1$ and taking action 1 for $\gamma^2$. Because $\bar{r}/\alpha^*>\hat{\mu}$, one can verify that Assumptions \ref{assumption: Adversarial Arrival Model(a)} and \ref{assumption: Adversarial Arrival Model(b)} are satisfied.

Consider the following two instances.
\begin{itemize}
	\item Instance one: the arrivals are stochastic where the state space is $\mathcal{S}=\{\gamma^1\}$, i.e., $\gamma_t=\gamma^1$ for every $t=1,\dots, T$. In this instance the optimum is to take action 1 for all arrivals. Note that following $\hat{\mu}$ would take action 0 for all arrivals, which means the prediction has bad quality.
	
	\item Instance two: the arrivals are adversarial where  $\gamma_t=\gamma^1$ for $t=1,\dots, \frac{\alpha^*-1}{\alpha^*}T$ and $\gamma_t=\gamma^2$ for $t=\frac{\alpha^*-1}{\alpha^*}T+1,\dots, T$. In this instance the optimum is to take action 0 for $\gamma^1$ and take action 1 for $\gamma^2$. We have $\textup{PRD}( \gamma)=\textup{OPT}(\gamma)=\frac{\bar{r}}{\alpha^*}T$, which means the prediction is perfect. Moreover, since $\delta=1/c\alpha^*$, one can verify that \begin{eqnarray*}
		&\quad&\min_{k=1,\dots,\lfloor\frac{1}{\delta}\rfloor}\textup{OPT}(\gamma_{1:k\delta T})+\textup{OPT}(\gamma_{k\delta T+1:T})\\
		&=&\textup{OPT}(\gamma_{1:\frac{\alpha^*-1}{\alpha^*}T})+\textup{OPT}(\gamma_{\frac{\alpha^*-1}{\alpha^*}T+1:T})\\
		&=&\frac{\alpha^*-1}{\alpha^*}T+\frac{\bar{r}}{{\alpha^*}^2}T.
	\end{eqnarray*}
	Then we get \begin{eqnarray*}
		&\quad&1-\left(\textup{OPT}(\gamma_{1:\frac{\alpha^*-1}{\alpha^*}T})+\textup{OPT}(\gamma_{\frac{\alpha^*-1}{\alpha^*}T+1:T})\right)/\textup{OPT}(\gamma)\\
		&=& 1-\left(\frac{\alpha^*-1}{\alpha^*}T+\frac{\bar{r}}{{\alpha^*}^2}T\right)/\frac{\bar{r}}{\alpha^*}T\\
		&=&1-\frac{1}{\alpha^*}-\frac{\alpha^*-1}{\bar{r}}\\
		&=&\lambda,
	\end{eqnarray*} where the last equality follows by our choice of $\bar{r}=(\alpha^*-1)/(1-\lambda-1/\alpha^*)$. Therefore $\gamma$ is $\lambda$-nonstationary with respect to $\delta$.
\end{itemize}

Note that no algorithm can distinguish instance one and instance two before time period $t=\frac{\alpha^*-1}{\alpha^*}T+1$. For any algorithm, assume in instance one it satisfies $\textup{Regret}(\textup{ALG})=o(T)$, then since the optimum is to take action 1 for all arrivals, at time period $t=\frac{\alpha^*-1}{\alpha^*}T+1$ the amount of resources left is at most $\frac{1}{\alpha^*}T-o(T)$. Therefore in instance two the algorithm can take action 1 for at most $\frac{1}{{\alpha^*}^2}T+o(1)$ time periods, so the total rewards gained in instance two satisfies $R(\textup{ALG}\mid \gamma)=\frac{\alpha^*-1}{\alpha^*}T+\frac{\bar{r}}{{\alpha^*}^2}T+o(T)$. Because $\textup{PRD}( \gamma)=\frac{\bar{r}}{\alpha^*}T$, in instance two we have \begin{eqnarray*}
	&\quad&\limsup_{T\to\infty}\left\{\frac{1}{T}\left( (1-\lambda')\max\left\{\frac{1}{\alpha^*}\textup{OPT}(\gamma),\textup{PRD}( \gamma)\right\} - R(\textup{ALG}\mid \gamma)\right)\right\}\\
	&=&\limsup_{T\to\infty}\left\{\frac{1}{T}\left( (\lambda-\lambda')\frac{\bar{r}}{\alpha^*}T - o(T)\right)\right\}\\
	&=& (\lambda-\lambda')\frac{\bar{r}}{\alpha^*}\\
	&>&K\delta\bar{r},
\end{eqnarray*} where the last inequality follows since $c>K/(\lambda-\lambda')$ and $\delta=\alpha^*/c$. 
\endproof

\section{Proofs in Section \ref{Section main algorithm}.1 and \ref{Section main algorithm}.2}\label{Appendix B}
\proof{Proof of \cref{thm:stochastic}.} The proof technique is similar to the proof of Theorem 1 in \cite{balseiro2023best}, which we largely borrow. We break down the proof in three steps.
\\
\text{ }\text{ }\text{ }  \textbf{Step 1 (Primal performance.)} First, we define the stopping time $\tau_A$ of Algorithm \ref{alg:stochastic} as the first time less than $T$ that there exists resource $j$ such that $\sum_{t=1}^{\tau_A}\left(g_t\left(x_t\right)\right)_j+\bar{g} \geq \rho_j T$. Notice that $\tau_A$ is a random variable, and moreover, we will not violate the resource constraints before the stopping time $\tau_A$. We here study the primal-dual gap until the stopping-time $\tau_A$. Notice that before the stopping time $\tau_A$, Algorithm \ref{alg:stochastic} performs the mirror descent steps on the dual function with fine-tuned step sizes.

Consider a time $t \leq \tau_A$ so that actions are not constrained by resources. Then the algorithm takes the action $x_t \in \arg \max _{x \in \mathcal{X}_t}\left\{r_t(x)-\right.$ $\left.\mu_t^{\top} g_t(x)\right\}$, so we have that
$$
r_t\left(x_t\right)=r_t^*\left(\mu_t\right)+\mu_t^{\top} g_t\left(x_t\right).
$$
Let $\bar{D}(\mu \mid \mathcal{P})=\frac{1}{T} \mathbb{E}_{\gamma \sim \mathcal{P}^T}[D(\mu \mid \gamma)]=\mathbb{E}_{(r, g,\mathcal{X}) \sim \mathcal{P}}\left[r^*\left(\mu_t\right)\right]+\mu_t^{\top} \rho$ be the expected dual objective at $\mu$ when requests are drawn i.i.d. from $\mathcal{P} \in \Delta(\mathcal{S})$. Let $\xi_t=\left\{\gamma_0, \ldots, \gamma_t\right\}$ and $\sigma\left(\xi_t\right)$ be the sigma-algebra generated by $\xi_t$. Adding the last two equations and taking expectations conditional on $\sigma\left(\xi_{t-1}\right)$ we obtain, because $\mu_t \in \sigma\left(\xi_{t-1}\right)$ and $\left(r_t, g_t,\mathcal{X}_t\right) \sim \mathcal{P}$, that
\begin{eqnarray}\label{eqn:proof of thm 1}
	\mathbb{E}\left[r_t\left(x_t\right) \mid \sigma\left(\xi_{t-1}\right)\right] & =&\mathbb{E}_{(r, g,\mathcal{X}) \sim \mathcal{P}}\left[f^*\left(\mu_t\right)\right]+\mu_t^{\top} \rho+\mu_t^{\top}\left(\mathbb{E}\left[g_t\left(x_t\right) \mid \sigma\left(\xi_{t-1}\right)\right]-\rho\right) \nonumber\\
	& =&\bar{D}\left(\mu_t \mid \mathcal{P}\right)-\mathbb{E}\left[\mu_t^{\top}\left(\rho-g_t\left(x_t\right)\right) \mid \sigma\left(\xi_{t-1}\right)\right]
\end{eqnarray}
where the second equality follows the definition of the dual function.

Consider the process $Z_t=\sum_{s=1}^t \mu_s^{\top}\left(a_s-b_s\left(x_s\right)\right)-\mathbb{E}\left[\mu_s^{\top}\left(a_s-b_s\left(x_s\right)\right) \mid \sigma\left(\xi_{s-1}\right)\right]$, which is martingale with respect to $\xi_t$ (i.e., $Z_t \in \sigma\left(\xi_t\right)$ and $\mathbb{E}\left[Z_{t+1} \mid \sigma\left(\xi_t\right)\right]=Z_t$ ). Since $\tau_A$ is a stopping time with respect to $\xi_t$ and $\tau_A$ is bounded, the Optional Stopping Theorem implies that $\mathbb{E}\left[Z_{\tau_A}\right]=0$. Therefore,
$$
\mathbb{E}\left[\sum_{t=1}^{\tau_A} \mu_t^{\top}\left(\rho-g_t\left(x_t\right)\right)\right]=\mathbb{E}\left[\sum_{t=1}^{\tau_A} \mathbb{E}\left[\mu_t^{\top}\left(\rho-g_t\left(x_t\right)\right) \mid \sigma\left(\xi_{t-1}\right)\right]\right] .
$$
Using a similar martingale argument for $f_t\left(x_t\right)$ and summing Eq. (\ref{eqn:proof of thm 1}) from $t=1, \ldots, \tau_A$ we obtain that
\begin{eqnarray}
	\label{eqn:slackness}
	\mathbb{E}\left[\sum_{t=1}^{\tau_A} r_t\left(x_t\right)\right] & =&\mathbb{E}\left[\sum_{t=1}^{\tau_A} \bar{D}\left(\mu_t \mid \mathcal{P}\right)\right]-\mathbb{E}\left[\sum_{t=1}^{\tau_A} \mu_t^{\top}\left(\rho-g_t\left(x_t\right)\right)\right] \nonumber\\
	& \geq &\mathbb{E}\left[\tau_A \bar{D}\left(\bar{\mu}_{\tau_A} \mid \mathcal{P}\right)\right]-\mathbb{E}\left[\sum_{t=1}^{\tau_A} \mu_t^{\top}\left(\rho-g_t\left(x_t\right)\right)\right] .
\end{eqnarray}
where the inequality follows from denoting $\bar{\mu}_{\tau_A}=\frac{1}{\tau_A} \sum_{t=1}^{\tau_A} \mu_t$ to be the average dual variable and using that the dual function is convex.

\textbf{Step 2 (Complementary slackness).} Consider the sequence of functions $w_t(\mu)=\mu^{\top}\left(\rho-g_t\left(x_t\right)\right)$, which capture the complementary slackness at time $t$. The sub-gradients are given by $\nabla_\mu w_t(\mu)=\rho-g_t\left(x_t\right)$, which are bounded as follows $\left\|\nabla_\mu w_t(\mu)\right\|_{\infty} \leq\left\|g_t\left(x_t\right)\right\|_{\infty}+\|\rho\|_{\infty} \leq \bar{g}+\bar{\rho}$. Therefore, Algorithm \ref{alg:stochastic} applies online mirror descent to the sequence of functions $w_t(\mu)$ with the fine-tuned step sizes. To analyze the performance, we use the following lemma from \cite{carmon2022making}.

\begin{lemma}[Theorem 4 in \cite{carmon2022making}]\label{lemma:carmon}
	Under the assumptions and notations of our paper, the online mirror descent in Algorithm \ref{alg:stochastic} with the proposed step sizes satisfies, with probability at least $1-\frac{1}{T}$, that $$\sum_{t=1}^{\tau_A}\left( w_t\left(\mu_t\right)-w_t(\mu^*) \right)\leq CT^{\frac{1}{2}}||\mu_1-\mu^*||_1\cdot\textup{polylog}(T)\footnote{\textup{Polylog}(T) hides logarithmic terms in $T$. For explicit expressions see \cite{carmon2022making}.}$$ where $C>0$ is some constant.
\end{lemma}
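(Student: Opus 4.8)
The plan is to recognize the statement as a direct instantiation of the parameter-free online learning guarantee of \cite{carmon2022making}: the step-size tuning in Algorithm \ref{alg:stochastic} is precisely their procedure, and it remains only to verify that the loss sequence we feed it satisfies their hypotheses and to convert their comparator- and gradient-adaptive bound into the stated form. First I would note that each $w_t(\mu)=\mu^\top(\rho-g_t(x_t))$ is linear, hence convex, in $\mu$, with subgradient $\phi_t=\rho-g_t(x_t)$ that does not depend on $\mu$ and satisfies $\|\phi_t\|_\infty\le\bar g+\bar\rho$, as already observed. Thus Algorithm \ref{alg:stochastic} is running online mirror descent on a sequence of convex losses with uniformly bounded subgradients, which is exactly the regime covered by \cite{carmon2022making}.

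Next I would confirm the correspondence between the bookkeeping in Algorithm \ref{alg:stochastic} and the parameter-free scheme: the statistics $\theta_t(\hat\mu,\eta)=\max_{s\le t}\|\hat\mu-\mu_s\|_1$ and $\Phi_t(\hat\mu,\eta)=\sum_{s\le t}\|\phi_s\|_\infty^2$ are the comparator-distance and cumulative-gradient proxies their analysis tracks, the doubling over scales $k=2,4,8,\dots$ with damping $\alpha^{(k)},\beta^{(k)}$ realizes their search over unknown step sizes, and the bisection subroutine solves the implicit fixed-point equation $\eta_t=\theta_t/\sqrt{\alpha\Phi_t+\beta}$ to the required accuracy. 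With these identifications, Theorem 4 of \cite{carmon2022making} yields, with probability at least $1-1/T$, a bound of the form $\sum_{t=1}^{\tau_A}\big(w_t(\mu_t)-w_t(\mu^*)\big)\le C\,\|\hat\mu-\mu^*\|_1\sqrt{\Phi_{\tau_A}}\cdot\mathrm{polylog}(T)$, where $\mu^*$ plays the role of the (unknown) comparator and $\hat\mu=\mu_1$ is the initialization.

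Finally I would convert the gradient-adaptive factor into the advertised $T^{1/2}$ rate: since $\Phi_{\tau_A}=\sum_{t=1}^{\tau_A}\|\phi_t\|_\infty^2\le(\bar g+\bar\rho)^2\tau_A\le(\bar g+\bar\rho)^2 T$, we have $\sqrt{\Phi_{\tau_A}}\le(\bar g+\bar\rho)\sqrt T$, and absorbing the constant $\bar g+\bar\rho$ into $C$ gives the claimed $C T^{1/2}\|\mu_1-\mu^*\|_1\cdot\mathrm{polylog}(T)$.

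The main obstacle is not the algebra but ensuring the cited guarantee transfers cleanly in two respects. First, the horizon here is the random stopping time $\tau_A$ rather than a fixed $T$, so I must invoke the \emph{anytime} form of the \cite{carmon2022making} bound --- one holding simultaneously for all $t\le T$ with the stated $1-1/T$ failure probability --- and then specialize it at $t=\tau_A$; this is exactly why the damping constants $C_t^{(k)}=2k+\log(60T\log^2(6t))$ carry the logarithmic term, which supplies the union bound over time. Second, \cite{carmon2022making} is most naturally stated for Euclidean updates, whereas our mirror descent uses a general reference function $h$ that is $\sigma$-strongly convex with respect to $\|\cdot\|_1$; I would check that their analysis uses only strong convexity of the mirror map together with the duality between the primal $\ell_\infty$ and dual $\ell_1$ norms, so that the comparator norm $\|\hat\mu-\mu^*\|_1$ and the gradient norm $\|\phi_t\|_\infty$ enter the dual pairing exactly as required. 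Once these two points are secured, the remaining steps are routine.
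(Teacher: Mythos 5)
Your proposal is correct and takes essentially the same route as the paper, which in fact offers no proof of this lemma at all --- it simply imports Theorem 4 of \cite{carmon2022making} as a black box. Your additional due-diligence (verifying convexity and bounded subgradients of $w_t$, matching the $\theta_t,\Phi_t$ bookkeeping, bounding $\sqrt{\Phi_{\tau_A}}\le(\bar g+\bar\rho)\sqrt{T}$, and flagging the anytime/stopping-time and non-Euclidean mirror-map transfer issues) is exactly the verification the paper leaves implicit.
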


Because $||\mu_1-\mu^*||_1=||\hat{\mu}-\mu^*||_1\leq\kappa T^{-a}$, Lemma \ref{lemma:carmon} states that $\sum_{t=1}^{\tau_A} w_t\left(\mu_t\right)-w_t(\mu^*) \leq \kappa CT^{\frac{1}{2}-a}\cdot\textup{polylog}(T)$ with probability at least $1-\frac{1}{T}$.

\textbf{Step 3 (Putting it all together).} For any $\mathcal{P} \in \Delta(\mathcal{S})$ and $\tau_A \in[0, T]$ we have that
\begin{eqnarray}\label{eqn:all together}
	\mathbb{E}_{\gamma \sim \mathcal{P}^T}[\mathrm{OPT}(\gamma)]=\frac{\tau_A}{T} \mathbb{E}_{\gamma \sim \mathcal{P}^T}[\mathrm{OPT}(\gamma)]+\frac{T-\tau_A}{T} \mathbb{E}_{\gamma \sim \mathcal{P}^T}[\mathrm{OPT}(\gamma)] \leq \tau_A \bar{D}\left(\bar{\mu}_{\tau_A} \mid \mathcal{P}\right)+\left(T-\tau_A\right) \bar{r},
\end{eqnarray}
where the inequality uses \cref{lemma:weak duality} and the fact that $\text{OPT}(\gamma) \leq \bar{r}T $. Therefore, with probability at least $1-\frac{1}{T}$,
\begin{eqnarray}\label{eqn:clubsuit}
	\text{Regret}(\text{SA} \mid \mathcal{P}) & =&\mathbb{E}_{\gamma \sim \mathcal{P}^T}[\text{OPT}(\gamma)-R(\text{SA} \mid \gamma)]\nonumber \\&\leq& \mathbb{E}_{\gamma \sim \mathcal{P}^T}\left[\text{OPT}(\gamma)-\sum_{t=1}^{\tau_A} r_t\left(x_t\right)\right]\nonumber\\& \leq& \mathbb{E}_{\gamma \sim \mathcal{P}^T}\left[\text{OPT}(\gamma)-\tau_A D\left(\bar{\mu}_{\tau_A} \mid \mathcal{P}\right)+\sum_{t=1}^{\tau_A}w_t\left(\mu_t\right)\right]\nonumber \\
	& \leq& \mathbb{E}_{\gamma \sim \mathcal{P}^T}\left[\textup{OPT}(\gamma)-\tau_A D\left(\bar{\mu}_{\tau_A} \mid \mathcal{P}\right)+\sum_{t=1}^{\tau_A} w_t(\mu^*)+CT^{\frac{1}{2}-a}\cdot\textup{polylog}(T)\right] \nonumber\\&\leq &\mathbb{E}_{\gamma \sim \mathcal{P}^T}\underbrace{\left[\left(T-\tau_A\right) \cdot \bar{r}+\sum_{t=1}^{\tau_A} w_t(\mu^*)+CT^{\frac{1}{2}-a}\cdot\textup{polylog}(T)\right]}_{\clubsuit}
\end{eqnarray}
where the first inequality follows from using that $\tau_A \leq T$ together with $r_t(\cdot) \geq 0$ to drop all requests after $\tau_A$; the second is from Eq. (\ref{eqn:slackness}); the third follows from \cref{lemma:carmon}; and the last from Eq. (\ref{eqn:all together}).

Note that $\sum_{t=1}^{\tau_A} w_t(\mu^*)\leq \sum_{t=1}^{\tau_A} w_t(\mu)$ for every $\mu\in \mathbb{R}_{+}^m$. We now discuss the choice of $\mu \in \mathbb{R}_{+}^m$ in order to upper bound $\sum_{t=1}^{\tau_A} w_t(\mu^*)$. If $\tau_A=T$, then set $\mu=0$ to obtain that $\clubsuit \leq CT^{\frac{1}{2}-a}\cdot\textup{polylog}(T)$. If $\tau_A<T$, then there exists a resource $j \in[m]$ such that $\sum_{t=1}^{\tau_A}\left(g_t\left(x_t\right)\right)_j+\bar{g} \geq \rho_j T $. Set $\mu=\left(\bar{r} / \rho_j\right) e_j$ with $e_j$ being the $j$-th unit vector. This yields
\begin{eqnarray*}
	&\quad&\sum_{t=1}^{\tau_A} w_t(\mu^*)\leq\sum_{t=1}^{\tau_A} w_t(\mu)=\sum_{t=1}^{\tau_A} \mu^{\top}\left(\rho-g_t\left(x_t\right)\right)\\ &=&\frac{\bar{r}}{\rho_j} \sum_{t=1}^{\tau_A}\left(\rho_j-\left(g_t\left(x_t\right)\right)_j\right) \leq \frac{\bar{r}}{\rho_j}\left(\tau_A \rho_j-\rho_j T +\bar{g}\right)=\frac{\bar{r}}{\rho_j} \bar{g}-\bar{r}\left(T-\tau_A\right),
\end{eqnarray*}
where the inequality follows because of the definition of the stopping time $\tau_A$. Therefore, using that $\rho_j \geq \underline{\rho}$ for every resource $j \in[m]$, we have
$$
\clubsuit \leq \frac{\bar{r}\bar{g}}{\underline{\rho}}+CT^{\frac{1}{2}-a}\cdot\textup{polylog}(T).
$$
Therefore $\text{Regret}(\text{SA})\leq \frac{\bar{r}\bar{g}}{\underline{\rho}}+CT^{\frac{1}{2}-a}\cdot\textup{polylog}(T)$ with probability at least $1-\frac{1}{T}.$ We conclude by noting that $\text{Regret}(\text{SA} )\leq \mathbb{E}_{\gamma \sim \mathcal{P}^T}[\text{OPT}(\gamma)]\leq \bar{r}T$, so we have $\text{Regret}(\text{SA})\leq \frac{\bar{r}\bar{g}}{\underline{\rho}}+\bar{r}+CT^{\frac{1}{2}-a}\cdot\textup{polylog}(T)\in\tilde{O}(\max\{T^{\frac{1}{2}-a},1\})$.
\endproof

\vspace{1em}

\proof{Proof of \cref{thm:adversarial}.} By Assumption \ref{assumption: Adversarial Arrival Model(b)}, there exists a function $\psi(T)$ such that $||\hat{\mu}-\mu^*||_1\leq\psi (T)$ and $\psi(T)=o(\epsilon(T))$. We break down the proof into two lemmas, which compares $R(\textup{AA}\mid \gamma)$ with $\frac{1}{\alpha^*}\textup{OPT}(\gamma)$ and $R(\textup{PRD}\mid \gamma)$ separately. 
\begin{lemma}\label{lemma:adversarial_1}
	Consider the Adversarial Arrival Algorithm \textup{(AA)} under the adversarial arrival model. Given a prediction $\hat{\mu}$ with accuracy parameter $a$, it holds that: $$\limsup_{T\to\infty}\sup_{\gamma\in\mathcal{S}^T}\left\{\frac{1}{T}\left( \frac{1}{\alpha^*}\textup{OPT}(\gamma)- R(\textup{AA}\mid \gamma)\right)\right\}\leq 0.$$
\end{lemma}

\proof{Proof of \cref{lemma:adversarial_1}.}

The proof is drawn from the proof of Theorem 2 in \cite{balseiro2023best}. The proof contains three steps, which is similar to the proof of \cref{thm:stochastic}.
\\
\text{ }\text{ }\text{ } \textbf{Step 1 (Primal performance.)}
Fix an arrival sequence $\gamma\in\mathcal{S}^T$ and let $x^*\in\mathcal{X}_t$ be an optimal action in $\text{OPT}(\gamma)$ at time $t$. Let $\tau_A$ be the stopping time of Algorithm \ref{alg:adversarial}, which is defined similarly as in the proof of \cref{thm:stochastic}, then for $t\leq\tau_A$ we have $x_t\in\arg \max_{ x\in \mathcal{X}_t}\left\{r_t(x)-\mu_t^{\top} g_t(x)\right\}$, and thus $r_t\left(x_t\right) \geq r_t\left(x_t^*\right)-\mu_t^{\top}\left(g_t\left(x_t^*\right)-g_t\left(x_t\right)\right)$ and $0=r_t(0) \leq r_t\left(x_t\right)-\mu_t^{\top} g_t\left(x_t\right)$. Therefore
\begin{eqnarray*}
	\alpha^* r_t\left(x_t\right) & =&r_t\left(x_t\right)+(\alpha^*-1)r_t\left(x_t\right) \\
	& \geq& r_t\left(x_t^*\right)+\mu_t^{\top} g_t\left(x_t\right)-\mu_t^{\top} g_t\left(x_t^*\right)+(\alpha^*-1)\left(\mu_t^{\top} g_t\left(x_t\right)\right) \\
	& =&r_t\left(x_t^*\right)-\alpha^* \mu_t^{\top}\left(\rho-g_t\left(x_t\right)\right)+\alpha^* \mu_t^{\top} \rho-\mu_t^{\top} g_t\left(x_t^*\right) \\
	& \geq& r_t\left(x_t^*\right)-\alpha^*\mu_t^{\top}\left(\rho-g_t\left(x_t\right)\right),
\end{eqnarray*}
where the second inequality is because $\alpha^* \mu_t^{\top} \rho-\mu_t^{\top} g_t\left(x_t^*\right) \geq 0$ by our definition of $\alpha^*$ and the fact that $\mu_t \geq 0$. Summing up over $t=1, \ldots, \tau_A$ yields
\begin{eqnarray}\label{eqn:proof of thm2 slackness}
	\alpha^* \sum_{t=1}^{\tau_A} r_t\left(x_t\right) \geq \sum_{t=1}^{\tau_A} r_t\left(x_t^*\right)-\alpha^* \sum_{t=1}^{\tau_A} \mu_t^{\top}\left(\rho-g_t\left(x_t\right)\right) .
\end{eqnarray}

\textbf{Step 2 (Complementary slackness).} Denoting, as before, $w_t(\mu)=\mu^{\top}\left(\rho-b_t\left(x_t\right)\right)$. As we have seen in the step 2 in the proof of \cref{thm:stochastic}  (the analysis is deterministic in nature), Algorithm \ref{alg:adversarial} performs online mirror descent to the sequence of functions $w_t(\mu)$ with step size $\eta=c\epsilon(T)/T$ where $c>0$ is an arbitrary scaling constant. By our assumption that the reference function $h(\cdot)$ is Lipschitz, there exists a constant $L>0$ such that $V_h(\mu',\mu'')\leq L||\mu'-\mu''||_1$ for all $\mu',\mu''\in\mathbb{R}^m_+$. By a standard result on online mirror descent (see, e.g., Appendix G of \cite{balseiro2023best}), we have \begin{eqnarray}\label{eqn:proof of thm2 oco}
	\sum_{t=1}^{\tau_A} w_t(\mu_t)&\leq& \sum_{t=1}^{\tau_A} w_t(\mu^*)+\frac{(\bar{g}+\bar{\rho})^2\eta}{2\sigma}\tau_A+\frac{1}{\eta}V_h(\mu^*,\mu_1) \nonumber \\
	&\leq& \sum_{t=1}^{\tau_A}w_t(\mu^*)+\frac{c(\bar{g}+\bar{\rho})^2}{2\sigma}\epsilon(T)+\frac{\kappa L\psi(T)T}{c\epsilon(T)},
\end{eqnarray}
where the first inequality is the standard online mirror descent result, and the second inequality follows by the step size $\eta=c\epsilon(T)/T$ and the fact that $||\mu_1-\mu^*||_1=||\hat{\mu}-\mu^*||_1\leq\psi(T)$.

\textbf{Step 3 (Putting it all together).}
We have
$$
\begin{aligned}
	\operatorname{OPT}(\gamma)-\alpha^* R(\text{AA} \mid \gamma) & \leq \sum_{t=1}^T r_t\left(x_t^*\right)-\alpha^* \sum_{t=1}^{\tau_A} r_t\left(x_t\right) \\
	& \leq \sum_{t=\tau_A+1}^T r_t\left(x_t^*\right)+\alpha^* \sum_{t=1}^{\tau_A} w_t\left(\mu_t\right) \\
	& \leq \left(T-\tau_A\right) \cdot \bar{r}+\alpha^* \sum_{t=1}^{\tau_A}w_t(\mu^*)+\alpha^*\left(\frac{c(\bar{g}+\bar{\rho})^2}{2\sigma}\epsilon(T)+\frac{\kappa L\psi(T)T}{c\epsilon(T)} \right),
\end{aligned}
$$
where the first inequality follows because $\tau_A \leq T$ and $r_t(\cdot) \geq 0$, the second inequality is from Eq. (\ref{eqn:proof of thm2 slackness}), and the third inequality utilizes $r_t\left(x_t^*\right) \leq \bar{r}$ and Eq. (\ref{eqn:proof of thm2 oco}). Similar to the proof of \cref{thm:stochastic}, we note that $\sum_{t=1}^{\tau_A} w_t(\mu^*)\leq \sum_{t=1}^{\tau_A} w_t(\mu)$ for every $\mu\in \mathbb{R}_{+}^m$ and discuss the choice of $\mu \in \mathbb{R}_{+}^m$ in order to upper bound $\sum_{t=1}^{\tau_A} w_t(\mu^*)$.
If $\tau_A=T$, then set $\mu=0$, and the result follows. If $\tau_A<T$, then there exists a resource $j \in[m]$ such that $\sum_{t=1}^{\tau_A}\left(g_t\left(x_t\right)\right)_j+\bar{g} \geq \rho_j T $. Set $\mu=\left(\bar{r} /\left(\alpha^* \rho_j\right)\right) e_j$ where $e_j$ is the $j$-th unit vector and repeat the steps of the stochastic arrivals case to obtain:
$$
\operatorname{OPT}(\gamma)-\alpha^* R(\text{AA} \mid \gamma) \leq \frac{\bar{r}\bar{g}}{\underline{\rho}}+\alpha^*\left(\frac{c(\bar{g}+\bar{\rho})^2}{2\sigma}\epsilon(T)+\frac{\kappa L\psi(T)T}{c\epsilon(T)}\right),
$$
which finishes the proof by noticing that $\epsilon(T)$ and $\psi(T)T/\epsilon(T)$ are both sub-linear in $T$.
\endproof
\vspace{1em}

\begin{lemma}\label{lemma:adversarial_2}
	Consider the Adversarial Arrival Algorithm \textup{(AA)} under the adversarial arrival model. Given a prediction $\hat{\mu}$ with accuracy parameter $a$, it holds that: $$\limsup_{T\to\infty}\sup_{\gamma\in\mathcal{S}^T}\left\{\frac{1}{T}\left( \textup{PRD}(\gamma)- R(\textup{AA}\mid \gamma)\right)\right\}\leq 0.$$
\end{lemma}

\proof{Proof of \cref{lemma:adversarial_2}.}

Recall the updating rule $\mu_{t+1}\in \arg \min _{\mu \in \mathbb{R}_{+}^m} \phi_t^{\top} \mu+\frac{1}{\eta} V_h\left(\mu, \mu_t\right)$ where $\phi_t=-g_t(x_t)+\rho$. Note that $\phi_t^{\top} \mu+\frac{1}{\eta} V_h\left(\mu, \mu_t\right)$ is convex in $\mu$, and set its gradient of $\mu$ to zero yields $\phi_t+\frac{1}{\eta}(\nabla h(\mu)-\nabla h(\mu_t))=0$, where $h(\cdot)$ is the reference function. Because $||\phi_t||_\infty\leq ||g_t(x_t)||_\infty+||\rho||_\infty\leq \bar{g}+\bar{\rho}$ and by our assumption $h(\cdot)$ is $\sigma$-strongly convex with respect to the $\ell_1$-norm in $\mathbb{R}^m_+$, we have $||\mu_{t+1}-\mu_t||_1\leq\frac{\eta}{\sigma}||\phi_t||_\infty\leq \frac{c(\bar{g}+\bar{\rho})\epsilon(T)}{\sigma T}$. Therefore $||\mu_{t}-\hat{\mu}||_1=||\mu_{t}-\mu_1||_1\leq\sum_{s=2}^t||\mu_{s}-\mu_{s-1}||_1\leq \frac{c(\bar{g}+\bar{\rho})\epsilon(T)}{\sigma T}t$, and hence \begin{equation}\label{eqn: proof of thm2 bound of mu}
	\sum_{t=1}^T ||\mu_{t}-\hat{\mu}||_1\leq \left(\frac{c(\bar{g}+\bar{\rho})}{2\sigma}+1\right)\epsilon(T).
\end{equation}

Let $x_t^{\hat{\mu}}$ be the actions taken by the Prediction Algorithm at time $t$, then $\textup{PRD}(\gamma)=\sum_{t=1}^T r_t(x_t^{\hat{\mu}})$. 
Because $\zeta>0$ is a constant and $\epsilon(T)\in o(1)$, $||\mu_{t}-\hat{\mu}||_1\leq \frac{c(\bar{g}+\bar{\rho})\epsilon(T)}{\sigma T}t\leq\zeta$ for all $t$ as $T\to\infty$. Therefore $\mu_1,\dots,\mu_T$ is a sequence of dual variables that satisfies Assumption \ref{assumption: Adversarial Arrival Model(b)}. Let $\tau_A^j$ be the depletion time of resources $j$ of Algorithm \ref{alg:adversarial} and $\tau_P^j$ be the depletion time of resources $j$ of Algorithm \ref{alg:prediction}. Then by Assumption \ref{assumption: Adversarial Arrival Model(b)} we have $|\tau_P^j-\tau_A^j|\in o(T)$ for all resource $j$. Moreover, since there are $m$ resources, outside of all times between each $\tau_A^j$ and $\tau_P^j$, $T$ is partitioned into at most $m+1$ consecutive time blocks, say $B_1,\dots,B_k$ for some $k\leq m+1$. Note that the set of feasible actions $\{x\mid x\in\mathcal{X}_t,g_t(x)\leq \text{amount  of remaining resources}\}$ at time period $t$ is the same for Algorithm \ref{alg:adversarial} and Algorithm \ref{alg:prediction} for all $t\in \cup_{k'=1}^k B_{k'}$. Therefore both algorithms perform online mirror descent during time periods $B_{1},\dots,B_k$. Therefore similar to Eq. (\ref{eqn:proof of thm2 oco}) we have 
\begin{equation}\label{eqn: proof of thm2 bound of w}
	\sum_{t\in B_{k'}} w_t(\mu_t)\leq \sum_{t\in B_{k'}}w_t(\hat{\mu})+\frac{c(\bar{g}+\bar{\rho})^2}{2\sigma}\epsilon(T)+\frac{\kappa L\psi(T)T}{c\epsilon(T)}
\end{equation} for each $B_{k'}$. Also, because $x_t\in \arg \max _{ x\in \mathcal{X}_t,g_t(x)\leq G_t}\left\{r_t(x)-\mu_t^{\top} g_t(x)\right\}$, for $t\in \cup_{k'=1}^k B_{k'}$ we have \begin{equation}\label{eqn: proof of thm2 bound of r}
	r_t(x_t)-\mu_t^{\top} g_t(x_t)\geq r_t(x_t^{\hat{\mu}})-\mu_t^{\top} g_t(x_t^{\hat{\mu}}).
\end{equation} Because $w_t(\mu_t)=\mu_t^\top(\rho-g_t(x_t))$ and $w_t(\hat{\mu})=\hat{\mu}^\top(\rho-g_t(x_t^{\hat{\mu}}))$, for each $B_k'$ we get
\begin{eqnarray*}
	\sum_{t\in B_{k'}} (r_t(x_t^{\hat{\mu}})-r_t(x_t))&\leq&\sum_{t\in B_{k'}} (\mu_t^{\top} g_t(x_t^{\hat{\mu}})-\mu_t^{\top} g_t(x)) \\
	&= &\sum_{t\in B_{k'}} \mu_t^{\top}(\rho- g_t(x))-\sum_{t\in B_{k'}}\hat{\mu}^{\top} (\rho-g_t(x_t^{\hat{\mu}}))+\sum_{t\in B_{k'}}(\hat{\mu}-\mu_t)^{\top} (\rho-g_t(x_t^{\hat{\mu}}))\\
	&\leq& \sum_{t\in B_{k'}} w_t(\mu_t)-\sum_{t\in B_{k'}}w_t(\hat{\mu})+\bar{\rho}\sum_{t\in B_{k'}}||\mu_{t}-\hat{\mu}||_1\\
	&\leq& \frac{c(\bar{g}+\bar{\rho})^2}{2\sigma}\epsilon(T)+\frac{\kappa L\psi(T)T}{c\epsilon(T)}+\left(\frac{c\bar{\rho}(\bar{g}+\bar{\rho})}{2\sigma}+\bar{\rho}\right)\epsilon(T),
\end{eqnarray*}
where the first inequality follows from Eq. (\ref{eqn: proof of thm2 bound of r}), the second inequality is because by H\"older's inequality $(\hat{\mu}-\mu_t)^{\top} (\rho-g_t(x_t^{\hat{\mu}}))\leq ||\hat{\mu}-\mu_t||_1||\rho-g_t(x_t^{\hat{\mu}})||_\infty\leq \bar{\rho}||\hat{\mu}-\mu_t||_1$, and the third inequality follows from Eq. (\ref{eqn: proof of thm2 bound of w}) and Eq. (\ref{eqn: proof of thm2 bound of mu}). Therefore
\begin{eqnarray*}
	\textup{PRD}(\gamma)- R(\textup{AA}\mid \gamma)&=& \sum_{t\in\cup_{k'=1}^k B_{k'}} (r_t(x_t^{\hat{\mu}})-r_t(x_t))+\sum_{t\in [T]\setminus\cup_{k'=1}^k B_{k'}} (r_t(x_t^{\hat{\mu}})-r_t(x_t))\\
	&\leq& k\left(\frac{c(\bar{g}+\bar{\rho})^2}{2\sigma}\epsilon(T)+\frac{\kappa L\psi(T)T}{c\epsilon(T)}+\left(\frac{c\bar{\rho}(\bar{g}+\bar{\rho})}{2\sigma}+\bar{\rho}\right)\epsilon(T)\right)\\&\quad&+\bar{r}\cdot|[T]\setminus\cup_{k'=1}^k B_{k'}|\\
	&=&o(T),
\end{eqnarray*}
where the first inequality is because $r_t(x_t^{\hat{\mu}})-r_t(x_t)\leq\bar{r}$ for each $t$, and the second inequality is by noting that $k\leq m$, $\epsilon(T)= o(T),\psi(T)T/\epsilon(T)=o(T)$, and $|[T]\setminus\cup_{k'=1}^k B_{k'}|\leq\sum_{j=1}^m|\tau_P^j-\tau_A^j|$. This shows $\limsup_{T\to\infty}\sup_{\gamma\in\mathcal{S}^T}\left\{\frac{1}{T}\left(\textup{PRD}(\gamma)- R(\textup{AA}\mid \gamma)\right)\right\}\leq 0.$ 
\endproof
Combine \cref{lemma:adversarial_1} and \cref{lemma:adversarial_2} gives \cref{thm:adversarial}. 
\endproof

\section{Proofs in Section \ref{Section main algorithm}.3}\label{Appendix C}

\proof{Proof of \cref{thm:main}.}

We divide the proof into three cases. The first case is that the underlying arrival model is stochastic and the algorithm never switches to the Adversarial Arrival Algorithm (i.e., the ``for'' loop in the algorithm is never broken), and in this case we show that $\textup{Regret}(\textup{MainALG})=\tilde{O}(\max\{T^{\frac{1}{2}-a},1\})$. The second case is that the underlying arrival model is stochastic and yet the algorithm switches to the Adversarial Arrival Algorithm at some point, and we prove that this case happens with low probability. The third case is that the underlying arrival model is adversarial, and in this case we show that $\limsup_{T\to\infty}\sup_{\gamma\in\mathcal{S}^T}\left\{\frac{1}{T}(1-\lambda)\left(\max\left\{\frac{1}{\alpha^*}\textup{OPT}(\gamma),\textup{PRD}(\gamma)\right\} - R(\textup{MainALG}\mid \gamma)\right)\right\}\leq \delta\bar{r}$, regardless of whether the algorithm switches to the Adversarial Arrival Algorithm or not. To simplify the notation, throughout the proof we will assume $\delta T$ and $1/\delta$ are integers. The roundings $\lfloor \delta T \rfloor$ and $\lceil 1/\delta \rceil$ in our algorithm will not affect the result of our analysis.

\noindent \textbf{Case 1:}

Suppose the underlying arrival model is stochastic where each arrival $\gamma_t$ is drawn i.i.d. from an underlying probability distribution $\mathcal{P}\in\Delta(\mathcal{S})$, and the algorithm never switches to the Adversarial Arrival Algorithm. Then the algorithm decomposes $T$ time periods into $1/\delta$ time blocks, where each time block contains $\delta T$ time periods and has at least $\delta T \rho$ amount of resources available. During each time block the algorithm performs the Stochastic Arrival Algorithm. Therefore, by our definition of $\text{OPT}_s(\gamma_1,\dots,\gamma_s)$ and the performance guarantee of the Stochastic Arrival Algorithm given by \cref{thm:stochastic}, we have \begin{eqnarray}\label{eqn:proof of thm 3,0}
	\mathbb{E}_{\gamma\sim\mathcal{P}^T}\left[\sum_{k=0}^{1/\delta-1}\text{OPT}(\gamma_{k\delta T+1:(k+1)\delta T})-R(\textup{MainALG}\mid \gamma)\right]&=& \tilde{O}\left(\frac{1}{\delta}\max\left\{(\delta T)^{\frac{1}{2}-a},1\right\}\right)\nonumber\\&=&\tilde{O}\left(\max\left\{T^{\frac{1}{2}-a},1\right\}\right).
\end{eqnarray}

For each time period $t$, let $D_t(\mu\mid \gamma_t):=r^*_t(\mu)+\mu^\top\rho$ be the $t$-term of the Lagrangian dual function \cref{eqn:dual}, then every $D_t(\mu\mid \gamma_t)$ is also i.i.d. By \cref{lemma:weak duality} and \cref{lemma:duality gap}, for every arrival sequence $\gamma$ we have \begin{equation}\label{eqn:proof of thm 3,2}
	\text{OPT}(\gamma_{1:s})\leq\sum_{t=1}^s D_t(\mu\mid \gamma_t) \quad \forall \mu \in\mathbb{R}^m_+\end{equation} and 
\begin{equation}\label{eqn:proof of thm 3,3}
	\min_{\mu \in\mathbb{R}^m_+}\sum_{t=1}^s D_t(\mu\mid \gamma_t)\leq \text{OPT}(\gamma_{1:s})+(m+1)\bar{r}
\end{equation} for every time period $s$.
Setting $s=T$, taking $\mu \in\mathbb{R}^m_+$ to be the minimizer, and taking the expected value of Eq. (\ref{eqn:proof of thm 3,2}) gives 
\begin{equation}\label{eqn:proof of thm 3,2'}
	\mathbb{E}_{\gamma\sim\mathcal{P}^T}\left[\text{OPT}(\gamma)\right]\leq\mathbb{E}_{\gamma\sim\mathcal{P}^T}\left[\min_{\mu \in\mathbb{R}^m_+}\sum_{t=1}^T D_t(\mu\mid \gamma_t)\right]. \end{equation}
Taking expected value on both sides of Eq. (\ref{eqn:proof of thm 3,3}) yields \begin{equation}\label{eqn:proof of thm 3,4}
	\mathbb{E}_{(\gamma_{1:s})\sim\mathcal{P}^s}\left[\min_{\mu \in\mathbb{R}^m_+}\sum_{t=1}^s D_t(\mu\mid \gamma_t)\right]\leq \mathbb{E}_{(\gamma_{1:s})\sim\mathcal{P}^s}[\text{OPT}(\gamma_{1:s})]+(m+1)\bar{r}.
\end{equation} 
Therefore
\begin{equation}\label{eqn:proof of thm 3,4'}
	\mathbb{E}_{\gamma\sim\mathcal{P}^T}\left[\sum_{k=0}^{1/\delta-1}\min_{\mu \in\mathbb{R}^m_+}\sum_{t=k\delta T+1}^{(k+1)\delta T} D_t(\mu\mid \gamma'_t)\right]\leq\mathbb{E}_{\gamma\sim\mathcal{P}^T}\left[\sum_{k=0}^{1/\delta-1}\text{OPT}(\gamma_{k\delta T+1:(k+1)\delta T})\right]+(m+1)\bar{r}/\delta.
\end{equation}
Combine Eq. (\ref{eqn:proof of thm 3,2'}) and Eq. (\ref{eqn:proof of thm 3,4'}) we have
\begin{eqnarray}\label{eqn:proof of thm 3,4''}
	\mathbb{E}_{\gamma\sim\mathcal{P}^T}\left[\text{OPT}(\gamma_{1:T})\right]&\leq&\mathbb{E}_{\gamma\sim\mathcal{P}^T}\left[\min_{\mu \in\mathbb{R}^m_+}\sum_{t=1}^T D_t(\mu\mid \gamma_t)\right]\nonumber\\
	&\leq &\mathbb{E}_{\gamma\sim\mathcal{P}^T}\left[\sum_{k=0}^{1/\delta-1}\min_{\mu \in\mathbb{R}^m_+}\sum_{t=k\delta T+1}^{(k+1)\delta T} D_t(\mu\mid \gamma'_t)\right]\nonumber\\
	&\leq &\mathbb{E}_{\gamma\sim\mathcal{P}^T}\left[\sum_{k=0}^{1/\delta-1}\text{OPT}(\gamma_{k\delta T+1:(k+1)\delta T})\right]+(m+1)\bar{r}/\delta.
\end{eqnarray}
We conclude the proof of case 1 by combining Eq. (\ref{eqn:proof of thm 3,0}) and Eq. (\ref{eqn:proof of thm 3,4''})and noting that $(m+1)\bar{r}/\delta$ is a constant:
\begin{eqnarray*}
	\textup{Regret}(\textup{MainALG})=\mathbb{E}_{\gamma\sim\mathcal{P}^T}\left[\text{OPT}(\gamma
	)-R(\textup{MainALG}\mid \gamma)\right]=\tilde{O}(\max\{T^{\frac{1}{2}-a},1\}).
\end{eqnarray*}

\noindent \textbf{Case 2:}

Suppose the underlying arrival model is stochastic where each arrival $\gamma_t$ is drawn i.i.d. from an underlying probability distribution $\mathcal{P}\in\Delta(\mathcal{S})$. We show that the probability that the algorithm switches to the Adversarial Arrival Algorithm is low. More specifically, we show that this probability is no more than $\frac{3+\delta}{\delta^2 T}$.

First we prove a Chernoff-like bound for sums with stopping times.
\begin{lemma}[Stopping Time Chernoff]\label{lemma:Chernoff}
	Consider a discrete-time random sequence with states $S_1,S_2,\dots$ where each state $S_t$ determines two values $x_t$ and $y_t$ with $x_t,y_t\in[0,c]$ for some constant $c>0$. Suppse $\mathbb{E}[x_ t \mid S_{t-1}]\leq\mathbb{E}[y_ t \mid S_{t-1}]$. Then for every $0<\epsilon<1$ and every $\mu>0$ we have
	$$\mathbb{P}\left(\exists\text{ } \tau\text{ such that }\sum_{t=1}^\tau x_t/(1+\epsilon) - \sum_{t=1}^\tau y_t/(1-\epsilon) \geq \epsilon \mu c\right)<\exp(-\epsilon^2\mu).$$
\end{lemma}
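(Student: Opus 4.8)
The plan is to reduce the time-uniform tail bound to a single application of Ville's maximal inequality for a nonnegative supermartingale, after establishing one per-step exponential-moment estimate. Let $\mathcal{F}_t = \sigma(S_1,\dots,S_t)$, set $\Delta_t := \frac{x_t}{1+\epsilon} - \frac{y_t}{1-\epsilon}$, fix $\lambda := \epsilon/c$, and define $M_t := \exp\!\big(\lambda \sum_{s=1}^{t}\Delta_s\big)$ with $M_0 := 1$. Since $M_t = M_{t-1}\exp(\lambda\Delta_t)$, the process $M_t$ is a nonnegative supermartingale as soon as we show the conditional estimate $\mathbb{E}[\exp(\lambda\Delta_t)\mid\mathcal{F}_{t-1}] \le 1$ for every $t$. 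Everything else is bookkeeping: on the event that $\sum_{s=1}^{\tau}\Delta_s \ge \epsilon\mu c$ we have $M_\tau \ge \exp(\lambda\epsilon\mu c) = \exp(\epsilon^2\mu)$ by the choice $\lambda = \epsilon/c$, so the target event is contained in $\{\sup_t M_t \ge \exp(\epsilon^2\mu)\}$, whose probability Ville's inequality bounds by $\mathbb{E}[M_0]\exp(-\epsilon^2\mu) = \exp(-\epsilon^2\mu)$.

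The crux is therefore the per-step estimate, and the only subtlety is that $x_t$ and $y_t$ are both functions of $S_t$ and hence possibly correlated, so I would not try to factor the expectation. Writing $\tilde x := x_t/c,\ \tilde y := y_t/c \in [0,1]$, $a := \epsilon/(1+\epsilon)$, and $b := \epsilon/(1-\epsilon)$, the exponent is $\lambda\Delta_t = a\tilde x - b\tilde y$. Bounding each factor by the secant through its endpoints (valid since $u\mapsto e^{au}$ and $u \mapsto e^{-bu}$ are convex on $[0,1]$) gives $e^{a\tilde x}\le 1 + \tilde x(e^a-1)$ and $e^{-b\tilde y}\le 1 + \tilde y(e^{-b}-1)$; both right-hand sides are nonnegative, so their product dominates $e^{a\tilde x - b\tilde y}$. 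Expanding the product, the cross term $\tilde x\tilde y(e^a-1)(e^{-b}-1)$ is nonpositive — this is exactly where the correlation is neutralized, since $e^a-1>0$ while $e^{-b}-1<0$ — and may be discarded. Taking $\mathbb{E}[\cdot\mid\mathcal{F}_{t-1}]$ leaves the upper bound $1 + (e^a-1)\mathbb{E}[\tilde x\mid\mathcal{F}_{t-1}] + (e^{-b}-1)\mathbb{E}[\tilde y\mid\mathcal{F}_{t-1}]$.

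It then remains to use the drift hypothesis and reduce to a numerical inequality. Since $e^a - 1 > 0$, the assumption $\mathbb{E}[x_t\mid\mathcal{F}_{t-1}]\le\mathbb{E}[y_t\mid\mathcal{F}_{t-1}]$ lets me replace $\mathbb{E}[\tilde x\mid\mathcal{F}_{t-1}]$ by $\mathbb{E}[\tilde y\mid\mathcal{F}_{t-1}]$, yielding $1 + (e^a + e^{-b} - 2)\,\mathbb{E}[\tilde y\mid\mathcal{F}_{t-1}]$. As $\mathbb{E}[\tilde y\mid\mathcal{F}_{t-1}]\ge 0$, the estimate $\mathbb{E}[\exp(\lambda\Delta_t)\mid\mathcal{F}_{t-1}]\le 1$ follows once I verify the elementary inequality $e^{a} + e^{-b}\le 2$. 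I would prove this cleanly using $e^{u}\le (1-u)^{-1}$ for $u\in[0,1)$ and $e^{-u}\le (1+u)^{-1}$ for $u\ge 0$ (each obtained by checking that the relevant product decreases from $1$): with $a = \epsilon/(1+\epsilon)$ one gets $(1-a)^{-1} = 1+\epsilon$, and with $b = \epsilon/(1-\epsilon)$ one gets $(1+b)^{-1} = 1-\epsilon$, so $e^a + e^{-b}\le (1+\epsilon)+(1-\epsilon) = 2$.

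I expect the main obstacle to be precisely the correlation between $x_t$ and $y_t$ in the conditional moment-generating estimate; the secant-line linearization followed by dropping the sign-definite cross term is the key maneuver, as it avoids any need to control $\mathbb{E}[x_t y_t]$. The remaining work — verifying that $M_t$ is a supermartingale, the event inclusion, and the numerical inequality — is routine, and the strict inequality in the statement follows from the fact that $e^a + e^{-b} < 2$ strictly for $\epsilon\in(0,1)$.
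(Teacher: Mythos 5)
Your proof is correct and follows essentially the same route as the paper's: both build a nonnegative supermartingale (yours is $\exp\bigl(\tfrac{\epsilon}{c}\sum_s\Delta_s\bigr)$, the paper's is the equivalent product form $(1+\epsilon)^{\sum_s x_s/c}(1-\epsilon)^{\sum_s y_s/c}$), establish the per-step bound by secant-line linearization and discarding the sign-definite cross term that absorbs the $x_t$--$y_t$ correlation, and finish with Doob/Ville's maximal inequality. The only difference is cosmetic: your parametrization makes the event inclusion immediate at the cost of verifying $e^{\epsilon/(1+\epsilon)}+e^{-\epsilon/(1-\epsilon)}\le 2$, whereas the paper's makes the supermartingale step immediate and uses the same two elementary logarithm inequalities in the event-inclusion step instead.
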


\proof{Proof of \cref{lemma:Chernoff}.}

Let $\phi_0=1$, and for $\tau=1,2,\dots$ let $\phi_\tau = (1+\epsilon)^{\sum_{t=1}^\tau x_t/c}
(1-\epsilon)^{\sum_{t=1}^\tau y_t/c}.$ Then $\phi_0,\phi_1,\dots$ is a non-negative super-martingale. Indeed, for $\tau\geq 1$ we have $\phi_\tau/\phi_{\tau-1}
=(1+\epsilon)^{x_\tau/c}(1-\epsilon)^{y_\tau/c}
\leq(1 +\epsilon x_\tau/c) (1-\epsilon y_\tau/c)
\leq 1+\epsilon x_\tau/c - \epsilon y_\tau/c$, where the first inequality is because $x_t/c,y_t/c\in[0,1]$ for every $t$. Because $\mathbb{E}[x_ t \mid S_{t-1}]\leq\mathbb{E}[y_ t \mid S_{t-1}]$, we get $\mathbb{E}[\phi_\tau/\phi_{\tau-1}\mid S_{\tau-1}]\leq 1$, which shows $\phi_0,\phi_1,\dots$ is a non-negative super-martingale.

If the event in the statement happens at some $\tau$, then $\exp(\sum_{t=1}^\tau \epsilon x_t/c(1+\epsilon) - \sum_{t=1}^\tau \epsilon y_t/c(1-\epsilon))\geq \exp(\epsilon^2\mu)$. Using $e^{\epsilon/(1-\epsilon)}<1+\epsilon$ we get $\phi_\tau=(1+\epsilon)^{\sum_{t=1}^\tau x_t/c}
(1-\epsilon)^{\sum_{t=1}^\tau y_t/c}> \exp(\epsilon^2\mu)$. Therefore 
\begin{eqnarray*}
	\mathbb{P}\left(\exists\text{ } \tau\text{ such that }\sum_{t=1}^\tau x_t/(1+\epsilon) - \sum_{t=1}^\tau y_t/(1-\epsilon) \geq \epsilon \mu c\right)&\leq&
	\mathbb{P}\left(\exists\text{ } \tau\text{ such that }\phi_\tau >\exp(\epsilon^2\mu)\right)
	\\&<&\exp(-\epsilon^2\mu),
\end{eqnarray*} where the second inequality follows by Doob's martingale inequality.
\endproof
\vspace{1em}

To analyze the reward obtained so far by the algorithm at a certain time period, we revisit the proof of \cref{thm:stochastic} and inherit all notations are ed from the proof of \cref{thm:stochastic}. Recall in Eq. (\ref{eqn:proof of thm 1}) we have $\mathbb{E}_{\gamma\sim\mathcal{P}^T}\left[r_t\left(x_t\right) \mid \sigma\left(\xi_{t-1}\right)\right] =\mathbb{E}_{\gamma\sim\mathcal{P}^T}\left[\bar{D}\left(\mu_t \mid \mathcal{P}\right) -w_t(\mu_t)\mid \sigma\left(\xi_{t-1}\right)\right]$. For any $x_t\in\mathcal{X}_t$ and $\mu_t\in\mathbb{R}_+^m$ we have $0\leq r_t(x_t), \bar{D}\left(\mu_t \mid \mathcal{P}\right) -w_t(\mu_t)\leq\bar{r}$. Therefore, for the stopping time $\tau_A$ defined in the proof of \cref{thm:stochastic}, we can apply \cref{lemma:Chernoff} on $\bar{D}\left(\mu_t \mid \mathcal{P}\right) -w_t(\mu_t)$ and $r_t(x_t)$, which gives
\begin{eqnarray*}
	&\quad& \mathbb{P}_{\gamma\sim\mathcal{P}^T}\left(\sum_{t=1}^{\tau_A}(\bar{D}\left(\mu_t \mid \mathcal{P}\right) -w_t(\mu_t))-\sum_{t=1}^{\tau_A}r_t(x_t)\geq 2\epsilon'\bar{r}T/(1-\epsilon')+(1+\epsilon')\epsilon'\mu'\bar{r}\right)\\&\leq& \mathbb{P}_{\gamma\sim\mathcal{P}^T}\left(\sum_{t=1}^{\tau_A}(\bar{D}\left(\mu_t \mid \mathcal{P}\right) -w_t(\mu_t))-\sum_{t=1}^{\tau_A}r_t(x_t)(1+\epsilon')/(1-\epsilon')\geq (1+\epsilon')\epsilon'\mu'\bar{r}\right)
	\\&\leq& \mathbb{P}_{\gamma\sim\mathcal{P}^T}\left(\sum_{t=1}^{\tau_A}(\bar{D}\left(\mu_t \mid \mathcal{P}\right) -w_t(\mu_t))/(1+\epsilon')-\sum_{t=1}^{\tau_A}r_t(x_t)/(1-\epsilon')\geq \epsilon'\mu'\bar{r}\right)
	\\&<&\exp(-\epsilon'^2\mu'),
\end{eqnarray*}
where the first inequalities follows because $r_t(x_t)\leq\bar{r}$ and $\tau_A\leq T$, so $2\sum_{t=1}^{\tau_A}r_t(x_t)/(1-\epsilon')\leq 2\epsilon'\bar{r}T/(1-\epsilon')$; the second inequality is obtained by dividing $1+\epsilon'$ on both sides of the inequality; the third inequality utilizes \cref{lemma:Chernoff}. Plug in $\epsilon'=T^{-1/2}$ and $\mu=T\log (T)$ yields
\begin{equation}\label{eqn:bound of R}
	\mathbb{P}_{\gamma\sim\mathcal{P}^T}\left(\sum_{t=1}^{\tau_A}(\bar{D}\left(\mu_t \mid \mathcal{P}\right) -w_t(\mu_t))-\sum_{t=1}^{\tau_A}r_t(x_t)\geq  (4\bar{r}+2\bar{r}\log (T))\sqrt{T} \right)<\frac{1}{T}.
\end{equation} We will use Eq. (\ref{eqn:bound of R}) later in bounding the concentration of $R(\text{SA}\mid\gamma)$.

Then we look to bound the concentration of $\text{OPT}(\gamma)$. Because $0\leq r^*_t(\mu)\leq \bar{r}$ for every $\mu\in\mathbb{R}^m_+$, by Hoeffding's inequality we have
\begin{equation}\label{eqn:proof of thm 3,1}
	\mathbb{P}_{\gamma\sim\mathcal{P}^T}\left(\sum_{t=1}^T D_t(\mu\mid \gamma_t)-\mathbb{E}_{\gamma\sim\mathcal{P}^T}\left[\sum_{t=1}^T D_t(\mu\mid \gamma'_t)\right]>y\right)\leq \exp{\left(-\frac{2y^2}{\bar{r}^2 T}\right)}
\end{equation} 
and 
\begin{equation}\label{eqn:proof of thm 3,1.5}
	\mathbb{P}_{\gamma\sim\mathcal{P}^T}\left(\mathbb{E}_{\gamma'\sim\mathcal{P}^T}\left[\sum_{t=1}^s D_t(\mu\mid \gamma'_t)\right]-\sum_{t=1}^s D_t(\mu\mid \gamma_t)>y\right)\leq \exp{\left(-\frac{2y^2}{\bar{r}^2 s}\right)} \quad\forall s
\end{equation}
for every $\mu \in\mathbb{R}^m_+$ and $y> 0$. 
Apply Eq. (\ref{eqn:proof of thm 3,2}) and Eq. (\ref{eqn:proof of thm 3,4}) to Eq. (\ref{eqn:proof of thm 3,1}) and take $\mu$ to be the minimizer on the left hand side of Eq. (\ref{eqn:proof of thm 3,4}) gives \begin{equation}\label{eqn:proof of thm 3,2.0.5}
	\mathbb{P}_{\gamma\sim\mathcal{P}^T}\left(\text{OPT}(\gamma)-\mathbb{E}_{\gamma\sim\mathcal{P}^T}\left[\text{OPT}(\gamma)\right]>y+(m+1)\bar{r}\right)\leq \exp{\left(-\frac{2y^2}{\bar{r}^2T}\right)}.
\end{equation}
Take $y=\sqrt{\bar{r}^2T\log(T)/2}$ yields
\begin{equation}\label{eqn:proof of thm 3,2.1}
	\mathbb{P}_{\gamma\sim\mathcal{P}^T}\left(\text{OPT}(\gamma)-\mathbb{E}_{\gamma\sim\mathcal{P}^T}\left[\text{OPT}(\gamma)\right]>\sqrt{\bar{r}^2T\log(T)/2}+(m+1)\bar{r}\right)\leq \frac{1}{T}.
\end{equation}

Recall in the steps of Eq. (\ref{eqn:clubsuit}), we have $R(\text{SA}\mid\gamma)\geq \sum_{t=1}^{\tau_A}r_t(x_t)$ and $\bar{D}\left(\mu_t \mid \mathcal{P}\right)\geq\tau_A\bar{D}\left(\bar{\mu}_{\tau_A} \mid \mathcal{P}\right)$. Combine Eq. (\ref{eqn:bound of R}) and Eq. (\ref{eqn:proof of thm 3,2.1}) gives that, for every $z>0$,
\begin{eqnarray*}
	&\quad&\mathbb{P}_{\gamma\sim\mathcal{P}^T}\left(\text{OPT}(\gamma)-R(\text{SA}\mid\gamma)\geq z+(4\bar{r}+2\bar{r}\log (T))\sqrt{T}+\sqrt{\bar{r}^2T\log(T)/2}+(m+1)\bar{r}\right)\\
	&\overset{(a)}{\leq}&\mathbb{P}_{\gamma\sim\mathcal{P}^T}\left(\mathbb{E}_{\gamma\sim\mathcal{P}^T}\left[\text{OPT}(\gamma)\right]-R(\text{SA}\mid\gamma)\geq z+(4\bar{r}+2\bar{r}\log (T))\sqrt{T}\right)+\frac{1}{T}\\
	&\overset{(b)}{\leq}&\mathbb{P}_{\gamma\sim\mathcal{P}^T}\left(\mathbb{E}_{\gamma\sim\mathcal{P}^T}\left[\text{OPT}(\gamma)\right]-\sum_{t=1}^{\tau_A}r_t(x_t)\geq z+(4\bar{r}+2\bar{r}\log (T))\sqrt{T}\right)+\frac{1}{T}\\
	&\overset{(c)}{\leq}&\mathbb{P}_{\gamma\sim\mathcal{P}^T}\left(\mathbb{E}_{\gamma\sim\mathcal{P}^T}\left[\text{OPT}(\gamma)\right]-\sum_{t=1}^{\tau_A}(\bar{D}\left(\mu_t \mid \mathcal{P}\right) -w_t(\mu_t))\geq z\right)+\frac{2}{T}\\
	&\overset{(d)}{\leq}&\mathbb{P}_{\gamma\sim\mathcal{P}^T}\left(\mathbb{E}_{\gamma\sim\mathcal{P}^T}\left[\text{OPT}(\gamma)\right]-\tau_A\bar{D}\left(\bar{\mu}_{\tau_A}\mid \mathcal{P}\right)+\sum_{t=1}^{\tau_A}w_t(\mu_t)\geq z\right)+\frac{2}{T}\\
	&\overset{(e)}{\leq}&\mathbb{P}_{\gamma\sim\mathcal{P}^T}\left(\left(T-\tau_A\right) \cdot \bar{r}+\sum_{t=1}^{\tau_A} w_t(\mu^*)+CT^{\frac{1}{2}-a}\cdot\textup{polylog}(T)\geq z\right)+\frac{2}{T}\\
	&\overset{(f)}{\leq}&\mathbb{P}_{\gamma\sim\mathcal{P}^T}\left(\frac{\bar{r}\bar{g}}{\underline{\rho}}+CT^{\frac{1}{2}-a}\cdot\textup{polylog}(T)\geq z\right)+\frac{3}{T}.
\end{eqnarray*}
Here $(a)$ follows by Eq. (\ref{eqn:proof of thm 3,2.1}); $(b)$ is because $R(\text{SA}\mid\gamma)\geq \sum_{t=1}^{\tau_A}r_t(x_t)$; $(c)$ follows by Eq. (\ref{eqn:bound of R}); $(d)$ is because $\bar{D}\left(\mu_t \mid \mathcal{P}\right)\geq\tau_A\bar{D}\left(\bar{\mu}_{\tau_A} \mid \mathcal{P}\right)$; $(e)$ holds since the last three steps of Eq. (\ref{eqn:clubsuit}) is deterministic in nature; $(f)$ follows from the last paragraph of the proof of \cref{thm:stochastic}. Take $z=\frac{\bar{r}\bar{g}}{\underline{\rho}}$ and note that $z+(4\bar{r}+2\bar{r}\log (T))\sqrt{T}+\sqrt{\bar{r}^2T\log(T)/2}+(m+1)\bar{r}\in O(\log(T)\sqrt{T})$, i.e., there exists a constant $C'>0$ such that $z+(4\bar{r}+2\bar{r}\log (T))\sqrt{T}+\sqrt{\bar{r}^2T\log(T)/2}+(m+1)\bar{r}<C'\log(T)\sqrt{T}$. This gives 
\begin{equation}\label{eqn:chernoff on SA}
	\mathbb{P}_{\gamma\sim\mathcal{P}^T}\left(\text{OPT}(\gamma)-R(\text{SA}\mid\gamma)>C'\log(T)\sqrt{T}\right)\leq\frac{3}{T}.
\end{equation}

Suppose the algorithm does not switch to the Adversarial Arrival Algorithm before time period $k'\delta T$ for some $k'\in\{0,\dots,1/\delta-1\}$. For $k=0,1,\dots,k'-1$, the algorithm performs the Stochastic Arrival Algorithm during each time block between time periods $k\delta T+1$ and $(k+1)\delta T$.
Apply Eq. (\ref{eqn:proof of thm 3,3}) to Eq. (\ref{eqn:proof of thm 3,1.5}) over each time block gives that, for every $y>0$,
\begin{equation}\label{eqn:proof of thm 3,2.2}
	\mathbb{P}_{\gamma\sim\mathcal{P}^T}\left(\mathbb{E}_{\gamma'\sim\mathcal{P}^T}\left[\sum_{t=k\delta T+1}^{(k+1)\delta T} D_t(\mu\mid \gamma'_t)\right]-\text{OPT}(\gamma_{k\delta T+1:(k+1)\delta T})>y+(m+1)\bar{r}\right)\leq \exp{\left(-\frac{2y^2}{\bar{r}^2 \delta T}\right)}.
\end{equation}
Let $X_k$ be the random variable such that $$X_k=\mathbb{E}_{\gamma'\sim\mathcal{P}^T}\left[\sum_{t=k\delta T+1}^{(k+1)\delta T} D_t(\mu\mid \gamma'_t)\right]-\text{OPT}(\gamma_{k\delta T+1:(k+1)\delta T})-(m+1)\bar{r}$$ where $\gamma_{k\delta T+1:(k+1)\delta T}\sim\mathcal{P}^{\delta T}$. Then by Eq. (\ref{eqn:proof of thm 3,2.2}) each $X_k$ is an independent sub-Gaussian random variable with parameter $\sqrt{2/\bar{r}^2\delta T}$. Therefore $\sum_{k=0}^{k'-1} X_k$ is also a sub-Gaussian random variable with parameter at most $\sqrt{2/\bar{r}^2\delta T}$. Hence we get
\begin{eqnarray}\label{eqn:proof of thm 3,2.3}
	&\qquad &\mathbb{P}_{\gamma\sim\mathcal{P}^T}\left(\mathbb{E}_{\gamma'\sim\mathcal{P}^T}\left[\sum_{t=1}^{k'\delta T} D_t(\mu\mid \gamma'_t)\right]-\sum_{k=0}^{k'-1}\text{OPT}(\gamma_{k\delta T+1:(k+1)\delta T})>y+(m+1)\bar{r}/\delta\right)\nonumber\\
	&=&\mathbb{P}_{\gamma\sim\mathcal{P}^T}\left(\sum_{k=0}^{k'-1} X_k>y\right)\nonumber\\
	&\leq& \exp{\left(-\frac{2y^2}{\bar{r}^2\delta^3 T}\right)}.
\end{eqnarray}
Note that $\mathbb{E}_{\gamma'\sim\mathcal{P}^T}\left[\sum_{t=1}^{k'\delta T} D_t(\mu\mid \gamma'_t)\right]/k'\delta=\mathbb{E}_{\gamma'\sim\mathcal{P}^T}\left[\sum_{t=1}^T D_t(\mu\mid \gamma'_t)\right]$, so combining Eq. (\ref{eqn:proof of thm 3,2.0.5}) from time period $t=1$ to time period $t=k'\delta T$ and Eq. (\ref{eqn:proof of thm 3,2.3}) and using union bound we get 
\begin{eqnarray*}
	\mathbb{P}_{\gamma\sim\mathcal{P}^T}\left(\text{OPT}(\gamma_{1:k'\delta T})-\sum_{k=0}^{k'-1}\text{OPT}(\gamma_{k\delta T+1:(k+1)\delta T})>2y+(m+1)\bar{r}/\delta\right) \le& \exp{\left(-\frac{2y^2}{\bar{r}^2\delta^3 T}\right)}.
\end{eqnarray*}
Take $y=\sqrt{\bar{r}^2\delta^3 T\log(T)/2}$ yields

\begin{equation}
	\label{eqn:proof of thm 3,5}\mathbb{P}_{\gamma\sim\mathcal{P}^T}\left(\text{OPT}(\gamma_{1:k'\delta T})-\sum_{k=0}^{k'-1}\text{OPT}(\gamma_{k\delta T+1:(k+1)\delta T})>\sqrt{2\bar{r}^2\delta^3 T\log(T)}+(m+1)\bar{r}/\delta\right)\leq \frac{1}{T}.
\end{equation}

For $k=0,1,\dots,k'-1$, let $R(\text{SA}\mid \gamma_{k\delta T+1:(k+1)\delta T})$ denote the reward obtained by the Stochastic Arrival Algorithm during each time block between time periods $k\delta T+1$ and $(k+1)\delta T$, then $R_{k'\delta T+1}=\sum_{k=0}^{k'-1}R(\text{SA}\mid \gamma_{k\delta T+1:(k+1)\delta T})$, where $R_t$ is the total amount of reward obtained between time periods $1$ and $t-1$ as defined in the algorithm. Apply Eq. (\ref{eqn:chernoff on SA}) on each time block 
shows that for each $k$ we have \begin{equation*}
	\mathbb{P}_{\gamma\sim\mathcal{P}^T}\left(\text{OPT}(\gamma_{k\delta T+1:(k+1)\delta T})-R_{\delta T+1}(\text{SA}\mid \gamma_{k\delta T+1},\dots, \gamma_{(k+1)\delta T})>C'\log(T)\sqrt{T}\right)\leq\frac{3}{T},
\end{equation*}
and therefore \begin{equation}\label{eqn:proof of thm 3,6}
	\mathbb{P}_{\gamma\sim\mathcal{P}^T}\left(\sum_{k=0}^{k'-1}\text{OPT}(\gamma_{k\delta T+1:(k+1)\delta T})-R_{k'\delta T+1}>C'\log(T)\sqrt{T}\right)\leq\frac{3k'}{T}\leq\frac{3}{\delta T}.
\end{equation}  

Let $L$ be a constant such that \begin{equation}\label{eqn:thm 3 constant}
	L\log(T)\sqrt{T}>C'\log(T)\sqrt{T}+\sqrt{2\bar{r}^2\delta^3 T\log(T)}+(m+1)\bar{r}/\delta.
\end{equation} Combine Eq. (\ref{eqn:proof of thm 3,5}) and Eq. (\ref{eqn:proof of thm 3,6}) gives
\begin{equation*}
	\mathbb{P}_{\gamma\sim\mathcal{P}^T}\left(\text{OPT}(\gamma_{1:k'\delta T})-R_{k'\delta T+1}>L\log(T)\sqrt{T}\right)\leq \frac{1}{T}+\frac{3}{\delta T}=\frac{3+\delta}{\delta T}.
\end{equation*}
Therefore
\begin{eqnarray*}
	&\quad&\mathbb{P}(\text{the algorithm switches to the Adversarial Arrival Algorithm incorrectly})\\
	&=&\sum_{k'=0}^{1/\delta -1}\mathbb{P}(\text{the algorithm switches at time period $k'\delta T+1$ incorrectly})\\
	&\leq&\sum_{k'=0}^{1/\delta -1}\mathbb{P}_{\gamma\sim\mathcal{P}^T}\left(\text{OPT}(\gamma_{1:k'\delta T})-R_{k'\delta T+1}>L\log(T)\sqrt{T}\right)\\
	&\leq&\frac{3+\delta}{\delta^2 T}.
\end{eqnarray*}

\noindent \textbf{Case 3:}

Suppose the underlying arrival model is adversarial and the algorithm switches to the Adversarial Arrival Algorithm at time period $k'\delta T+1$ for some $k'\in\{0,1, \dots,1/\delta-1,1/\delta\}$. Here, to simplify the notation, we set $k'=1/\delta$ if the algorithm never switches to the Adversarial Arrival Algorithm. For time periods $t_1,t_2$, let $R(\textup{MainALG}\mid \gamma)[t_1,t_2]$ be the amount of rewards that the algorithm obtained between time periods $t_1$ and $t_2$.



Because the algorithm does not switch at time period $(k'-1)\delta T+1$, we have \begin{eqnarray}\label{eqn:main case 3, 1}
	&\quad& R(\textup{MainALG}\mid \gamma)[1,(k'-1)\delta T]+L\log(T)\sqrt{T}\nonumber \\&\geq& \text{OPT}(\gamma_{1:(k'-1)\delta T})\nonumber\\&\geq& \max\left\{\frac{1}{\alpha^*}\text{OPT}(\gamma_{1:(k'-1)\delta T}),\textup{PRD}(\gamma_{1:(k'-1)\delta T})\right\}\nonumber\\
	&\geq &\max\left\{\frac{1}{\alpha^*}\text{OPT}(\gamma_{1,k'\delta T}),\textup{PRD}(\gamma_{1:(k'-1)\delta T})\right\}-\delta\bar{r} T,
\end{eqnarray} where the last inequality follows since the total rewards obtained in $\delta T$ time periods is upper bounded by $\delta\bar{r} T$.

Because the algorithm releases the remaining $\rho (T-k'\delta T)$ amount of resources for the remaining $T-k'\delta T$ time periods and performs the Adversarial Arrival Algorithm, by Theorem \ref{thm:adversarial} \begin{equation}\label{eqn:main case 3, 2}
	\max\left\{\frac{1}{\alpha^*}\text{OPT}(\gamma_{k'\delta T+1,T}),\textup{PRD}(\gamma_{k'\delta T+1,T})\right\} - R(\textup{MainALG}\mid \gamma)[k'\delta T+1,T]= o(T).
\end{equation}
Combining Eq. (\ref{eqn:main case 3, 1}) and Eq. (\ref{eqn:main case 3, 2}) gives
\begin{eqnarray*}
	&\quad& R(\textup{MainALG}\mid \gamma)+\delta\bar{r} T\\
	&=& R(\textup{MainALG}\mid \gamma)[1,(k'-1)\delta T] +R(\textup{MainALG}\mid \gamma)[k'\delta T+1,T]\\
	&\geq &\max\left\{\frac{1}{\alpha^*}\text{OPT}(\gamma_{1:k'\delta T}),\textup{PRD}(\gamma_{1:k'\delta T})\right\}-L\log(T)\sqrt{T}\\&\quad&+\max\left\{\frac{1}{\alpha^*}\text{OPT}(\gamma_{k'\delta T+1:T}),\textup{PRD}(\gamma_{k'\delta T+1:T})\right\}-o(T)\\
	&\geq &(1-\lambda)\max\left\{\frac{1}{\alpha^*}\textup{OPT}(\gamma),\textup{PRD}( \gamma)\right\}-o(T),
\end{eqnarray*}
where the last inequality follows since $\gamma$ is $(\lambda,\delta)$-stationary (\cref{def:stationary} and \cref{obs:stationary}). Hence $\limsup_{T\to\infty}\sup_{\gamma\in\mathcal{S}^T}\left\{\frac{1}{T}\left((1-\lambda)\max\left\{\frac{1}{\alpha^*}\textup{OPT}(\gamma),\textup{PRD}( \gamma)\right\} - R(\textup{MainALG}\mid \gamma)\right)\right\}\leq \delta\bar{r}.$

\noindent \textbf{Putting it all together.}

If the underlying arrival model is stochastic, combining case 1 and case 2 gives
\begin{eqnarray*}
	&&\textup{Regret}(\textup{MainALG})\\&=&\quad\mathbb{E}_{\gamma\sim\mathcal{P}^T}\left[\text{OPT}(\gamma)-R(\textup{MainALG}\mid \gamma)\mid \text{never switches}\right]\mathbb{P}(\text{never switches})\\
	&\quad&\text{}+ \mathbb{E}_{\gamma\sim\mathcal{P}^T}\left[\text{OPT}(\gamma)-R(\textup{MainALG}\mid \gamma)\mid \text{switches}\right]\mathbb{P}(\text{switches}).
\end{eqnarray*}
By case 1, $\mathbb{E}_{\gamma\sim\mathcal{P}^T}\left[\text{OPT}(\gamma)-R(\textup{MainALG}\mid \gamma)\mid \text{never switches}\right]\in\tilde{O}(\max\{T^{\frac{1}{2}-a},1\})$. By case 2, $\mathbb{P}(\text{switches})\leq\frac{3+\delta}{\delta T}$. Since $\text{OPT}(\gamma)\in O(T)$, we have $$\textup{Regret}(\textup{MainALG})= \tilde{O}(\max\{T^{\frac{1}{2}-a},1\}).$$

If the underlying arrival model is adversarial, case 3 shows $$\limsup_{T\to\infty}\sup_{\gamma\in\mathcal{S}^T}\left\{\frac{1}{T}\left( (1-\lambda)\max\left\{\frac{1}{\alpha^*}\textup{OPT}(\gamma),\textup{PRD}( \gamma)\right\} - R(\textup{MainALG}\mid \gamma)\right)\right\}\leq \delta\bar{r}.$$ This completes the proof.
\endproof

\section{Experiment Details}\label{Appendix Experiment}
\subsection{Synthetic Experiment}

The detailed setups were the following. There were 25 products, where each product was randomly assigned a unique integer price in the range of $[1, 25]$ and an embedding that lied randomly in $\mathbb{S}^{4}$. There were $26$  types of customers, consisting of $25$ customers that each corresponded to exactly one unique product, and one no-customer type, corresponding to no product being selected in that time interval. For each customer type $i$ (apart from the no-customer type), the probability that it would buy product $j$ if recommended was $\text{sigmoid}(e_i^\top \cdot e_j)/10$, where $e_i$ and $e_j$ were the $d$-dimensional embeddings for products $i$ and $j$, respectively. For the no-customer type, the probabilities were zero - we could not recommend anything. 

One instance contained $T=1000$ time periods. To build the arrival sequence, we had a function $N$ that maps each time period $t$ to a probability of observing a no-customer type at that time. If a customer did arrive, we chose its type uniformly at random. The initial inventory level was controlled by $\rho$. Modeling inventory shortages or excess inventory can be done by changing $\rho$. The price of each product was fixed at the start of the experiment and is held constant. To generate predictions, we first calculated at the true item counts in the demand sequence for each product. Depending on the arrival model, we applied various amounts of zero-mean Guassian noise with variance $\sigma$ to each of the true item counts. We then took these counts, compared them to our inventory, and determined the predicted shadow prices for each product. By changing $\sigma$, we were able to simulate predictions of different qualities.

The synthetic experiment was run on a MacBook Pro equipped with Apple's M2 Chip. The total compute time was under 20 hours. All offine optimization problems in the algorithms were solved by Gurobi.

We list all the (hyper)parameters used:
\begin{itemize}
	\item Low inventory level: $\rho = .015$, medium inventory level: $\rho = .03$, and high inventory level: $\rho = .06$;
	\item Root finding bisection parameters: $\alpha = 10^6$, $\beta = 0$, $lo = 10^{-4}$, $hi = 1$;
	\item Perfect predictions: $\sigma = 0$, good predictions: $\sigma = 5$, and bad predictions $\sigma = 500$;
	\item Stochastic arrivals: $N(t)=0.7$, nonstationary arrivals: $N(t) = .4+\frac{3t}{5000}$, and adversarial arrivals: $N(t) = \mathbbm{1}(t > 300)$;
	\item Parameters for the Main Algorithm (Algorithm $\ref{alg:main}$): $\delta = \frac{1}{20}$ and $L = 7$.
\end{itemize}

\subsection{H\&M Experiment}
\subsubsection{Background}
The H\&M dataset contains two years of online purchase data from H\&M customers, consisting of dates, purchase prices, customer IDs, and product ID. For each product, there are basic categorical information about its type, appearance, and department. For computational reasons, we only considered the 5000 most purchased products during this experiment. Our goal was to simulate 90 days of the online marketplace where when a customer selects a product, we recommend three other products in return. Encoding the days using a start day $s$, we started by building a sequence of customer/no-customer arrivals for the 90 day window: Let $R := \max_{0 \leq j \leq 89}\{\text{Amount of customers in day }s+j\}$. We initialized an empty array of size $R\cdot 90$. For a day $s+j$, for every product that was purchased in that day, we randomly placed this product in the array between indices $jR$ and $(j+1)R-1$. We call this sequence of customer/no-customer interactions our demand sequence. Note that each entry in the tuple contained the product and the price for which it was purchased.

A product’s price on a given day was set to be the price of that product purchased by some customer on a given day. To ensure that this process was deterministic, as there could be multiple customers purchasing the same product for different prices, we defined the product’s price on that day to be the first time that product was purchased by a customer on that day. If no customer purchased that product, we made the assumption that the product was unavailable and took this under consideration when recommending products during the experiment, as we could not recommend a product that is not available. In order to facilitate this experiment, we buitd an accurate model which took in two products, along with their prices, and determined the probability that those two products were bought together. We did this using sklearn’s Random Forest model. First, we created a 50-dimensional embedding for each product. This was done by creating a matrix where each $(i,j)$ entry represented that the $i$-th product was bought by the $j$-th customer. Using a matrix factorization collaborative filtering algorithm, we were able to obtain a 50-dimensional embedding for each product. Next, for each product, we created a one-hot vector for ``product\_group\_name", ``graphical\_appearance\_no", ``perceived\_colour\_value\_id", ``perceived\_colour\_master\_id", ``index\_code", ``index\_group\_no", and ``garment\_group\_no", and concatenated these one-hot vectors to form a vector of length $102$ that contains exactly 7 ones. Given two products, $p_1$ and $p_2$, we created the final 207-dimensional vector we fed into the Random Forest model by concatenating $p_1$ and $p_2$'s one-hot vectors, adding in the dot product similarity metric between the $p_1$ and $p_2$'s embeddings, and finally adding the prices for both items on that specific day. To train this model, we generated 100,000 positive instances, meaning a customer bought products $p_1$ and $p_2$ together on the same day, and 1,000,000 negative instances, where we randomly selected a product $p_1$ purchased by customer $u$ and find a product $p_2$ that was available on that day but not bought by $u$. The trained model had an AUC of $0.78$. For any two products, $p_1$ and $p_2$, that also contained correct price information for that day, we referred to this probability function as $f_{prob}(p_1, p_2)$, giving us the probability that items $p_1$ and $p_2$ were bought together on that specific day. 

In executing the Main Algorithms, we modelled the random nature of recommending products to customers, that is, we did not know whether or not a customer would select the products we recommended. To remedy this, we performed the following procedure to closely model real-world customer decision making. At each time step, we either saw a no-customer, which we would recommend no products, or we observed a product that the customer selected, say $p_{customer}$. Then the value of recommending some product $p_{rec}$ was given by $f_{prob}(p_{customer}, p_{rec})\cdot(r_t(p_{rec})-\mu_{rec} g_t(p_{rec}))$, where $r_t(p_{rec})$ represented the current price of the recommended product, $g_t(p_{rec})$ was treated as being $1$, since the customer would only consume one unit of the recommended product, and $\mu_{rec}$ was the current shadow price of the recommended item as predicted by the dual variable at time period $t$. We then recommended the top three products according to this above metric that also satisfied the inventory constraint. Note that if no three products existed to recommend, then we recommended no products. Once we recommended three products to the customer, the customer would pick each product with probability $f_{prob}(p_{customer}, p_{rec})$ and we in turn received the product's value along with the decrease in inventory only if the customer ended up buying the product. The customer could select anywhere from none to all of the recommended products, and the selections were assumed to be independent of each other.

To generate the prediction for each instance, we used $365$ days of data before the starting day of our testing window. For every $5$ day span, we added up all the products that were purchased within that interval. We took this and converted it into counts for the embedding vectors. This gave $50$ streams of $73$ data points each (one stream per embedding dimension and one data point for each of the $365/5$ combined points). From here, we run our prediction algorithm (FB Prophet, ARIMA, and Exponential Smoothing) to generate 18 more data points (as $5\times 18$ gives the full 90 days) for each of the $50$ streams, converted these back into counts for the products themselves, and determined the shadow price for each product using these predicted demands for each product. The vector of shadow prices becomes our prediction.

When running the Main Algorithm (Algorithm \ref{alg:main}), we performed sequential hypothesis testing to determine whether or not the arrival sequence was stochastic or not. We began by assuming the arrival sequence was stochastic. Then we performed the following offline hypothesis test: after allowing for a burn-in period of $20$ days, for every $t \in \{25, 30, \dots, 85\}$ we performed a one-sided one sample t-test on the number of arrivals in $[t-4, t]$ compared to the average number of arrivals in $[0, t-5]$. For sufficiently low $p$-value, chosen to be $.05$, the algorithm switched to be adversarial. Additionally, due to the large amounts of data used, using the bisection algorithm as written in the Stochastic Arrival Algorithm 
(Algorithm \ref{alg:stochastic}) was too computationally inefficient, so instead we used an approximation of this by selecting a set of $\eta$'s, $H$, and computing
\[
\eta_t := \argmin_{\eta \in H} \left\lvert \eta - \frac{\theta_t(\mu_1,\eta)}{\sqrt{\alpha\Phi_t(\mu_1,\eta)+\beta}}\right\rvert
\]
This method allowed quicker computation, as we were only running a constant with respect to $T$ versions of the Mirror Descent Algorithm for each instance. The larger our set $H$ was, the closer we would get to the solution outputted by the root finding bisection algorithm. 

We list all the (hyper)parameters used:
\begin{itemize}
	\item Prophet and Exponential Smoothing: default;
	\item ARIMA parameters: $p = 5$, $q = 2$, $d = 1$; 
	\item Random Forest classifier: $n\_$estimators = $100$, max\_depth = $18$;
	\item $H = \{10^{-10(1 - i/30)} \mid i \in [30]\}$;
	\item Stochastic Arrival Algorithm parameters: $\alpha = 1$, $\beta = 0$.
\end{itemize}
	
\end{document}